\documentclass[letterpaper,10pt]{amsart}

\usepackage[all]{xy}                      %
  
\CompileMatrices                            

\UseTips                                    

\input xypic
\usepackage[bookmarks=true]{hyperref}       

\usepackage{amssymb,latexsym,amsmath,amscd}
\usepackage{xspace}
\usepackage{color}
\usepackage{kpfonts}
\usepackage{graphicx}
\usepackage{dsfont}

\reversemarginpar

\vfuzz2pt 
\hfuzz2pt 


\theoremstyle{plain}
\newtheorem{theorem}{Theorem}[section]
\newtheorem*{theorem*}{Theorem}
\newtheorem{proposition}[theorem]{Proposition}
\newtheorem{corollary}[theorem]{Corollary}
\newtheorem{lemma}[theorem]{Lemma}

\newtheorem{assumption}[theorem]{Assumption}

\theoremstyle{definition}
\newtheorem{definition}[theorem]{Definition}

\newtheorem{remark}[theorem]{Remark}
\newtheorem{example}[theorem]{Example}

\newcommand{\enm}[1]{\ensuremath{#1}}          %
\newcommand{\op}[1]{\operatorname{#1}}
\newcommand{\cal}[1]{\mathcal{#1}}

\newcommand{\CC}{\enm{\mathbb{C}}}
\newcommand{\EE}{\enm{\mathbb{E}}}

\newcommand{\ZZ}{\enm{\mathbb{Z}}}

\newcommand{\PP}{\enm{\mathbb{P}}}

\newcommand{\MM}{\enm{\mathbb{M}}}
\newcommand{\UU}{\enm{\mathbb{U}}}

\newcommand{\WW}{\enm{\mathbb{W}}}

\newcommand{\Aa}{\enm{\cal{A}}}
\newcommand{\Bb}{\enm{\cal{B}}}
\newcommand{\Cc}{\enm{\cal{C}}}
\newcommand{\Dd}{\enm{\cal{D}}}
\newcommand{\Ee}{\enm{\cal{E}}}
\newcommand{\Ff}{\enm{\cal{F}}}
\newcommand{\Gg}{\enm{\cal{G}}}
\newcommand{\Hh}{\enm{\cal{H}}}
\newcommand{\Ii}{\enm{\cal{I}}}

\newcommand{\Kk}{\enm{\cal{K}}}
\newcommand{\Ll}{\enm{\cal{L}}}
\newcommand{\Mm}{\enm{\cal{M}}}
\newcommand{\Nn}{\enm{\cal{N}}}
\newcommand{\Oo}{\enm{\cal{O}}}

\newcommand{\Rr}{\enm{\cal{R}}}
\newcommand{\Ss}{\enm{\cal{S}}}
\newcommand{\Tt}{\enm{\cal{T}}}

\renewcommand{\phi}{\varphi}
\renewcommand{\theta}{\vartheta}
\renewcommand{\epsilon}{\varepsilon}


\newcommand{\Hom}{\op{Hom}}
\newcommand{\Ext}{\op{Ext}}

\newcommand{\End}{\op{End}}


      %

\renewcommand{\to}[1][]{\xrightarrow{\ #1\ }}







\newcommand{\old}[1]{}


\begin{document}

\title[Logarithmic co-Higgs structure]{Existence of nontrivial logarithmic co-Higgs structure on curves}

\author{Edoardo Ballico and Sukmoon Huh}

\address{Universit\`a di Trento, 38123 Povo (TN), Italy}
\email{edoardo.ballico@unitn.it}

\address{Sungkyunkwan University, Suwon 440-746, Korea}
\email{sukmoonh@skku.edu}

\keywords{co-Higgs sheaf, logarithmic tangent bundle, nilpotent, Segre invariant}
\thanks{The first author is partially supported by MIUR and GNSAGA of INDAM (Italy). The second author is supported by Basic Science Research Program 2015-037157 through NRF funded by MEST and the National Research Foundation of Korea(KRF) 2016R1A5A1008055 grant funded by the Korea government(MSIP)}

\subjclass[2010]{Primary: {14H60}; Secondary: {14D20, 14J60, 53D18}}

\begin{abstract}
We study various aspects on nontrivial logarithmic co-Higgs structure associated to unstable bundles on algebraic curves. We check several criteria for (non-)existence of nontrivial logarithmic co-Higgs structures and describe their parameter spaces. We also investigate the Segre invariants of these structures and see their non-simplicity. In the end we also study the higher dimensional case, specially when the tangent bundle is not semistable. 
\end{abstract}

\maketitle

\tableofcontents

\section{Introduction}
A logarithmic co-Higgs sheaf on a complex manifold $X$ is a pair $(\Ee, \Phi)$ with a torsion-free coherent sheaf $\Ee$ on $X$ and a morphism $\Phi : \Ee \rightarrow \Ee \otimes \Tt_{\Dd}$ satisfying the integrability condition $\Phi \wedge \Phi=0$, where $\Tt_{\Dd}$ is the logarithmic tangent bundle $X$ associated to an arrangement $\Dd$ of hypersurfaces with simple normal crossings. When $\Dd$ is empty, it is a co-Higgs sheaf in the usual sense, introduced and developed by Hitchin and Gualtieri; see \cite{Hit, Gual}. When $\Ee$ is locally free, it is a generalized vector bundle on $X$ considered as a generalized complex manifold, whose co-Higgs field vanishes in the normal direction to the support of $\Dd$. 

It is observed in \cite[Theorem 1.1]{BH} that the semistability of a co-Higgs bundle $(\Ee, \Phi)$ on $X$ with nonnegative Kodaira dimension implies the semistability of $\Ee$. In case of negative Kodaira dimension, there are several works on description of moduli space of semistable co-Higgs bundles, including the case when the associated bundle is not stable; see \cite{Rayan} and \cite{VC1}. 

Now the additional condition for a co-Higgs field to vanish in the normal direction to $\Dd$ with higher degree, forces the associated bundle to be unstable. So we are mainly interested in the logarithmic co-Higgs sheaves associated to the arrangement with high degree and assume that the length of Harder-Narasimhan filtration is at least two. We fix numeric data for the Harder-Narasimhan filtration of the sheaf in consideration, i.e. fix the length $s$ at least two of the filtration together with rank $r_i$ and degree $d_i$ of the successive quotients in the Harder-Narasimhan filtration (\ref{hn}). Setting $\gamma:=\deg \Tt_{\Dd}$ and $\mu_i:=d_i/r_i$, we always assume that $\mu_s-\mu_1 \le \gamma<0$ as the least requirement for the existence of the non-trivial co-Higgs field; see Corollary \ref{aaa2}. Then we investigate the numeric criterion for the sheaf to admit a non-trivial co-Higgs field; see Proposition \ref{e2} and Theorem \ref{g4}. 
\begin{theorem}
Fix the numeric data for the Harder-Narasimhan filtration and denote by $\UU$ the set of the torsion-free sheaves on an algebraic curve $X$ with these data. Then the following hold:
\begin{itemize}
\item [(i)] there exists an unstable sheaf in $\UU$ with non-trivial co-Higgs field;
\item [(ii)] the inequality $\mu_s-\mu_1 \ge \gamma+1-g$ implies the existence of an unstable sheaf in $\UU$ with no non-trivial co-Higgs field;
\item [(iii)] the inequality $\mu_s-\mu_1 < \gamma+1-g$ implies that every sheaf in $\UU$ admits a non-trivial co-Higgs field. 
\end{itemize}
\end{theorem}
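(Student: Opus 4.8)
The plan is to reduce everything to the (non-)vanishing of $H^0$ of the graded pieces of the Harder--Narasimhan filtration. Since $X$ is a smooth curve, a torsion-free sheaf is locally free, $\Tt_\Dd$ is a line bundle of degree $\gamma<0$, and the integrability $\Phi\wedge\Phi=0$ is automatic; moreover $H^0(\Tt_\Dd)=0$, so a non-trivial co-Higgs field is simply a nonzero element of $\Hom(\Ee,\Ee\otimes\Tt_\Dd)=H^0(\cal{E}nd(\Ee)\otimes\Tt_\Dd)$. Writing $0=E_0\subset E_1\subset\cdots\subset E_s=\Ee$ for the filtration (\ref{hn}) with semistable quotients $G_i:=E_i/E_{i-1}$ of slope $\mu_i$ and rank $r_i$, the bundle $\cal{E}nd(\Ee)\otimes\Tt_\Dd$ carries an induced filtration whose graded pieces are $G_i^\vee\otimes G_j\otimes\Tt_\Dd$, each semistable of slope $\mu_j-\mu_i+\gamma$ and with $\chi(G_i^\vee\otimes G_j\otimes\Tt_\Dd)=r_ir_j(\mu_j-\mu_i+\gamma+1-g)$. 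For $i\le j$ this slope is $\le\gamma<0$, so $H^0=0$ automatically; only the pieces with $i>j$ can contribute, and among these $(i,j)=(s,1)$ is extremal because $\mu_i-\mu_j\ge\mu_s-\mu_1$.

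For (iii), the hypothesis $\mu_s-\mu_1<\gamma+1-g$ is exactly $\mu_1-\mu_s+\gamma+1-g>0$, i.e.\ $\chi(G_s^\vee\otimes G_1\otimes\Tt_\Dd)>0$, so $H^0(G_s^\vee\otimes G_1\otimes\Tt_\Dd)\ne0$ for every $\Ee\in\UU$ and there is a nonzero $\psi\colon G_s\rightarrow G_1\otimes\Tt_\Dd$. Composing the canonical surjection $\Ee\twoheadrightarrow G_s$, the map $\psi$, and the inclusion $G_1\otimes\Tt_\Dd\hookrightarrow\Ee\otimes\Tt_\Dd$ produces a nonzero field $\Phi$; since $\Phi$ kills $E_{s-1}\supseteq E_1$ and lands in $E_1\otimes\Tt_\Dd$, it is nilpotent. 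This gives (iii) for every sheaf in $\UU$.

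For (ii), when $\mu_s-\mu_1\ge\gamma+1-g$ we get $\mu_i-\mu_j\ge\mu_s-\mu_1\ge\gamma+1-g$ for all $i>j$, hence $\chi(G_i^\vee\otimes G_j\otimes\Tt_\Dd)\le0$ for every pair. The strategy is then to choose the factors $G_1,\dots,G_s$ to be sufficiently general semistable bundles so that each graded piece attains its expected cohomology $h^0=\max(0,\chi)=0$; taking $\Ee=\bigoplus_iG_i$ (whose Harder--Narasimhan filtration is the evident one because $\mu_1>\cdots>\mu_s$), the long exact sequences in cohomology force $H^0(\cal{E}nd(\Ee)\otimes\Tt_\Dd)=0$, so $\Ee$ admits no non-trivial field. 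The main obstacle is precisely this simultaneous vanishing: tensor products of general bundles need not be general, so I must invoke (or prove) a natural-cohomology statement guaranteeing that for general semistable $G_i,G_j$ one has $H^0(G_i^\vee\otimes G_j\otimes\Tt_\Dd)=0$ whenever $\chi\le0$, and that these finitely many general conditions can be imposed at once.

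For (i), only the standing assumption $\mu_s-\mu_1\le\gamma$, i.e.\ $\mu_s\le\mu_1+\gamma=\mu(G_1\otimes\Tt_\Dd)$, is available, so in the range where the Euler characteristic is $\le0$ one must instead exhibit special factors with a nonzero homomorphism. I would construct semistable $G_s$ and $B:=G_1\otimes\Tt_\Dd$ carrying a nonzero map $G_s\rightarrow B$ by realizing the bundle of smaller rank as a sub- or quotient bundle of the other: using $\mu(G_s)\le\mu(B)$, build (say when $r_s\le r_1$) a semistable extension $0\rightarrow G_s\rightarrow B\rightarrow C\rightarrow0$ whose slopes satisfy $\mu(G_s)\le\mu(B)\le\mu(C)$, so the inclusion is a nonzero homomorphism, and dualize to treat $r_s\ge r_1$. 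Completing $\Ee$ with arbitrary semistable factors in the remaining slots then yields a sheaf in $\UU$ with the nonzero nilpotent field built as in (iii). The technical point shared with (ii) is the existence of the required semistable bundles, here with a prescribed sub/quotient realizing the homomorphism, which I expect to be the crux and to rest on standard existence and genericity results for semistable bundles on $X$.
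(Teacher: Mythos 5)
Your route is the same as the paper's (Theorem \ref{g4}): (iii) via Riemann--Roch applied to the semistable bundle $G_s^\vee\otimes G_1\otimes\Tt_{\Dd}$ and the composition $\Ee \twoheadrightarrow G_s \to G_1\otimes\Tt_{\Dd}\hookrightarrow \Ee\otimes\Tt_{\Dd}$; (ii) via $\Ee=\oplus_i G_i$ with general stable factors plus a Hom-vanishing statement for general pairs; (i) via special factors $G_1,G_s$ carrying a nonzero map $G_s\to G_1\otimes\Tt_{\Dd}$ realized through a sub/quotient construction. The two ingredients you flag without naming are exactly the paper's citations to \cite{RT}: the ``natural cohomology'' vanishing needed in (ii) is Hirschowitz's theorem (\cite[Theorem 1.2]{RT}), which gives $\Hom(\Aa,\Bb)=0$ for a general pair of stable bundles whenever the slope of $\Aa^\vee\otimes\Bb$ is at most $g-1$ (equivalently $\chi\le 0$), and the existence in (i) of an exact sequence of stable bundles with prescribed ranks and degrees and strict slope inequalities is the Lange conjecture, proved in \cite{RT}. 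Modulo these citations, your (iii), (ii), and the cases $r_1\ne r_s$ of (i) reproduce the paper's argument. (Note also that for $g=1$ general stable bundles of prescribed rank and degree need not exist, so your genericity arguments implicitly assume $g\ge 2$; the paper treats $g=1$ separately with polystable factors in Proposition \ref{e2}.)

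The genuine gap is the case $r_1=r_s$ of part (i) with strict inequality $\mu_s<\mu_1+\gamma$. There your construction ``$0\to G_s\to B\to C\to 0$ with $\mu(G_s)\le\mu(B)\le\mu(C)$'' degenerates: the quotient $C$ is a torsion sheaf of length $k=d_1+r_1\gamma-d_s>0$, the slope inequalities are meaningless, and the Lange-type existence results do not apply. What you actually need is that a general stable bundle admits $k$ positive elementary transformations that remain stable, so that $\Bb_1:=B\otimes\Tt_{\Dd}^\vee$ is a legitimate semistable first graded piece of rank $r_1$ and degree $d_1$. This is not a routine genericity statement: it is the paper's Lemma \ref{g1}, whose proof uses the Mukai--Sakai and Hirschowitz bounds on maximal subbundles and, in genus $2$, the finiteness of the family of maximal-degree subbundles to show that a general elementary transformation creates no destabilizing subbundle. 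Your appeal to ``standard existence and genericity results'' does not supply this, so your proof of (i) is incomplete precisely in the equal-rank case. (By contrast, the boundary case $\mu_s=\mu_1+\gamma$ is elementary in every rank configuration --- take $B=G_s\oplus C$ with $C$ semistable of the same slope, or $B=G_s$ when $r_1=r_s$ --- and your argument covers it.)
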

The existence part is induced by explicit usage of positive elementary transformations and the positive answer to the Lange conjecture \cite{RT}. Furthermore we extend the notion of Segre invariant to the setting of logarithmic co-Higgs sheaves and show that it is well-defined over curves under the assumption that $\gamma <0$ and that this invariant is same as the usual Segre invariant under a certain condition; see Corollary \ref{h55} and Proposition \ref{yyy}. 
\begin{theorem}
For a logarithmic co-Higgs sheaf $(\Ee, \Phi)$ on an algebraic curve $X$ with $\gamma <0$, the $k^{th}$-Segre invariant $s_k(\Ee, \Phi)$ is well-defined. It is also equal to the Segre invariant $s_k(\Ee)$ in the usual sense, if $\Ee$ admits the complete Harder-Narasimhan filtration, i.e. $r_i=1$ for all $i$. 
\end{theorem}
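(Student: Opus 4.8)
The plan is to reduce both assertions to a single structural observation: when $\gamma<0$, the Harder-Narasimhan filtration $0=\Ee_0\subset\Ee_1\subset\cdots\subset\Ee_s=\Ee$ of the underlying sheaf is automatically preserved by $\Phi$. To prove this, fix $i<s$ and consider the composite $\psi_i\colon \Ee_i\hookrightarrow\Ee\xrightarrow{\Phi}\Ee\otimes\Tt_{\Dd}\twoheadrightarrow(\Ee/\Ee_i)\otimes\Tt_{\Dd}$. The source has minimal slope $\mu_i$, while the target, being $(\Ee/\Ee_i)\otimes\Tt_{\Dd}$, has maximal slope $\mu_{i+1}+\gamma$. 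Since any nonzero morphism of sheaves on a curve satisfies $\mu_{\min}(\text{source})\le\mu_{\max}(\text{target})$, and since $\mu_i>\mu_{i+1}>\mu_{i+1}+\gamma$ by $\gamma<0$, the map $\psi_i$ must vanish. Hence $\Phi(\Ee_i)\subseteq\Ee_i\otimes\Tt_{\Dd}$ for every $i$, that is, each filtration term $\Ee_i$ is $\Phi$-invariant.

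For the well-definedness I would first record that $\Phi$-invariance is preserved under saturation: if $\Ff\subseteq\Ee$ is $\Phi$-invariant of rank $k$ with saturation $\bar\Ff$, then the composite $\bar\Ff\to(\Ee/\bar\Ff)\otimes\Tt_{\Dd}$ kills $\Ff$ and therefore factors through the torsion sheaf $\bar\Ff/\Ff$; as $(\Ee/\bar\Ff)\otimes\Tt_{\Dd}$ is torsion-free on the smooth curve $X$, the composite vanishes and $\bar\Ff$ is again $\Phi$-invariant. Consequently the extremum defining $s_k(\Ee,\Phi)$ may be computed over $\Phi$-invariant subbundles. Finiteness is then automatic, since the degree of any rank-$k$ subsheaf is bounded above by the sum of the top $k$ Harder-Narasimhan slopes of $\Ee$; the relevant family is nonempty because the $\Phi$-invariant terms $\Ee_i$ produced above already supply invariant subsheaves, and boundedness of the corresponding Quot scheme of finite type guarantees that the supremum is attained. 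This shows $s_k(\Ee,\Phi)$ is a well-defined integer.

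For the second assertion, suppose $r_i=1$ for all $i$, so that the filtration is complete and each quotient $\Ee_i/\Ee_{i-1}$ is a line bundle of degree $\mu_i$ with $\mu_1>\cdots>\mu_r$. By the standard description of the Harder-Narasimhan polygon, the maximal degree of a rank-$k$ subsheaf of $\Ee$ equals $\mu_1+\cdots+\mu_k=\deg\Ee_k$, and this maximum is realized by $\Ee_k$ itself. Since $\Ee_k$ is $\Phi$-invariant by the structural observation, it also computes the constrained maximum defining $s_k(\Ee,\Phi)$. As that maximum ranges over the smaller class of $\Phi$-invariant subsheaves it can only be at most the unconstrained one, while the witness $\Ee_k$ forces the reverse inequality; hence $s_k(\Ee,\Phi)=s_k(\Ee)$.

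The main obstacle is the nonemptiness and attainment step in the well-definedness: for a rank $k$ strictly between two consecutive Harder-Narasimhan ranks, a $\Phi$-invariant subsheaf of that exact rank need not exist unless the induced co-Higgs field on the semistable quotient $\Ee_i/\Ee_{i-1}$ admits an invariant subsheaf of the requisite rank. This is precisely where the hypothesis $\gamma<0$ does the work, since by forcing the entire filtration to be $\Phi$-invariant it furnishes the needed invariant subsheaves at the filtration ranks; and in the complete case $r_i=1$ every rank $k$ is such a filtration rank, which is exactly what makes the comparison with the usual Segre invariant exact.
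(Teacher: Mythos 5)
Your structural observation ($\Phi$-invariance of the Harder--Narasimhan terms when $\gamma<0$) and your treatment of the complete-flag case are both sound, and the latter matches the paper's route (Remark~\ref{h03} plus Proposition~\ref{h07}); the only cosmetic difference there is that the paper proves the polygon bound you invoke as ``standard'' by a direct double induction, obtaining in addition that $\Ff_k$ is the \emph{unique} maximizer. The genuine gap is in the well-definedness claim, and it is exactly the point you flag in your final paragraph and then leave unresolved: for a rank $k$ with $r_1+\cdots+r_h<k<r_1+\cdots+r_{h+1}$, your argument produces no element of $\Ss(k,\Ee,\Phi)$ whatsoever, so the maximum defining $s_k(\Ee,\Phi)$ is a priori taken over the empty set. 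Since the theorem asserts well-definedness for every $k\in\{1,\dots,r-1\}$, not only at the filtration ranks, this is a missing case; moreover your assertion that an invariant subsheaf of such a rank ``need not exist'' is actually false under the hypothesis $\gamma<0$ --- if it could fail, the theorem itself would be false.

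The missing idea (the paper's Proposition~\ref{h06}) is that your slope computation proves strictly more than invariance. Run it with the quotient $\Ee/\Ee_{i-1}$ in place of $\Ee/\Ee_i$: the source $\Ee_i$ has minimal slope $\mu_i$, while $(\Ee/\Ee_{i-1})\otimes\Tt_{\Dd}$ has maximal slope $\mu_i+\gamma<\mu_i$, so in fact $\Phi(\Ee_i)\subseteq\Ee_{i-1}\otimes\Tt_{\Dd}$, i.e.\ $\Phi$ strictly decreases the filtration level (equivalently, the field induced on each semistable graded piece $\Ee_i/\Ee_{i-1}$ must vanish, since a nonzero one would force $\mu_i\le\mu_i+\gamma$). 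Consequently \emph{every} subsheaf $\Aa$ with $\Ee_h\subseteq\Aa\subseteq\Ee_{h+1}$ is $\Phi$-invariant:
\begin{equation*}
\Phi(\Aa)\subseteq\Phi(\Ee_{h+1})\subseteq\Ee_h\otimes\Tt_{\Dd}\subseteq\Aa\otimes\Tt_{\Dd}.
\end{equation*}
Taking $\Aa=u^{-1}(\Bb)$, where $u:\Ee_{h+1}\rightarrow\Ee_{h+1}/\Ee_h$ is the quotient map and $\Bb\subset\Ee_{h+1}/\Ee_h$ is any subsheaf of rank $k-(r_1+\cdots+r_h)$, yields an element of $\Ss(k,\Ee,\Phi)$ for every intermediate rank $k$. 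With nonemptiness settled for all $k$, attainment of the maximum is automatic because a nonempty set of integers bounded above has a largest element (no Quot-scheme boundedness is needed), and your argument for the complete-flag equality then goes through unchanged.
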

\noindent Then we check in Proposition \ref{yyy} that co-Higgs sheaves associated to unstable bundle are usually not stable, not even simple. 

Over algebraic curves the bundle $\Tt_{\Dd}$ is automatically semistable. So, as the counterpart to the case of algebraic curves, in \S \ref{Sh} we deal with the case when the dimension of $X$ is at least $2$ and $\Tt_{\Dd}$ is not semistable. Under the assumption that the biggest slope in the Harder-Narasimhan filtration of $\Tt_{\Dd}$ is negative, we give a recipe to construct all the pairs $(\Ee ,\Phi)$ with $\Ee$ reflexive of $\mathrm{rk}(\Ee ) =r\in \{2,3\}$ and non-trivial co-Higgs field $\Phi : \Ee \rightarrow \Ee \otimes \Tt _{\Dd}$. When $r=2$ and in most cases with $r=3$, the map $\Phi$ is always $2$-nilpotent and so it is integrable. We also /point out exactly when we cannot guarantee the integrability.


\section{Preliminary}
Let $X$ be a smooth projective variety of dimension $n$ at least one with the tangent bundle $T_X$ over the field of complex numbers $\CC$. We fix an ample line bundle $\Oo_X(1)$ and denote by $\Ee(t)$ the twist of $\Ee$ by $\Oo_X(t)$ for any coherent sheaf $\Ee$ on $X$ and $t\in \ZZ$. We also denote by $\Ee^\vee$ the dual of $\Ee$. The dimension of cohomology group $H^i(X, \Ee)$ is denoted by $h^i(X,\Ee)$ and we will skip $X$ in the notation, if there is no confusion. We define the slope $\mu(\Ee)$ of a coherent sheaf $\Ee$ on $X$ with respect to $\Oo_X(1)$ to be $\deg \Ee/ \mathrm{rk}(\Ee)$. 

Now consider an arrangement $\Dd=\{D_1, \ldots, D_m\}$ of pairwise distinct, smooth and irreducible divisors $D_i$ on $X$, and if there is no confusion we also denote by $\Dd$ the divisor $D_1+\ldots + D_m$. We assume that the divisor $\Dd$ has simple normal crossings. Then the associated logarithmic tangent bundle $T_X(-\log \Dd)$ is locally free and fits into the following exact sequence; see \cite{D}. 
\begin{equation}\label{log1}
0\to T_X(-\log \Dd) \to T_X \to \oplus_{i=1}^m {\epsilon_i}_*\Oo_{D_i}(D_i) \to 0,
\end{equation}
where $\epsilon_i: D_i \rightarrow X$ is the embedding. If there is no confusion, we will simply denote $T_X(-\log \Dd)$ by $\Tt_{\Dd}$. 

\begin{definition}\cite{BH}
A {\it $\Dd$-logarithmic co-Higgs} sheaf on $X$ is a pair $(\Ee, \Phi)$ where $\Ee$ is a torsion-free coherent sheaf on $X$ and $\Phi: \Ee \rightarrow \Ee \otimes \Tt_{\Dd}$ with $\Phi \wedge \Phi=0$. Here $\Phi$ is called the {\it logarithmic co-Higgs field} of $(\Ee, \Phi)$ and the condition $\Phi \wedge \Phi=0$ is an integrability condition originating in the work of Simpson \cite{S}. 
\end{definition}

For a torsion-free coherent sheaf $\Ee$ on $X$, we consider its associated Harder-Narasimhan filtration:
\begin{equation}\label{hn}
\{0\} =\Ff _0\subset \Ff _1\subset \cdots \subset \Ff _s =\Ee
\end{equation}
with the graduation $gr (\Ee) := \oplus _{i=1}^{s} \Ff _i/\Ff _{i-1}$ such that each $\Ff _i/\Ff _{i-1} $ is semistable and $\mu (\Ff _i/\Ff _{i-1})$ is strictly decreasing for all $i<s$. The integer $s$ is called {\it the length} of the filtration, and if $s=r$, then the filtration is said to be {\it complete}. We denote by $\mu_+(\Ee)$ and $\mu_- (\Ee)$ the maximal and minimal slopes in the filtration, respectively:
$$\mu_+ (\Ee):=\mu (\Ff_1) ~,~ \mu_-(\Ee):=\mu (\Ff_s/\Ff_{s-1}).$$

\begin{remark}\label{aaa0}
For two torsion-free sheaves $\Aa$ and $\Bb$ on $X$, let $\Aa \overline{\otimes}\Bb$ be the quotient of $\Aa\otimes \Bb$ by its torsion. If $\Aa$ and $\Bb$ are semistable, then $\Aa \overline{\otimes}\Bb$ is also semistable by \cite[Theorem 2.5]{Maruyama}. Applying this observation to the Harder-Narasimhan filtrations of $\Aa$ and $\Bb$, we get that $\mu _+(\Aa \overline{\otimes}\Bb ) = \mu _+(\Aa) +\mu _+(\Bb)$.
\end{remark}

\begin{lemma}\label{aaa1}
If $f: \Aa \rightarrow \Bb$ is a nonzero map between two torsion-free sheaves on $X$, then we have $\mu _-(\Aa) \le \mu _+(\Bb)$ 
\end{lemma}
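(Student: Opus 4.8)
The plan is to deduce the statement from the special case in which the source is semistable, and to access that case by looking at the image of $f$ as a subsheaf of $\Bb$.

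First I would isolate the following sub-fact: if $\Ss$ is a semistable torsion-free sheaf and $g\colon \Ss \to \Bb$ is any nonzero map into a torsion-free sheaf, then $\mu(\Ss)\le \mu_+(\Bb)$. To see this, consider the short exact sequence $0\to \ker(g)\to \Ss\to \Image(g)\to 0$, where $\Image(g)$ is nonzero and, being a subsheaf of the torsion-free sheaf $\Bb$, is again torsion-free; in particular all three terms have well-defined rank and degree. The kernel is a subsheaf of $\Ss$, so semistability gives $\mu(\ker(g))\le\mu(\Ss)$, and the mediant inequality applied to the exact sequence then forces $\mu(\Image(g))\ge\mu(\Ss)$. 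On the other hand $\Image(g)$ is a nonzero subsheaf of $\Bb$, so $\mu(\Image(g))\le\mu_+(\Bb)$ by the maximality property of the maximal destabilizing subsheaf. Chaining the two inequalities yields $\mu(\Ss)\le\mu_+(\Bb)$.

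For the general case I would exploit the Harder--Narasimhan filtration $\{0\}=\Ff_0\subset\Ff_1\subset\cdots\subset\Ff_s=\Aa$ of $\Aa$ together with the hypothesis $f\ne 0$. Let $j$ be the least index with $f|_{\Ff_j}\ne 0$; this exists because $f|_{\Ff_0}=0$ while $f|_{\Ff_s}=f\ne 0$. Minimality gives $f|_{\Ff_{j-1}}=0$, so $f|_{\Ff_j}$ factors through the quotient and produces a nonzero map $\Ff_j/\Ff_{j-1}\to\Bb$. Since $\Ff_j/\Ff_{j-1}$ is semistable, the sub-fact gives $\mu(\Ff_j/\Ff_{j-1})\le\mu_+(\Bb)$. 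Finally, the slopes $\mu(\Ff_i/\Ff_{i-1})$ strictly decrease in $i$, so $\mu_-(\Aa)=\mu(\Ff_s/\Ff_{s-1})\le\mu(\Ff_j/\Ff_{j-1})$, and we conclude $\mu_-(\Aa)\le\mu_+(\Bb)$.

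There is no serious obstacle here, as the statement is a standard consequence of Harder--Narasimhan theory. The only points requiring a word of care are the two slope inequalities invoked in the sub-fact --- semistability of $\Ss$ controlling the slope of $\ker(g)$, and the maximality of $\mu_+(\Bb)$ among all nonzero subsheaves of $\Bb$ --- together with the observation that over a curve everything in sight is torsion-free, so that ranks and degrees behave additively in the relevant short exact sequences and all slopes are well-defined.
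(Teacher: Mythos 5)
Your proof is correct, and its first half coincides with the paper's: both pass to the least index $j$ (the paper's $k$) for which $f$ does not vanish on $\Ff_j$, thereby producing a nonzero map from the semistable subquotient $\Ff_j/\Ff_{j-1}$ into $\Bb$, and both finish with $\mu_-(\Aa)\le\mu(\Ff_j/\Ff_{j-1})$. Where you diverge is on the target side. The paper runs the same filtration-hopping argument a second time, inside $\Bb$: it takes the least $l$ with $\widetilde{f}(\Ff_j/\Ff_{j-1})\subseteq \Bb_l$, obtains a nonzero map between the two semistable sheaves $\Ff_j/\Ff_{j-1}$ and $\Bb_l/\Bb_{l-1}$, and then invokes the standard slope inequality for nonzero maps between semistable sheaves together with $\mu(\Bb_l/\Bb_{l-1})\le\mu_+(\Bb)$. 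You instead examine $\Image(g)$ directly: it is a quotient of the semistable source, hence of slope $\ge\mu(\Ss)$ by the mediant argument, and it is a nonzero subsheaf of $\Bb$, hence of slope $\le\mu_+(\Bb)$ by the maximality property of the maximal destabilizing subsheaf. In effect you inline the proof of the semistable-to-semistable slope lemma that the paper quotes as known, trading the second Harder--Narasimhan filtration for the (equally standard) fact that $\mu_+(\Bb)$ bounds the slope of every nonzero subsheaf of $\Bb$; your version is slightly more self-contained, the paper's slightly more uniform, and both are rigorous. One small correction: the lemma sits in the paper's preliminary section, where $X$ is a smooth projective variety of any dimension $n\ge 1$ and slopes are taken with respect to $\Oo_X(1)$, so your closing restriction to curves is unnecessary --- kernels and images of maps of torsion-free sheaves are torsion-free in any dimension, and degree (defined via $c_1$ against $\Oo_X(1)^{n-1}$) is additive in short exact sequences, so your argument goes through verbatim in that generality.
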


\begin{proof}
Let $\{0\}=\Aa_0 \subset \Aa _1\subset \cdots \subset \Aa _a =\Aa$ be the Harder-Narasimhan filtration of $\Aa$ and let $k\in \{1,\dots ,a\}$ be the minimal integer such that $\Aa _k\nsubseteq \mathrm{ker}(f)$, i.e. the minimal integer such that $f_{|\Aa _k}\not\equiv  0$. Then we have $f_{|\Aa _{k-1}}\equiv 0$ and so $f_{|\Aa _k}$ induces a nonzero map $\widetilde{f}: \Aa _k/\Aa _{k-1}\rightarrow \Bb$. 

Let $\{0\}=\Bb_0 \subset \Bb _1\subset \cdots \subset \Bb _b=\Bb$ be the Harder-Narasimhan filtration of $\Bb$ and let $l$ be the minimal positive integer $l \le b$ such that $\widetilde{f}(\Aa _k/\Aa _{k-1})\subseteq \Bb _l$. Then we have $\widetilde{f} (\Aa _k/\Aa _{k-1})\nsubseteq \Bb _{l-1}$ and so $\widetilde{f}$ induces a nonzero map $\hat{f} : \Aa _k/\Aa _{k-1} \rightarrow \Bb _l/\Bb _{l-1}$. Since  $\Aa _k/\Aa _{k-1}$ and $ \Bb _l/\Bb _{l-1}$ are semistable, we have $\mu ( \Aa _k/\Aa _{k-1})\le \mu (\Bb _l/\Bb _{l-1})$. By the definition of $\mu _+$ and $\mu _-$ in terms of the Harder-Narasimhan filtration we have $\mu ( \Aa _k/\Aa _{k-1})\ge \mu _-(\Aa)$ and $ \mu (\Bb _l/\Bb _{l-1}) \le \mu _+(\Bb)$, concluding the assertion. 
\end{proof}

Remark \ref{aaa0} and Lemma \ref{aaa1} give the following whose assertion will be assumed throughout this article. 
\begin{corollary}\label{aaa2}
Assuming the existence of a nonzero map $\Phi: \Ee \rightarrow \Ee \otimes \Tt _{\Dd}$, we have
\begin{equation}\label{min}
\mu _-(\Ee )\le \mu _+(\Ee) +\mu _+(\Tt_{\Dd})=\mu_+(\Ee \otimes \Tt_{\Dd}).
\end{equation}
\end{corollary}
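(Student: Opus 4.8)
The plan is to deduce the corollary directly from the two preceding results, bridging them with one small observation. First I would note that, since $\Tt_{\Dd}$ is locally free (as recorded after (\ref{log1})), the tensor product $\Ee \otimes \Tt_{\Dd}$ of the torsion-free sheaf $\Ee$ with a vector bundle is again torsion-free. Consequently its torsion subsheaf is zero, so the torsion quotient coincides with the sheaf itself, i.e. $\Ee \otimes \Tt_{\Dd} = \Ee \overline{\otimes} \Tt_{\Dd}$. This identification is the key point, since it lets me invoke Remark \ref{aaa0}, which is phrased for $\overline{\otimes}$, while the co-Higgs field of the definition involves the ordinary tensor product.

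Next, I would view $\Phi$ as a nonzero map $f : \Aa \to \Bb$ in the sense of Lemma \ref{aaa1}, taking $\Aa = \Ee$ and $\Bb = \Ee \otimes \Tt_{\Dd}$; both are torsion-free, so the lemma applies and yields $\mu_-(\Ee) \le \mu_+(\Ee \otimes \Tt_{\Dd})$. Then Remark \ref{aaa0}, applied to the semistable successive quotients in the Harder-Narasimhan filtrations of $\Ee$ and of $\Tt_{\Dd}$, gives the equality $\mu_+(\Ee \otimes \Tt_{\Dd}) = \mu_+(\Ee \overline{\otimes} \Tt_{\Dd}) = \mu_+(\Ee) + \mu_+(\Tt_{\Dd})$. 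Chaining the inequality with this equality produces exactly the displayed relation (\ref{min}).

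There is essentially no obstacle here: the substance is entirely front-loaded into Lemma \ref{aaa1} and Remark \ref{aaa0}, and the corollary is their immediate combination. The only place demanding a moment's care is the torsion-freeness observation that aligns $\otimes$ with $\overline{\otimes}$ and that simultaneously lets Lemma \ref{aaa1} apply to the target. Were $\Tt_{\Dd}$ not locally free, one would instead have to track the torsion subsheaf of $\Ee \otimes \Tt_{\Dd}$ and check that passing to the torsion-free quotient leaves $\mu_+$ unchanged; since $\Tt_{\Dd}$ is a bundle over the (smooth) base, this complication never arises.
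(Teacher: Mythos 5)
Your proposal is correct and follows exactly the paper's route: the paper states Corollary \ref{aaa2} as an immediate consequence of Lemma \ref{aaa1} (applied to $\Phi$ as a nonzero map between torsion-free sheaves) together with Remark \ref{aaa0} (giving $\mu_+(\Ee \otimes \Tt_{\Dd}) = \mu_+(\Ee) + \mu_+(\Tt_{\Dd})$), with no further argument. Your extra observation that local freeness of $\Tt_{\Dd}$ makes $\Ee \otimes \Tt_{\Dd}$ torsion-free, so that $\otimes$ and $\overline{\otimes}$ agree, is a correct and worthwhile clarification of a point the paper leaves implicit.
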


\begin{remark}\label{p1}
Assume that $\Ee$ is not semistable and so $s\ge 2$. If there exists a nonzero map $f\in \Hom (\Ee /\Ff _{s-1},\Ff _{s-1}\otimes \Tt_{\Dd})$, then we may composite the quotient map $\Ee \rightarrow \Ee /\Ff _{s-1}$ with it to get a nonzero $2$-nilpotent logarithmic co-Higgs field $\Phi_f$. Note that the associated co-Higgs field is uniquely determined by the choice of a map, i.e. if $f$ and $g$ are two different nonzero maps in $\Hom (\Ee /\Ff _{s-1},\Ff _{s-1}\otimes \Tt_{\Dd})$, then we get $\Phi_f \ne \Phi_g$. 
\end{remark}

If $n$ is at least two, we fix a polarization $\Oo _X(1)$ with respect to which we consider (semi-)stability. For most cases in this article we will mainly assume that $\Dd$ is of high degree so that $\Tt_{\Dd}$ is ``sufficiently negative'' and that $\Tt_{\Dd}$ is semistable with $\gamma=\deg \Tt_{\Dd}<0$, except in \S \ref{us};
\begin{assumption}
We always assume that $\gamma=\deg \Tt_{\Dd}$ is negative, if there is no specification.
\end{assumption}

\begin{remark}
There are manifolds with $\Omega _X^1$ ample as in \cite{db1,db2}, in which cases we may even take $\Dd =\emptyset$: If instead of logarithmic co-Higgs field we use the field $T_X(-\Dd)\cong T_X\otimes \Oo_X(-\Dd)$ vanishing on a divisor $\Dd$, then we may use the semistability of the tangent bundle of many Fano manifolds \cite{PW} and then take a very positive $\Dd$ to get $T_X(-\Dd)$ negative and semistable.
\end{remark}

We fix a triple of integers $(r,d,s)\in \ZZ^{\oplus 3}$ together with pairs $( r_i, d_i)\in \ZZ^{\oplus 2}$ for $1\le i \le s$ such that $r\ge 2$, $s\ge 1$, $r_i\ge 1$ and 
$$r =r_1+\cdots +r_s ~,~d = d_1+\cdots +d_s.$$
Assume further that $d_i/r_i > d_{i+1}/r_{i+1}$ for $i=1,\dots ,s-1$. Then we denote by 
$$\UU=\UU_X(s; r_1, d_1;\dots; r_s,d_s)$$ 
the set of all torsion-free coherent sheaves $\Ee$ of rank $r$ such that the Harder-Narasimhan filtration (\ref{hn}) of $\Ee$ with respect to $\Oo _X(1)$ has $(r_1,d_1;\cdots ;r_s, d_s)$ as its numerical data, i.e. each quotient sheaf $\Ff _i/\Ff _{i-1}$ is semistable of rank $r_i$ and degree $d_i$. By \cite{Maruyama} the filtration (\ref{hn}) tensored by $\Tt_{\Dd}$
\begin{equation}\label{hn1}
\{0\}=\Ff_0\otimes \Tt_{\Dd} \subset \Ff _1\otimes \Tt_{\Dd}\subset \cdots \subset \Ff _s\otimes \Tt_{\Dd}
\end{equation}
is the Harder-Narasimhan filtration of $\Ee \otimes \Tt_{\Dd}$ if $\Tt_{\Dd}$ is semistable. We also assume the existence of a nonzero co-Higgs field $\Phi : \Ee \rightarrow \Ee \otimes \Tt_{\Dd}$: if $n$ is at least two, we do not assume for the moment the integrability condition $\Phi \wedge \Phi =0$, because in the most examples in this article it will follow from the other assumption, or from Lemma \ref{p1}, where we assume that $s$ is at least two.

Denote by $\widetilde{\Phi}$ the following map:
$$ \Ee \otimes \Tt_{\Dd} \to \Ee \otimes \Tt_{\Dd}^{\otimes 2}$$
induced by $\Phi$. Comsiting the natural map $\Tt_{\Dd}^{\otimes 2} \rightarrow \wedge^2 \Tt_{\Dd}$ with $\widetilde{\Phi}\circ \Phi$, we have $\Phi\wedge \Phi$ as an element in $\mathrm{Hom}(\Ee, \Ee \otimes \wedge^2 \Tt_{\Dd})$. 

\begin{lemma}\label{p1}
If $s$ is at least two, then we have $\widetilde{\Phi} \circ \Phi =0$, i.e. $\Phi$ is $2$-nilpotent. In particular, we have $\Phi \wedge \Phi =0$.
\end{lemma}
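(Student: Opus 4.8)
The plan is to deduce everything from a single slope inequality via Lemma~\ref{aaa1}. Because $\Tt_{\Dd}$ is locally free, the composite $\widetilde{\Phi}\circ\Phi$ is a morphism of torsion-free sheaves $\Ee\to\Ee\otimes\Tt_{\Dd}^{\otimes2}$, so by Lemma~\ref{aaa1} it can be nonzero only if $\mu_-(\Ee)\le\mu_+(\Ee\otimes\Tt_{\Dd}^{\otimes2})$. The whole argument therefore reduces to showing that this inequality fails; once $\widetilde{\Phi}\circ\Phi=0$ is known, the vanishing $\Phi\wedge\Phi=0$ is immediate, since $\Phi\wedge\Phi$ is by construction the composite of $\widetilde{\Phi}\circ\Phi$ with the natural map $\Tt_{\Dd}^{\otimes2}\to\wedge^2\Tt_{\Dd}$.

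To evaluate the right-hand side, I would use that $\Tt_{\Dd}$ is semistable, hence so is $\Tt_{\Dd}^{\otimes2}$ by the tensor-product result recalled in Remark~\ref{aaa0}, with slope $2\mu_+(\Tt_{\Dd})$. Applying Remark~\ref{aaa0} to $\Ee$ and $\Tt_{\Dd}^{\otimes2}$ then gives $\mu_+(\Ee\otimes\Tt_{\Dd}^{\otimes2})=\mu_+(\Ee)+2\mu_+(\Tt_{\Dd})$. Since $\gamma=\deg\Tt_{\Dd}<0$ and $\Tt_{\Dd}$ is semistable we have $\mu_+(\Tt_{\Dd})<0$, so this slope lies strictly below $\mu_+(\Ee)+\mu_+(\Tt_{\Dd})$. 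Comparing with the bound (\ref{min}) furnished by Corollary~\ref{aaa2}, namely $\mu_-(\Ee)\le\mu_+(\Ee)+\mu_+(\Tt_{\Dd})$, the goal becomes the strict separation
$$\mu_-(\Ee)>\mu_+(\Ee)+2\mu_+(\Tt_{\Dd}),$$
which by Lemma~\ref{aaa1} forces $\widetilde{\Phi}\circ\Phi=0$.

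The delicate point is precisely this last strict inequality: the bound (\ref{min}) alone only controls $\mu_-(\Ee)$ against $\mu_+(\Ee)+\mu_+(\Tt_{\Dd})$, and one must gain the extra room $-\mu_+(\Tt_{\Dd})>0$ coming from the negativity of $\Tt_{\Dd}$. To secure it I would argue at the level of the filtration (\ref{hn}). The same slope comparisons, applied to the projection $\Ff_i\to(\Ee/\Ff_i)\otimes\Tt_{\Dd}$ and to the induced map on the semistable quotient $\Ff_i/\Ff_{i-1}\to(\Ff_i/\Ff_{i-1})\otimes\Tt_{\Dd}$, show via $\mu_+(\Tt_{\Dd})<0$ that $\Phi$ preserves (\ref{hn}) and annihilates every graded piece, i.e. $\Phi(\Ff_i)\subseteq\Ff_{i-1}\otimes\Tt_{\Dd}$. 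Because (\ref{min}) leaves room only for the extremal graded component $\Ff_s/\Ff_{s-1}\to\Ff_1\otimes\Tt_{\Dd}$, this forces $\Phi(\Ff_{s-1})=0$ together with $\Phi(\Ee)\subseteq\Ff_1\otimes\Tt_{\Dd}$; since $s\ge2$ gives $\Ff_1\subseteq\Ff_{s-1}$, we conclude $\widetilde{\Phi}\circ\Phi(\Ee)\subseteq\Phi(\Ff_1)\otimes\Tt_{\Dd}=0$. The case $s=2$ falls out at once, and the main obstacle is to rule out, in the general case, two successive nonzero graded jumps of $\Phi$.
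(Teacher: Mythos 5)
Your reduction of $\Phi\wedge\Phi=0$ to $\widetilde{\Phi}\circ\Phi=0$ is fine, and so is your filtration-level step: the slope comparisons via Lemma~\ref{aaa1} and the semistability of $\Tt_{\Dd}$ do prove $\Phi(\Ff_i)\subseteq\Ff_{i-1}\otimes\Tt_{\Dd}$ for every $i$ (this is exactly Remark~\ref{h03} of the paper). The genuine gap is the step you yourself flag as delicate. The inequality (\ref{min}) is the single comparison $\mu_-(\Ee)\le\mu_+(\Ee)+\mu_+(\Tt_{\Dd})$; it does not ``leave room only for the extremal graded component''. A graded component $\Ff_i/\Ff_{i-1}\rightarrow(\Ff_j/\Ff_{j-1})\otimes\Tt_{\Dd}$ with $j<i$ is compatible with Lemma~\ref{aaa1} whenever $\mu(\Ff_i/\Ff_{i-1})\le\mu(\Ff_j/\Ff_{j-1})+\mu_+(\Tt_{\Dd})$, and for $s\ge3$ nothing in (\ref{min}) forbids two consecutive admissible jumps, say $(i,j)=(3,2)$ and $(2,1)$, whose composition is precisely a nonzero $\widetilde{\Phi}\circ\Phi$. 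For the same reason your target strict inequality $\mu_-(\Ee)>\mu_+(\Ee)+2\mu_+(\Tt_{\Dd})$ is not available: it can simply fail, and it would prove the far stronger statement $\Hom(\Ee,\Ee\otimes\Tt_{\Dd}^{\otimes 2})=0$, whereas the desired vanishing is structural, about one particular composite.

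Moreover this gap cannot be closed, because the statement is false for $s\ge3$. Take $X=\PP^1$ and $\Dd$ three points, so $\Tt_{\Dd}\cong\Oo_{\PP^1}(-1)$ is semistable with $\gamma=-1<0$, and let $\Ee=\Oo_{\PP^1}(2)\oplus\Oo_{\PP^1}(1)\oplus\Oo_{\PP^1}$, so $s=3$ and (\ref{min}) holds. Define $\Phi$ to be zero on $\Oo_{\PP^1}(2)$ and to be the canonical isomorphisms $\Oo_{\PP^1}(1)\rightarrow\Oo_{\PP^1}(2)\otimes\Tt_{\Dd}$ and $\Oo_{\PP^1}\rightarrow\Oo_{\PP^1}(1)\otimes\Tt_{\Dd}$ on the other summands. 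Since $n=1$, integrability is automatic, so $(\Ee,\Phi)$ is a legitimate logarithmic co-Higgs bundle, yet $\widetilde{\Phi}\circ\Phi$ restricted to $\Oo_{\PP^1}$ is an isomorphism onto $\Oo_{\PP^1}(2)\otimes\Tt_{\Dd}^{\otimes2}\cong\Oo_{\PP^1}$. Your filtration argument correctly yields only $s$-step nilpotency, $\Phi^{(s)}=0$. You should know that the paper's own proof commits the same error: it asserts $\Phi(\Ee)\subseteq\Ff_1\otimes\Tt_{\Dd}$, which is justified only when $s=2$ (then $\Ee/\Ff_1$ is semistable of slope $\mu_-(\Ee)$, so the composite $\Ee\rightarrow(\Ee/\Ff_1)\otimes\Tt_{\Dd}$ must vanish), or under an extra numerical hypothesis such as $\mu(\Ff_2/\Ff_1)+\mu_+(\Tt_{\Dd})<\mu_-(\Ee)$, which is exactly the condition the paper later invokes in \S\ref{sss}. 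So your proposal reproduces the paper's argument, flaw included; both are complete precisely in the case $s=2$.
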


\begin{proof}
Since we assume that $\gamma$ is negative, the sheaf $\Ff _1\otimes \Tt_{\Dd} \subset \Ee \otimes \Tt_{\Dd}$ is the Harder-Narasimhan filtration of $\Ee \otimes \Tt_{\Dd}$
and  $\Ff _1\otimes \Tt_{\Dd}^{\otimes 2} \subset \Ee \otimes \Tt_{\Dd}^{\otimes 2}$ is the Harder-Narasimhan filtration of $\Ee \otimes \Tt_{\Dd}^{\otimes 2}$. Thus we have $\Phi (\Ee )\subseteq \Ff _1\otimes \Tt_{\Dd}$ and $\widetilde{\Phi}(\Ff _1\otimes \Tt_{\Dd})=0$, implying that $\widetilde{\Phi} \circ \Phi =0$.
\end{proof}

\section{Curve case}\label{curve}
Assume that $X$ is a smooth algebraic curve of genus $g$ and take $\Dd=\{p_1, \ldots, p_m\}$ a set of $m$ distinct points. Then we have $\Tt_{\Dd} \cong T_X\otimes \Oo_X(-\Dd)$ with degree $\gamma:=2-2g-m$. We assume that $\gamma$ is negative so that we are not in the set-up of \cite{Nitsure}. The sequence (\ref{log1}) turns into the following 
$$0\to \Tt_{\Dd} \to T_X \to \oplus_{i=1}^m \CC_{p_i} \to 0.$$
Another feature of the case $n=1$ is that all logarithmic co-Higgs fields automatically satisfy the integrability condition.

Consider a vector bundle $\Ee$ of rank $r$ with the Harder-Narasimhan filtration (\ref{hn}) and we assume 
\begin{equation}\label{bseq}
\mu_-(\Ee)=\mu (\Ff _s/\Ff _{s-1})\le \gamma+\mu (\Ff_1)=\gamma+\mu_+(\Ee),
\end{equation}
which is a necessary condition for the existence of a nonzero map $\Phi: \Ee \rightarrow \Ee \otimes \Tt_{\Dd}$; see Corollary \ref{aaa2}. 

\begin{remark}
For each $i\in \{1,\dots ,s\}$ with $\mu (\Ff _i/\Ff _{i-1}) +\gamma \ge \mu_- (\Ee)$, define
$$b(i):=\min_{i+1 \le k \le s} \left\{ k~|~ \mu(\Ff_i/\Ff_{i-1}) + \gamma \ge \mu (\Ff_k/\Ff_{k-1})\right\}$$
and then the map $\Phi$ induces a map $\Phi^i: \Ff_{b(i)}/\Ff_{b(i)-1} \rightarrow \left(\Ff_i/\Ff_{i-1}\right) \otimes \Tt_{\Dd}$. Similarly, for each $j\in \{2,\dots ,s\}$ with $\mu (\Ff _j/\Ff _{j-1}) \le \gamma+\mu_+ (\Ee)$, define
$$c(j):=\max_{1\le k \le j-1} \left\{ k~|~ \mu (\Ff _j/\Ff _{j-1}) \le \gamma+\mu (\Ff _k/\Ff _{k-1})\right\}.$$ 
The map $\Phi$ induces a map $\Phi_j: \Ff _j/\Ff _{j-1} \rightarrow \left(\Ff _{c(j)}/\Ff _{c(j)-1}\right) \otimes \Tt_{\Dd}$. Note that these maps $\Phi^i$ and $\Phi_j$ are not necessarily nonzero. 
\end{remark}

Now fix the following numeric data
$$(s~;~ r_1, \ldots, r_s~;~ d_1, \ldots, d_s)\in \ZZ^{\oplus (2s+1)}$$
with $s,r_i>0$ for each $i$ such that $d_i/r_i> d_{i+1}/r_{i+1}$ for all $i$; if $g=0$, we also assume $a_i/r_i\in \ZZ$ for each $i$. Recall that we denote by $\UU_X (s;r_1,d_1;\dots ;r_s,d_s)$ the set of all vector bundles $\Ee$ of rank $r:=\sum_{i=1}^sr_i$ on $X$ with the Harder-Narasimhan filtration (\ref{hn}) such that $\mathrm{rk}(\Ff _i/\Ff _{i-1}) = r_i$ and $\deg \Ff _i/\Ff _{i-1} =d_i$ for each $i$. The conditions just given above for $s$, $r_i$ and $d_i$ are the necessary and sufficient conditions for the existence of a vector bundle $\Ee$ on $X$ with rank $r$ and degree $d:=d_1+\cdots +d_s$. 

Indeed, for the existence part, in case $g\ge 2$ we may even take a stable bundle $\Ff _i/\Ff _{i-1}$, while in case $g=1$ by Atiyah's classification of vector bundles on elliptic curves, we may take as $\Ff _i/\Ff _{i-1}$ a semistable bundle; we can choose either indecomposable one or polystable one, depending on our purpose. 

To get parameters spaces we first get parameter spaces for the sheaves $\Ee$, then for a fixed sheaf $\Ee$ we study all logarithmic co-Higgs fields $\Phi : \Ee \rightarrow \Ee \otimes \Tt_{\Dd}$ and then we put together the informations. We have several problems coming from the sheaves $\Ee$ such as non-separatedness or often reducibility of moduli of sheaves, and then more problems bring the logarithmic co-Higgs field into the picture.

First of all, we fix enough numerical invariant to get a bounded family of pairs $(\Ee ,\Phi )$. Fixing an ample line bundle $\Oo _X(1)$, we consider sheaves $\Ee$ with a Harder-Narasimhan filtration (\ref{hn}) and we fix the Hilbert function of each subquotient $\Ff _i/\Ff _{i-1}$. Since each $\Ff _i/\Ff _{i-1}$ is assumed to be semistable, the family of all $\Ff _i/\Ff _{i-1}$ are bounded. We first see that the $\Ext ^1$-groups involved in the extensions
$$0\to \Ff _1\to \Ff _2\to \Ff _2/\Ff _1\to 0$$ 
are upper bounded and that the set of all $\Ff _2$ is bounded. Then we consider the set of all $\Ff_3$ and so on, inductively. We may get relative $\Ext ^1$-groups as parameter spaces, but these parameter spaces usually do not parametrizes one-to-one isomorphism classes of sheaves, even by taking into account that proportional extensions gives isomorphic sheaves. 

For the relative $\Ext^1$ we need to have universal family parametrizing all $\Ff _i/\Ff _{i-1}$ and we usually need to work with parameter spaces of sheaves which do not parametrizes one-to-one isomorphic classes. Note that there is a flat family with isomorphic sheaves $\Ee$ whose flat limit is $gr (\Ee ) =\oplus _{i=1}^{s} \Ff _i/\Ff _{i-1}$. Thus there is no hope of one-to-one parametrization of  isomorphism classes of sheaves; when the numerology allows that some $\Ff _i/\Ff _{i-1}$ is strictly semistable, then this phenomenon occurs even for the graded subquotient $\Ff _i/\Ff _{i-1}$. Algebraic stacks of course do not parametrize isomorphism classes of sheaves, not even of vector bundles; see \cite{Gomez}. In the case $n=1$ with $X =\PP^1$, we have a unique bundle, $\Ee$ for any fixed parameter space $\UU_{\PP^1}(s;r_1,d_1;\cdots ;r_s,d_s)$ and so the parameter space for $(\Ee ,\Phi )$ is the vector space $\Hom (\Ee ,\Ee \otimes \Tt _{\Dd})$, which parametrizes one-to-one the isomorphism classes of pairs $(\Ee ,\Phi )$. See Remark \ref{h02.1} for the case $n=1$ and $X$ a curve of genus $g\ge 2$.

\begin{remark}
In the case $s=2$, the datum of $(\Ee ,\Phi )$ with $[\Ee] \in \UU_X (2;r_1,d_1;r_2,d_2)$ and $\Phi :  \Ee \rightarrow \Ee \otimes \Tt_{\Dd}$ induces a holomorphic triple $\psi : \Ee /\Ff _1\to \Ff _1\otimes \Tt_{\Dd}$ in the sense of \cite{bgg} and we may study the stability of the holomorphic triple. Conversely, for every holomorphic triple $f: \Gg _2\rightarrow \Gg _1\otimes \Tt_{\Dd}$ such that $\Gg _1$ and $\Gg _2$ are semistable with $\mathrm{rk}(\Gg _i) = r_i$ and $\deg \Gg _i =d_i$, $i=1,2$, and for any extension class 
\begin{equation}\label{eq+a1}
0 \to \Gg _1\to \Ee \to \Gg _2\to 0,
\end{equation}
we get $[\Ee] \in \UU_X (2;r_1,d_1;r_2,d_2)$ with $0\subset \Gg_1 \subset \Ee$ as its Harder-Narasimhan filtration and a $2$-nilpotent map $\Phi : \Ee \rightarrow \Ee \otimes \Tt_{\Dd}$ induced by $f$. Two sheaves, say $\Ee$ and $\Ee'$, fitting as middle bundles in (\ref{eq+a1}) for the same $\Gg _1$ and $\Gg _2$ are isomorphic if and only if their associated extensions are proportional, because $\Gg _1$ and $\Gg _2$ are assumed to be semistable with $d_1/r_1>d_2/r_2$ and so  (\ref{eq+a1}) is the Harder-Narasimhan filtration of the bundle in the middle.

This argument fits very well in \S 5.1, where $\Tt _{\Dd}$ is assumed to be semistable, because $\Ff _i\otimes \Tt _{\Dd}$ would be in the Harder-Narasimhan filtration of $\Ee\otimes \Tt _{\Dd}$;
in this case we only require that $\Ee$ is torsion-free and then define $\UU (s;r_1,d_1;\dots ;r_s,d_s)$ with Mumford's (slope-)semistability.
\end{remark}

\subsection{Projective line}
We take $X= \PP^1$ and then we have $\Tt_{\Dd} \cong \Oo_{\PP^1}(\gamma)$ with $\gamma<0$. Any vector bundle $\Ee\cong \oplus_{i=1}^r \Oo_{\PP^1}(a_i)$ on $\PP^1$ with $a_1\ge \cdots \ge a_r$ can be rewritten as
\begin{equation}\label{p1b}
\Ee \cong \Oo_{\PP^1}(b_1)^{\oplus r_1}\oplus \cdots \oplus \Oo_{\PP^1}(b_s)^{\oplus r_s},
\end{equation}
with $\sum_{i=1}^s r_i=r$ and $b_1>\cdots > b_s$, i.e. in the Harder-Narasimhan filtration (\ref{hn}) associated to $\Ee$, we have $\Ff_i/\Ff_{i-1}\cong \Oo_{\PP^1}(b_i)^{\oplus r_i}$. Now consider $\UU_{\PP^1} (s;r_1,d_1;\dots ;r_s,d_s)$ with $b_i:= d_i/r_i$ and then it is a single point set, consisting only of $\Ee$. Set 
$$\Delta := \sum _{1\le i<j\le s} \max \{0, \gamma +1+b_i-b_j\}.$$
Then we have $h^0(\mathcal{H}om (\Ee ,\Ee (\gamma ))) =\Delta$. So the parameter space is a well-defined vector space, or its associated projective space if we consider nonzero co-Higgs fields up to scalar multiplication. We have $\Delta >0$ if and only if $b_1+\gamma \ge b_s$. 

For any $\Phi \in \Hom (\Ee ,\Ee (\gamma ))$ and any positive integer $i$, let $\Phi ^{(i)}: \Ee \rightarrow  \Ee (i\gamma )$ be the map obtained by iterating $i$ times a shift of $\Phi$. If $b_1+i\gamma < b_s$ for some $i$, then we have $\Phi ^{(i)} =0$ and so $\Phi$ is a nilpotent logarithmic co-Higgs field. In particular, if $b_1+2\gamma < b_s  \le b_1+\gamma$, then all logarithmic co-Higgs fields are $2$-nilpotents and so we have the following. 

\begin{proposition}
For the bundle $\Ee$ in (\ref{p1b}) with $2\gamma \le b_s-b_1 < \gamma$, the set of its co-Higgs structures is identified with a $\Delta$-dimensional vector space. 
\end{proposition}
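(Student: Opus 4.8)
The plan is to reduce the statement to two facts that are essentially already in hand: that the bundle $\Ee$ is rigid on $\PP^1$, and that on a curve every co-Higgs field is automatically integrable. Once these are in place, the set of co-Higgs structures on $\Ee$ is literally the vector space $\Hom(\Ee,\Ee(\gamma))$, whose dimension was computed to be $\Delta$ immediately before the statement.

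First I would record that, for the fixed data $(s;r_1,d_1;\dots;r_s,d_s)$, the set $\UU_{\PP^1}(s;r_1,d_1;\dots;r_s,d_s)$ consists of the single bundle $\Ee$ of (\ref{p1b}): by Grothendieck's splitting theorem a vector bundle on $\PP^1$ is determined by its splitting type, and the prescribed Harder-Narasimhan graduation fixes the integers $b_i=d_i/r_i$ together with their multiplicities $r_i$. Hence a co-Higgs structure on a member of $\UU_{\PP^1}$ is nothing but a choice of field $\Phi\in\Hom(\Ee,\Ee\otimes\Tt_{\Dd})=\Hom(\Ee,\Ee(\gamma))$ subject to integrability, and the ambient object is already a vector space.

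Next I would dispose of the integrability condition. Because $X=\PP^1$ is a curve, $\Tt_{\Dd}$ is a line bundle and $\wedge^2\Tt_{\Dd}=0$, so $\Phi\wedge\Phi$ lies in $\Hom(\Ee,\Ee\otimes\wedge^2\Tt_{\Dd})=0$ and vanishes for every $\Phi$. Equivalently, the hypothesis $b_s-b_1\ge 2\gamma$ forces the iterate $\Phi^{(2)}\colon\Ee\to\Ee(2\gamma)$ to vanish, since a nonzero component $\Oo(b_i)\to\Oo(b_j+2\gamma)$ would require $b_i-b_j\le 2\gamma$, impossible once $b_i-b_j\ge b_s-b_1>2\gamma$. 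Thus no $\Phi$ is excluded, and the set of co-Higgs structures coincides with all of $\Hom(\Ee,\Ee(\gamma))$.

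Finally I would invoke the decomposition $\mathcal{H}om(\Ee,\Ee(\gamma))\cong\bigoplus_{i,j}\Oo(b_j-b_i+\gamma)^{\oplus r_ir_j}$ together with $h^0(\Oo_{\PP^1}(k))=\max\{0,k+1\}$ to recover $h^0(\mathcal{H}om(\Ee,\Ee(\gamma)))=\Delta$, exactly as in the computation preceding the statement; the remaining role of the hypothesis $b_s-b_1<\gamma$ is to guarantee $\Delta>0$, placing us in the nontrivial regime where co-Higgs fields actually exist. I do not expect a genuine obstacle here: the only point requiring care is the automatic vanishing of $\Phi\wedge\Phi$, and even at the boundary $b_s-b_1=2\gamma$, where the $2$-nilpotency argument degenerates, integrability survives for the purely dimensional reason that $\wedge^2$ of a line bundle is zero.
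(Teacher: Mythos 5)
Your proof is correct, and its skeleton matches the paper's: the bundle with the given splitting data is the unique point of $\UU_{\PP^1}(s;r_1,d_1;\dots;r_s,d_s)$, so the set of co-Higgs structures is all of $\Hom(\Ee,\Ee(\gamma))$ once integrability is seen to be vacuous, and its dimension is the $h^0$ computed just before the statement. Where you genuinely differ is the integrability step. The paper derives it from $2$-nilpotency: the paragraph preceding the proposition shows $\Phi^{(2)}=0$ whenever $b_1+2\gamma<b_s\le b_1+\gamma$, and the proposition is presented as a consequence of that paragraph. You instead argue that $\Phi\wedge\Phi$ lies in $\Hom(\Ee,\Ee\otimes\wedge^2\Tt_{\Dd})=0$ because $\Tt_{\Dd}$ is a line bundle --- a fact the paper records at the start of \S \ref{curve} but does not invoke here. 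Your route buys something concrete: the proposition's hypothesis $2\gamma\le b_s-b_1<\gamma$ admits the boundary case $b_s-b_1=2\gamma$, where $2$-nilpotency can genuinely fail; for the splitting type $(b_1,b_2,b_3)=(0,\gamma,2\gamma)$, a field whose nonzero components are isomorphisms $\Oo_{\PP^1}(2\gamma)\to\Oo_{\PP^1}(\gamma)\otimes\Tt_{\Dd}$ and $\Oo_{\PP^1}(\gamma)\to\Oo_{\PP^1}\otimes\Tt_{\Dd}$ has $\Phi^{(2)}\ne 0$. So the paper's nilpotency argument, which needs the strict inequality $2\gamma<b_s-b_1$, does not literally cover the stated hypothesis, while your $\wedge^2$ argument does, as you yourself note. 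One caveat: carried out carefully, your decomposition gives $h^0(\mathcal{H}om(\Ee,\Ee(\gamma)))=\sum_{1\le i<j\le s}r_ir_j\max\{0,\gamma+1+b_i-b_j\}$, which contains the multiplicities $r_ir_j$ that the paper's printed formula for $\Delta$ omits; your computation therefore corrects a typo in $\Delta$ rather than literally recovering it (the two expressions agree only when every $r_i=1$).
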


Now the assumption in (\ref{bseq}) is simply $b_1+\gamma \ge b_s$ and let $e$ be the last integer $i$ such that $b_i >\gamma+b_1$. Then we may write
\begin{equation}\label{dec}
\Ee \cong \Ee _+\oplus \Ee _-~,~\text{ with }\Ee _+ \cong\oplus _{i=1}^{e} \Oo _{\PP^1}(b_i)^{\oplus r_i} \text{ and }\Ee _- \cong \oplus _{i=e+1}^{r} \Oo _{\PP^1}(b_i)^{\oplus r_i}.
\end{equation}
It is possible to have $e=0$ and so $\Ee_+$ is trivial. Then we have $H^0(\mathcal{E}nd (\Ee )(\gamma)) =H^0(\mathcal{H}om (\Ee_-,\Ee )(\gamma))$. Thus in case of $\PP^1$ we may rephrase our question in the set-up of holomorphic triples $(\Ee_1, \Ee_2, f)$ with $\Ee _1= \Ee _-$, $\Ee _2 = \Ee (\gamma)$ and $f: \Ee_1 \rightarrow \Ee_2$. Here, $\Ee _1$ and $\Ee _2$ are related in a sense that $\Ee _1$ is a twist of a factor of $\Ee _2$. So our general problem concerning nonzero maps $\Phi: \Ee \rightarrow\Ee (\gamma)$ is equivalent to a problem about nonzero maps $\Phi: \Ee _- \rightarrow \Ee (\gamma)$.


\subsection{Elliptic curves}
Let $X$ be an elliptic curve and use the classification of vector bundles on elliptic curves due to M.~Atiyah in \cite{A}. We have $\Tt_{\Dd} \cong \Oo _X(-\Dd)$.

\begin{proposition}\label{e2}
Fix an integer $s\ge 2$ and consider $\UU:=\UU_X (s;r_1, d_1; \dots; r_s, d_s)$ with $d_s/r_s \le d_1/r_1 +\gamma$.
\begin{itemize}
\item [(i)] There exists $[\Ee] \in \UU$ with $\Hom (\Ee ,\Ee (-\Dd )) \ne 0$.
\item [(ii)] If $d_s/r_s = d_1/r_1+\gamma$, there is $[\Ee] \in \UU$ with $\Hom (\Ee ,\Ee (-\Dd )) =0$.
\item [(iii)] If $d_s/r_s < d_1/r_1+\gamma$, then we have $\Hom (\Ee ,\Ee (-\Dd )) \ne 0$ for all $[\Ee] \in \UU$.
\item [(iv)] If $e$ is the maximal integer such that $d_s/r_s \le d_1/r_1+e\gamma$, then we have $\Phi ^{(e+1)} =0$ for every $[\Ee] \in \UU$ and $\Phi \in \Hom (\Ee ,\Ee (-\Dd )) =0$.
\end{itemize}
\end{proposition}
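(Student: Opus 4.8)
The plan is to push everything onto the two extreme graded pieces of the Harder--Narasimhan filtration and to invoke Atiyah's classification \cite{A} only at the two existence statements (i) and (ii). Throughout I write $G_1:=\Ff_1$ and $G_s:=\Ff_s/\Ff_{s-1}$ for the top sub and bottom quotient of $\Ee$, and $\mu_+:=d_1/r_1$, $\mu_-:=d_s/r_s$. On an elliptic curve $K_X\cong\Oo_X$, so $\chi(\Aa)=\deg\Aa$ and $\Hom(\Aa,\Bb)=H^0(\Aa^\vee\otimes\Bb)$ for all bundles, and $\Tt_{\Dd}^\vee\cong\Oo_X(\Dd)$. I would first dispose of (iv) and (iii), which are pure slope bookkeeping. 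For (iv), note $\Phi^{(e+1)}$ is a map $\Ee\to\Ee\otimes\Tt_{\Dd}^{\otimes(e+1)}$ whose target has $\mu_+=\mu_++(e+1)\gamma$ by Remark~\ref{aaa0}, while $\mu_-(\Ee)=\mu_-$; since $e$ is maximal with $\mu_-\le\mu_++e\gamma$ and $\gamma<0$, we have $\mu_->\mu_++(e+1)\gamma$, so Lemma~\ref{aaa1} forces $\Phi^{(e+1)}=0$. For (iii), the strict inequality $\mu_-<\mu_++\gamma$ gives, for every $\Ee\in\UU$ and its pieces $G_s,G_1$, that $G_1\otimes\Tt_{\Dd}$ is semistable of slope $\mu_++\gamma>\mu_-=\mu(G_s)$; hence $\chi(G_s^\vee\otimes G_1\otimes\Tt_{\Dd})=r_1r_s(\mu_++\gamma-\mu_-)>0$, so $\Hom(G_s,G_1\otimes\Tt_{\Dd})\neq0$, and composing a nonzero such map with $\Ee\twoheadrightarrow G_s$ and $G_1\otimes\Tt_{\Dd}\hookrightarrow\Ee\otimes\Tt_{\Dd}$ produces a nonzero (indeed $2$-nilpotent) $\Phi$ as in Remark~\ref{p1}.

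The heart of (i) and (ii) is the boundary case $\mu_-=\mu_++\gamma$, where I claim the evaluation $\Phi\mapsto\bar\Phi$ defines an injection $\Hom(\Ee,\Ee\otimes\Tt_{\Dd})\hookrightarrow\Hom(G_s,G_1\otimes\Tt_{\Dd})$ for every $\Ee\in\UU$. Indeed $\Ff_{s-1}$ has minimal slope $\mu_{s-1}>\mu_-=\mu_++\gamma=\mu_+(\Ee\otimes\Tt_{\Dd})$, so Lemma~\ref{aaa1} gives $\Phi|_{\Ff_{s-1}}=0$ and $\Phi$ factors through $G_s$; applying Lemma~\ref{aaa1} once more to the composite $G_s\to(\Ee\otimes\Tt_{\Dd})/(\Ff_1\otimes\Tt_{\Dd})$, whose target has maximal slope $\mu_2+\gamma<\mu_-$, shows the image lies in $\Ff_1\otimes\Tt_{\Dd}$. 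Thus $\Phi$ is recovered from $\bar\Phi\in\Hom(G_s,G_1\otimes\Tt_{\Dd})$, proving injectivity. Now (ii) follows: writing $\mu_-=p/q$ in lowest terms (so $q\mid r_1$ and $q\mid r_s$, as $r_1,r_s$ are ranks of semistable bundles of slope $\mu_-$), pick by \cite{A} a stable $V$ of rank $q$ and slope $\mu_-$ and a nontrivial $L\in\Pic^0(X)$ with $V\otimes L\not\cong V$, and set $G_s:=V^{\oplus r_s/q}$, $W:=(V\otimes L)^{\oplus r_1/q}$, $G_1:=W\otimes\Tt_{\Dd}^\vee$. Then $\Hom(G_s,W)\cong\Hom(V,V\otimes L)^{\oplus r_1r_s/q^2}=0$, since non-isomorphic stable bundles of equal slope admit no nonzero map, so any $\Ee\in\UU$ with these two extreme pieces (e.g. the split bundle $\oplus_iG_i$ with middle pieces chosen semistable of the prescribed data) has $\Hom(\Ee,\Ee\otimes\Tt_{\Dd})=0$.

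For (i), if the inequality $\mu_-\le\mu_++\gamma$ is strict we are done by (iii); in the boundary case I run the same construction with a \emph{common} stable factor, taking $G_s:=V^{\oplus r_s/q}$ and $W:=V^{\oplus r_1/q}$ from the same $V$, so that $\Hom(G_s,W)\supseteq\Hom(V,V)\neq0$, setting $G_1:=W\otimes\Tt_{\Dd}^\vee$ and $\Ee:=\oplus_iG_i$. A nonzero element of $\Hom(G_s,G_1\otimes\Tt_{\Dd})$ then yields a nonzero $\Phi$ exactly as in (iii). In each construction one checks directly that $\deg G_s=d_s$, $\deg G_1=d_1$ and that the partial-sum filtration of $\oplus_iG_i$ realizes the prescribed Harder--Narasimhan data, so $[\Ee]\in\UU$.

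The main obstacle is the structural injection in the equality case: one must verify that the Harder--Narasimhan slopes leave $(s,1)$ as the \emph{only} admissible pair of graded pieces. This is exactly where the semistability of $\Tt_{\Dd}$ (automatic on a curve) and the identity $\mu_+(\Ee\otimes\Tt_{\Dd}^{\otimes i})=\mu_++i\gamma$ from Remark~\ref{aaa0} are used, so that Lemma~\ref{aaa1} rules out every other pair. Once this is in place, (i) and (ii) reduce to producing two semistable bundles of the common slope $\mu_-$ that do, respectively do not, share a stable constituent, which is precisely what Atiyah's classification supplies.
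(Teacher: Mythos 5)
Your proof is correct and takes essentially the same route as the paper's: both reduce everything to maps between the two extreme Harder--Narasimhan pieces via slope arguments, prove (iii) and (iv) by Riemann--Roch and slope bookkeeping on the elliptic curve, and settle the boundary cases of (i) and (ii) by using Atiyah's classification to build polystable extreme pieces that respectively do and do not share a stable constituent after twisting by $\Oo_X(\Dd)$. Your explicit $V$ versus $V\otimes L$ construction and the structural injection $\Hom(\Ee,\Ee\otimes\Tt_{\Dd})\hookrightarrow\Hom(\Ff_s/\Ff_{s-1},\Ff_1\otimes\Tt_{\Dd})$ are just slightly more explicit packagings of the paper's choices, namely ``no factor of $\Ee_1(-\Dd)$ isomorphic to a factor of $\Ee_s$'' and the slope-forced vanishing of all other $\Hom(\Ee_i,\Ee_j(-\Dd))$.
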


\begin{proof}
Take $[\Ee] \in \UU$ and set $\Ee_s:= \Ff _s/\Ff_{s-1}$. In the set-up of part ({iii}) we have
$\mu (\Ee _s^\vee \otimes \Ff _1(-\Dd) ) >0$ and so Riemann-Roch gives $h^0(\Ee _s^\vee \otimes \Ff _1 (-\Dd)) >0$. Take as $\Phi$ the composition of the surjection $\Ee \rightarrow \Ee _s$ with a nonzero map $\Ee _s \rightarrow \Ff _1(-\Dd)$ and then the inclusion $\Ff _1(-\Dd)\hookrightarrow \Ee(-\Dd)$, proving (iii). 

Now assume $d_s/r_s = d_1/r_1+\gamma$. Take as $\Ee _i$ any semistable bundle with prescribed numeric data so that $\Ee _1$ and $\Ee_s$ are polystable and no factor of $\Ee _1(-\Dd )$ is isomorphic to a factor of $\Ee _s$. Set $\Ee:= \oplus _{i=1}^{s} \Ee _i$. Due to the slope, we have $\Hom (\Ee _i,\Ee _j(-\Dd )) =0$ if $(i,j) \ne (s,1)$. Since every nonzero map between stable bundles with the same slope is an isomorphism, we have $\Hom (\Ee _s,\Ee _1(-\Dd)) =0$ and so $\Hom (\Ee ,\Ee (-\Dd )) =0$, proving part (ii). 

Under the same situation, set $t:=\gcd (|d_s|, r_s)$ and write $r_s=at$ and $d_s=bt$. Then each indecomposable factor of $\Ee_s$ has rank $a$ and degree $b$, which is also stable. Pick one of these indecomposable factors, say $\Aa$. Now from $d_s/r_s=d_1/r_1+\gamma$, we see that $a$ divides $r_1$. Then we have $r_1/a\in \ZZ$ and it also divides $d_1$, say $r_1=ap$ and $d_1=qp$. We also see that $\gcd(a,q)=\gcd(a,b)=1$ and so $\Ee_1$ is a polystable bundle whose factors have rank $a$ and degree $q=b-a\gamma$. Let $\Gg$ be any polystable vector bundle of rank $r_1$ and degree $d_1$ with $\Aa\otimes \Oo_X(\Dd)$ as one of its factors. Set $\Ff := \Gg\oplus \left(\oplus _{i=2}^{s} \Ee _i\right)$ and then we have $[\Ff] \in \UU$. Since $\Hom (\Ee _s,\Gg (-\Dd)) \ne 0$, we have $\Hom (\Ff ,\Ff (-\Dd)) \ne 0$, proving part (i).

Part (iv) is obvious.
\end{proof}

\begin{remark}\label{e3}
In parts (i) and ({iii}) of Proposition \ref{e2} the proof gives the existence of a nonzero $2$-nilpotent co-Higgs field $\Phi$.
\end{remark}


\subsection{Higher genus case}
Assume that $X$ has genus $g\ge 2$. Note that $\gamma \le 2-2g$. For the pairs of integers $(r,d)$ with $r>0$, denote by $\MM _X(r,d)$ the moduli space of the stable vector bundles of rank $r$ on $X$ with degree $d$. It is known to be a non-empty, smooth and irreducible quasi-projective variety of dimension $r^2(g-1) +1$. 

Fix a point $p\in X$ and take any exact sequence on $X$:
\begin{equation}\label{eqg1}
0 \to \Aa \stackrel{u}{\to} \Bb \to \CC _p\to 0
\end{equation}
with $\Aa$ and $\Bb$ locally free. Note that $\mathrm{rk}(\Aa )=\mathrm{rk}(\Bb )$ and that $\deg \Bb =\deg \Aa +1$. Then we say that $\Bb$ is obtained from $\Aa$ by applying {\it a positive elementary transformation at $p$} and that $\Aa$ is obtained from $\Bb$ by applying {\it a negative elementary transformation at $p$}. For a fixed $\Aa$ (resp. $\Bb$) the set of all extensions (\ref{eqg1}) is parametrized by a vector space of dimension $\mathrm{rk} (\Aa)$ (resp. $\mathrm{rk}(\Bb)$); since it is an irreducible variety, we may speak about the general positive elementary transformation of $\Aa$ (resp. a general negative elementary transformation of $\Bb$).

\begin{lemma}\label{g1}
For $(r,d,k)\in \ZZ^{\oplus 3}$ with $r,k>0$, fix a general bundle $[\Aa]\in \mathbb{M}_X(r,d)$. If $\Bb$ is obtained from $\Aa$ by applying $k$ positive elementary transformations, then it is stable.
\end{lemma}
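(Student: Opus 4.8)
The plan is to show that a destabilizing subbundle of $\Bb$ would force $\Aa$ to contain an abnormally positive subbundle, contradicting the genericity of $[\Aa]$. Throughout I regard $\Bb$ as a \emph{general} $k$-fold positive elementary transformation of $\Aa$: writing the composite of the $k$ steps as a single inclusion $\Aa \hookrightarrow \Bb$ with $\Bb/\Aa$ torsion of length $k$ (supported, for a general choice, at $k$ distinct points $p_1,\dots,p_k$ with a chosen direction $v_j\in\PP(\Aa_{p_j})$ at each), one has $\mathrm{rk}(\Bb)=r$ and $\deg\Bb=d+k$, so $\mu(\Bb)=(d+k)/r$. Genericity of the transformation is essential here: concentrating several steps along a single line subbundle already produces non-stable $\Bb$, so the statement must be read as concerning the general transformation, which the preceding discussion legitimizes.

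First I would record the input coming from genericity of $[\Aa]$. By the generic maximality of the Segre invariants of a general stable bundle -- part of the circle of results around the Lange conjecture invoked in the introduction \cite{RT} -- every rank-$\rho$ subbundle $\Ff\subset\Aa$ with $0<\rho<r$ satisfies $\rho d - r\deg\Ff \ge \rho(r-\rho)(g-1)$, i.e.
$$\deg\Ff \le \tfrac{\rho}{r}\bigl(d-(r-\rho)(g-1)\bigr) < \tfrac{\rho d}{r}.$$
This gives a definite gap below $\mu(\Aa)$ for subbundles of $\Aa$ of every intermediate rank.

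Next, suppose for contradiction that $\Bb$ is not stable and let $\Ss\subset\Bb$ be a saturated destabilizing subbundle of rank $\rho$, so that $\mu(\Ss)\ge\mu(\Bb)$ and $0<\rho<r$. Intersecting with $\Aa$ inside $\Bb$, put $\Nn:=\Ss\cap\Aa$ and let $\bar\Nn$ be its saturation in $\Aa$, again of rank $\rho$. The quotient $\Ss/\Nn$ embeds into $\Bb/\Aa$ and is therefore torsion of some length $\ell\le k$, whence $\deg\Ss \le \deg\bar\Nn + \ell$. If $\Ss\subseteq\Aa$ then $\ell=0$ and $\mu(\Ss)\le\mu(\bar\Nn)<\mu(\Aa)<\mu(\Bb)$, a contradiction; so $\Ss\not\subseteq\Aa$ and $\Ss$ genuinely meets the torsion. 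The crux is to bound $\ell$: a single step contributes to $\Ss/\Nn$ at $p_j$ only when the chosen direction $v_j$ lies in the fibre $\bar\Nn_{p_j}\subsetneq\Aa_{p_j}$, a condition of codimension $r-\rho\ge1$ in $\PP(\Aa_{p_j})$.

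The main obstacle, and the step I would spend the most care on, is turning this fibrewise transversality into a statement valid for \emph{all} potentially destabilizing $\Ss$ at once. Here I would invoke boundedness: a subbundle $\bar\Nn$ can matter only if $\deg\bar\Nn \ge \rho\mu(\Bb)-k$, and combined with the Segre upper bound this confines the relevant $\bar\Nn$ to finitely many bounded families (Grothendieck's Quot scheme). For each such family the locus of transformation data $(p_j,v_j)$ for which some member's fibre contains some $v_j$ is a proper closed condition; avoiding the finite union of these loci, a general transformation forces $\ell=0$ for every relevant $\bar\Nn$, so that $\deg\Ss\le\deg\bar\Nn<\tfrac{\rho d}{r}<\tfrac{\rho(d+k)}{r}=\rho\mu(\Bb)$ and $\mu(\Ss)<\mu(\Bb)$, contradicting the choice of $\Ss$. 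Hence no destabilizing subbundle exists and $\Bb$ is stable. An alternative packaging is induction on $k$: prove the case $k=1$ (where $\ell\le1$ and only the transversality for a single general direction is needed) while showing simultaneously that the general transform is again general in $\MM_X(r,d+1)$, restoring the hypothesis for the next step; this trades the boundedness bookkeeping for a dominance and dimension count between the two moduli spaces.
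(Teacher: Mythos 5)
Your reading of the statement (general transformations, with the Segre--invariant bound $\rho d-r\deg \Ff \ge \rho (r-\rho )(g-1)$ for a general $[\Aa ]\in \MM _X(r,d)$ as the genericity input) matches the paper's, and your reduction is sound: a destabilizing saturated $\Ss \subset \Bb$ gives $\deg \Ss \le \deg \overline{\Nn}+\ell$, where $\overline{\Nn}\subset \Aa$ is the saturation of $\Ss \cap \Aa$ and $\ell$ counts the transformation steps with $v_j\in \PP (\overline{\Nn}_{p_j})$. The gap is in the step you yourself call the crux. The claim that for each bounded family of relevant subbundles ``the locus of transformation data for which some member's fibre contains some $v_j$ is a proper closed condition'', hence that general data force $\ell =0$ for \emph{every} relevant $\overline{\Nn}$, is false as soon as those families are positive-dimensional, and that is exactly the regime where the lemma has content: relevant rank-$\rho$ subbundles occur only for $k\ge \rho (g-1)$, and for slightly larger $k$ your degree window admits degrees strictly below the maximal one, where a general $\Aa$ carries families of subbundles of dimension $\rho d-ra-\rho (r-\rho )(g-1)>0$ (this is part of the same Lange-conjecture circle you invoke). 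For such a family the union of fibres sweeps out everything. Concretely, for $r=2$ let $Z$ be a positive-dimensional irreducible family of line subbundles of degree $a$: if the locus $\{(p,v): v\in \PP (\Ll _p)\ \text{for some}\ \Ll \in Z\}$ were contained in a proper closed subset of the total space of $\PP (\Aa )$, then for general $p$ all members of $Z$ would share the same fibre at $p$; two distinct members $\Ll _1\ne \Ll _2$ would then agree fibrewise at infinitely many points, forcing the nonzero (indeed injective) composite $\Ll _1\hookrightarrow \Aa \rightarrow \Aa /\Ll _2$, a map of line bundles, to vanish at infinitely many points, which is absurd. So for \emph{every} choice of data some relevant $\overline{\Nn}$ satisfies $\ell \ge 1$, and your final inequality chain collapses.

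What the argument actually needs is a finer count that you never make: to destabilize, a \emph{single} $\overline{\Nn}$ of degree $a$ must absorb $\ell \ge \rho (d+k)/r-a$ hits, each hit imposing $r-\rho$ independent conditions on the data, so the bad locus attached to the degree-$a$ stratum has codimension at least $(r-\rho )\bigl(\rho (d+k)/r-a\bigr)-\bigl(\rho d-ra-\rho (r-\rho )(g-1)\bigr)$, and for relevant $a$ this is $\ge \rho (r-\rho )(g-1)>0$; summing over the finitely many strata then closes the proof. But this repair requires an upper bound on the dimension of \emph{all} relevant Quot schemes of a fixed general bundle, not just the maximal-subbundle bound you quote --- genuine additional input, not bookkeeping. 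The paper avoids the issue entirely by the route you relegate to your last sentence: it proves the case $k=1$ directly (for $g\ge 3$ the Segre bound alone makes \emph{any} single transform stable, since $\mu (\Bb )-\mu (\Gg _t)\ge (r-t)(g-2)/rt$; for $g=2$ the equality case forces $\epsilon =0$ and $\overline{\Nn}$ maximal, where the maximal subbundles of a general $\Aa$ are \emph{finite} in number by \cite{ox}, \cite{tb}, so a general direction misses their fibres), and then iterates $k$ times, using that general negative elementary transformations of stable bundles are also stable so that the general positive transform of a general bundle is again a general point of $\MM _X(r,d+1)$. You mention this induction as an ``alternative packaging'' but prove neither the $g=2$ case nor the propagation of generality, so as written neither your main line nor the alternative establishes the lemma.
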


\begin{proof}
Since the statement is trivial for $r=1$, we may assume $r\ge 2$. 

\quad (a) First assume $k=1$ with the sequence (\ref{eqg1}) and that $\Bb$ is not stable so that there exists a subbundle $\Gg_t\subset \Bb$ of rank $t\in \{1,\dots ,r-1\}$ with $\deg \Gg _t/t\ge (d+1)/r$. Let $\Cc\subset \Aa$ be the saturation of $u^{-1}(\Gg)$ and set $a:= \deg \Cc $. Then we have
$a\ge \deg u^{-1}(\Gg ) \ge \deg \Gg -1$. Since $\Aa$ is general, we get by \cite[Theorem 3.10]{l2} or \cite[Theorem 2]{b} that $\mu (\Aa/\Cc ) -\mu (\Cc )\ge g-1$, from which we get 
$$\frac{d}{r} -\frac{a}{t} \ge \frac{(r-t)(g-1)}{r}.$$
Using this with $\deg (\Gg_t) \le a+1$, we get
\begin{align*}
\mu(\Bb)-\mu(\Gg_t)&\ge \frac{d+1}{r} -\frac{a+1}{t} \\
&\ge\frac{t-r +(r-t)(g-1)}{rt}\\
&=\frac{(r-t)(g-2)}{rt}\ge 0,
\end{align*}
The equality holds if and only if $g=2$ and $\deg \Gg_t=a+1$. Let $\widetilde{a}$ be the maximal degree of a rank $t$ subbundle of $\Aa$. For arbitrary $t$ and $g$, Mukai and Sakai proved in \cite{ms} that $td-\widetilde{a}r \le t(r-t)g$, while the quoted results also said that $td-\widetilde{a}r\ge t(r-t)(g-1)$. The precise value of $\widetilde{a}$ is known by an unpublished result of A. Hirschowitz in \cite{Hi} and \cite[Remark 3.14]{l2}, which says that $td-\widetilde{a}r = t(r-t)(g-1)+\epsilon$, where $\epsilon $ is the only integer such that $0\le \epsilon < r$ and $\epsilon +t(r-t)(g-1) \equiv td \pmod{r}$.

Now assume $g=2$. We conclude unless $\epsilon = 0$, $a =\widetilde{a}$ and $\deg \Gg _t =a+1$. In this case we use that we take a general positive elementary transformation of $\Aa$. Since $\epsilon =0$ and $\Aa$ is general, $\Aa$ has only finitely many rank $t$ subbundles of maximal degree $a=\widetilde{a}$, say $\Nn _i$ for $1\le i \le \delta$; see \cite{ox} and \cite{tb}. The fiber $\Nn _{i|\{p\}}$ of $\Nn _i$ at $p$ is a $t$-dimensional linear subspace of the fiber $\Aa _{|\{p\}}$ of $\Aa$ at $p$, which is an $r$-dimensional vector space. The union of these $t$-dimensional linear subspaces $\Nn _i$ for $1\le i \le \delta$, is a proper subset of $\Aa _{|\{p\}}$. Thus, for a general positive elementary transformation $\Bb$ of $\Aa$ at $p$, the saturation $\Mm _i$ of $\Nn _i$ is just $\Nn _i$ for all $i$, i.e. $\deg u^{-1}(\Mm _i) = \deg \Mm _i$ for all $i$, contradicting the assumptions $a =\widetilde{a}$ and $\deg \Gg _t =a+1$.

\quad (b) Now assume $k\ge 2$. The case $k=1$ proves that a general positive elementary transformation of a stable bundle is stable. Similarly a general negative transformation of a stable
bundle is also stable, and so we may apply the step (a) $k$ times to get the assertion. 
\end{proof}

\begin{theorem}\label{g4}
Fix an integer $s\ge 2$ and consider $\UU:=\UU_X (s;r_1, d_1; \dots; r_s, d_s)$ with $d_s/r_s \le d_1/r_1 +\gamma$.
\begin{itemize}
\item [(i)] There exists $[\Ee] \in \UU$ with $\Hom (\Ee ,\Ee\otimes \Tt_{\Dd}) \ne 0$.
\item [(ii)] If $d_s/r_s \ge  d_1/r_1+\gamma +1-g$, there is $[\Ee] \in \UU$ with $\Hom (\Ee ,\Ee\otimes \Tt_{\Dd}) =0$.
\item [(iii)] If $d_s/r_s < d_1/r_1+\gamma +1-g$, then $\Hom (\Ee ,\Ee \otimes \Tt_{\Dd}) \ne 0$ for all $[\Ee] \in \UU$.
\end{itemize}
\end{theorem}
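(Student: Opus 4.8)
The plan is to reduce everything to the study of homomorphisms between the semistable subquotients $G_i:=\Ff_i/\Ff_{i-1}$ of the Harder--Narasimhan filtration and their twists by $\Tt_\Dd$. Since $\Tt_\Dd$ is a line bundle and each $G_i$ is semistable, the sheaf $G_j^\vee\otimes G_i\otimes\Tt_\Dd$ is again semistable, of slope $\mu_i-\mu_j+\gamma$ and, by Riemann--Roch, of Euler characteristic $\chi=r_ir_j(\mu_i-\mu_j+\gamma+1-g)$. The dictionary is that a nonzero element of $\Hom(G_s,\Ff_1\otimes\Tt_\Dd)$, composed with the quotient $\Ee\twoheadrightarrow G_s$ and the inclusion $\Ff_1\otimes\Tt_\Dd\hookrightarrow\Ee\otimes\Tt_\Dd$, produces a nonzero ($2$-nilpotent) co-Higgs field as in Remark \ref{p1}, while for the vanishing statement I will exploit the splitting $\Hom\bigl(\bigoplus_j G_j,(\bigoplus_i G_i)\otimes\Tt_\Dd\bigr)=\bigoplus_{i,j}\Hom(G_j,G_i\otimes\Tt_\Dd)$. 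For (iii) this already suffices: the hypothesis $\mu_s<\mu_1+\gamma+1-g$ says exactly that $\chi(G_s^\vee\otimes\Ff_1\otimes\Tt_\Dd)>0$, so $h^0>0$ and $\Hom(\Ff_s/\Ff_{s-1},\Ff_1\otimes\Tt_\Dd)\neq 0$ for \emph{every} $\Ee\in\UU$, independently of which semistable bundles occur; composing as above gives a nonzero co-Higgs field on each such $\Ee$.

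For (i) I will produce a single $\Ee=\bigoplus_i G_i$, with general stable $G_i$ (so that $\bigoplus_i G_i\in\UU$ with the prescribed filtration), carrying a nonzero map $G_s\to\Ff_1\otimes\Tt_\Dd$. When $r_1=r_s$ the bound $\mu_s\le\mu_1+\gamma$ reads $d_s\le d_1+r_1\gamma$, so $\Ff_1\otimes\Tt_\Dd$ is obtained from a general stable $G_s$ by $k=d_1+r_1\gamma-d_s\ge 0$ positive elementary transformations; Lemma \ref{g1} guarantees that the result is stable, and the defining inclusion $G_s\hookrightarrow\Ff_1\otimes\Tt_\Dd$ is the required nonzero map. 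When $r_1\neq r_s$ I instead realize $G_s$ (or, after dualizing, $(\Ff_1\otimes\Tt_\Dd)^\vee$) as a subbundle of a stable bundle of the prescribed rank and degree with stable quotient, which is precisely the positive answer to the Lange conjecture \cite{RT}; in the boundary case $\mu_s=\mu_1+\gamma$ one arranges instead that $G_s$ be a Jordan--H\"older factor of the polystable $\Ff_1\otimes\Tt_\Dd$, exactly as in Proposition \ref{e2}(i). In every case Remark \ref{p1} converts the map into a nonzero co-Higgs field.

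For (ii) I again take $\Ee=\bigoplus_i G_i$ with the $G_i$ \emph{general} stable and must force every summand $\Hom(G_j,G_i\otimes\Tt_\Dd)$ to vanish. By Lemma \ref{aaa1} only pairs $i<j$ with $\mu_j\le\mu_i+\gamma$ can contribute, since otherwise the target has strictly larger maximal slope violated. For a contributing pair, monotonicity of the filtration gives $\mu_i-\mu_j\le\mu_1-\mu_s$, and the hypothesis $\mu_s\ge\mu_1+\gamma+1-g$ rearranges to $\mu_1-\mu_s\le g-1-\gamma$; hence $\chi(G_j^\vee\otimes G_i\otimes\Tt_\Dd)\le 0$ for every such pair, so no Hom is \emph{forced} to be nonzero. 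It then remains to make the actual $h^0$ vanish, for which I use that for general stable bundles the dimension of $\Hom$ attains its expected value $\max(0,\chi)$, so $\chi\le 0$ yields $\Hom=0$. As there are only finitely many pairs, the loci in $\prod_i\MM_X(r_i,d_i)$ where some $\Hom$ is nonzero are proper closed subsets of an irreducible variety, so a common general choice of the $G_i$ annihilates all of them at once.

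The main obstacle is the generic vanishing invoked in (ii): the sheaf $G_j^\vee\otimes G_i\otimes\Tt_\Dd$ is a tensor product, not a generic stable bundle, so $h^0=\max(0,\chi)$ is not automatic and must be extracted from the genericity of the factors, again through the positive answer to the Lange conjecture. The unequal-rank construction in (i) is the second delicate point, for the same reason; the equality case $\mu_s=\mu_1+\gamma$ there requires the separate Jordan--H\"older argument, since strict slope inequality is what ordinarily powers the elementary-transformation embedding.
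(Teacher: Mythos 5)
Your proposal is correct and follows essentially the same approach as the paper: Riemann--Roch on the semistable bundle $(\Ff_s/\Ff_{s-1})^\vee\otimes\Ff_1\otimes\Tt_{\Dd}$ for (iii), a general direct sum of stable bundles plus generic vanishing of the pairwise $\Hom$ groups for (ii), and, for (i), elementary transformations with Lemma~\ref{g1} in the equal-rank case, the Lange conjecture in the unequal-rank case, and a direct-sum/polystable construction in the boundary case $\mu_s=\mu_1+\gamma$. The only nuance is that the generic-vanishing input in (ii) is Hirschowitz's theorem, quoted in the paper as \cite[Theorem 1.2]{RT}, rather than the Lange conjecture itself, though both come from the same reference.
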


\begin{proof}
Take $[\Ee] \in \UU$ and set $\Ee_s:= \Ff _s/\Ff_{s-1}$. Since $\Ee _s$ and $\Ff _1$ are semistable, the bundle $\Ee _s^\vee \otimes \Ff _1 (-D)$ is also semistable. In the set-up of part ({iii}) we have $\mu (\Ee _s^\vee \otimes \Ff _1\otimes \Tt_{\Dd}) >g-1$ and so Riemann-Roch gives $h^0(\Ee _s^\vee \otimes \Ff _1 \otimes \Tt_{\Dd}) >0$. Take as $\Phi$ the composition of the
surjection $\Ee \rightarrow \Ee _s$ with a nonzero map $\Ee _s \rightarrow \Ff _1\otimes \Tt_{\Dd}$ and then the inclusion $\Ff _1\otimes \Tt_{\Dd}\hookrightarrow \Ee\otimes \Tt_{\Dd}$, proving (iii). 

Now assume the set-up of (ii) and pick a general element $(\Ee _1,\dots ,\Ee _s)$ in 
$$\MM _X(r_1,d_1)\times \cdots \times \MM _X(r_s,d_s).$$
In particular, each $\Ee _i$ is a general stable bundle in $\MM_X(r_i, d_i)$. Set $\Ee := \oplus _{i=1}^{s} \Ee _i$ and then it is sufficient to prove the following claim for (ii). 

\quad {\emph {Claim 1:}} We have $\Hom (\Ee ,\Ee \otimes \Tt_{\Dd}) =0$.

\quad {\emph {Proof of Claim 1:}} Since $\Ee := \oplus _{i=1}^{s} \Ee _i$, it is enough to prove that $H^0(\Ee _i,\Ee _j\otimes \Tt_{\Dd}) =0$ for all $i, j$. We have $H^0(\Ee _i,\Ee _i\otimes \Tt_{\Dd}) =0$ for each $i$, because $\Ee _i$ is stable and $\gamma <0$. Now assume $i\ne j$. Note that $(E_i^\vee,E_j\otimes \Tt_{\Dd})$ is a general element of 
$$\MM _X(r_i,-d_i)\times \MM _X(r_j,d_j+r_j\gamma ).$$ 
We have $\mu (\Ee _i^\vee\otimes \Ee_j \otimes \Tt_{\Dd}) =-\mu (\Ee _i) +\mu (\Ee _j)+\gamma \le g-1$. By a theorem of A.~Hirschowitz in \cite[Theorem 1.2]{RT}, we have $h^0(\Ee _i^\vee \otimes \Ee_j\otimes \Tt_{\Dd}) =0$, concluding the proof of {\it Claim 1}.

Now we prove part (i). Let $\Bb_i$ be a semistable bundle of rank $r_i$ on $X$ with degree $d_i$ for each $i$, and let $\Ee:=\oplus_{i=1}^s \Bb_i$. Our strategy is to find appropriate $\Bb_1$ and $\Bb_s$ with the additional condition $\Hom (\Bb _s,\Bb _1\otimes \Tt_{\Dd})\ne 0$, which would imply part (i). 

\quad (a) Assume $r_s < r_1$. Setting $r':= r_1-r_s$ and $d':= d_1+\gamma r_1-d_s$, it is enough to show the existence of an exact sequence of vector bundles on $X$:
\begin{equation}\label{eqq1}
0 \to \Aa _1\to \Aa _2\to \Aa _3\to 0,
\end{equation}
with $\Aa _1,\Aa _2$ semistable and $\Aa _1$ of rank $r_s$ and degree $d_s$, $\Aa _2$ of rank $r_1$ and degree $d_1+\gamma r_1$. Then $\Aa_3$ would be of rank $r'$ and degree $d'$, and we may take $\Bb _s:= \Aa _1$ and $\Bb _1:= \Aa _2\otimes \Tt_{\Dd}^\vee$. 

Note that for a quadruple of integers $(x_1, x_2, a_1, a_2)\in \ZZ^{\oplus 4}$ with $x_2>x_1>0$ and $a_1/x_1 \le a_2/x_2$ (resp. $a_1/x_1 < a_2/x_2$), we have
$$\frac{a_2}{x_2} \le \frac{a_2-a_1}{x_2-x_1} ~\left(\text{resp. } ~\frac{a_2}{x_2} < \frac{a_2-a_1}{x_2-x_1} \right).$$
Using the above to $(x_1, x_2, a_1, a_2)=(r_s, r_1, d_s, d_1+\gamma r_1)$, together with 
$$\mu_-(\Ee)=d_s/r_s\le d_1/r_1 + \gamma=\mu_-(\Ee\otimes \Tt_{\Dd}),$$
we have $d_1/r_1+\gamma \le d'/r'$ with equality if and only if $d_s/r_s=d_1/r_1 + \gamma$. When the equality holds, we may take as $\Aa _1$ and $\Aa _3$ arbitrary semistable bundles with the prescribed ranks and degrees and then take $\Aa _2:= \Aa _1\oplus \Aa _3$. Now assume $d_s/r_s< d_1/r_1 +\gamma$ and so $d_1/r_1+\gamma < d'/r'$. In this case by the positive answer to the conjecture of Lange, there is an exact sequence (\ref{eqq1}) of vector bundles on $X$ with the prescribed ranks and degrees and with stable $\Aa _1$, $\Aa _2$ and $\Aa _3$; see \cite[Introduction]{RT}.

\quad (b) Assume $r_s>r_1$. Similarly as in (a) we set $r'':= r_s-r_1$ and $d'':= d_1-\gamma r_1 -d_s$. By taking $\Bb _s:= \Aa _2$ and $\Bb _1:= \Aa _3\otimes \Tt_{\Dd}^\vee$, it is sufficient to find an exact sequence (\ref{eqq1}) with $\Aa _2$ and $\Aa _3$ semistables, $\Aa _1$ of rank $r''$ and degree $d''$, $\Aa _2$ of rank $r_s$ and degree $d_s$ and $\Aa _3$ or rank $r_1$ and degree $d_1+\gamma r_1$. 

First assume $d_s/r_s= d_1/r_1 + \gamma$. In this case we have $d''/r''=d_s/r_s$ and we take as $\Aa_1$ and $\Aa _3$ arbitrary semistable bundles with prescribed ranks and degrees and set $\Aa _2:= \Aa _1\oplus \Aa _3$. Now assume $d_s/r_s<d_1/r_1 + \gamma$ and so $d_1/r_1+\gamma > d''/r''$. Again by the conjecture of Lange proved in \cite{RT} we may take $\Aa _1$, $\Aa _2$, $\Aa _3$ with the prescribed ranks and degree and stable.

\quad (c) Assume $r_s =r_1$. First assume $d_s/r_s=d_1/r_1 +\gamma$, i.e. $d_1 = d_s-\gamma r_1$. In this case we take as $\Bb _s$ any semistable bundle with rank $r_s$ and degree $d_s$ and set $\Bb _1:= \Bb _s\otimes \Tt_{\Dd}$. Now assume $k:= d_1 +r_1\gamma -d_s>0$. We take as $\Bb _s$ a general stable bundle of rank $r_s$ and degree $d_s$. Then $\Bb _s\otimes \Tt_{\Dd}$ is a general element of $\MM _X(r_1,d_1-t)$. We take as $\Bb _1$ a bundle obtained from $\Bb _s\otimes \Tt_{\Dd}$ by applying $k$ general positive elementary transformations.  
\end{proof}

\begin{remark}
Consider a smooth algebraic curve $X$ of an arbitrary genus $g\ge 0$ and assume $s\ge 3$ together with 
$$d_2/r_2+\gamma < d_s/r_s\le d_1/r_1+\gamma.$$
By Theorem \ref{g4} in case $g\ge 2$ and Proposition \ref{e2} for $g=1$, there is $(\Ee ,\Phi )$
with $[\Ee] \in \UU_X (s;r_1,d_1;\dots ;r_s,d_s)$ and a nonzero map $\Phi : \Ee \rightarrow \Ee \otimes \Tt _{\Dd}$. Take any $[\Ee] \in \UU_X (s;r_1,d_1;\dots ;r_s,d_s)$ with the Harder-Narasimhan filtration (\ref{hn}). Note that we have $\Hom (\Aa ,\Bb )=0$ for any semistable bundles $\Aa$ and $\Bb$ with $\mu (\Aa )>\mu (\Bb)$. {\it Claim 1} in the proof of Theorem \ref{g4} applied to $\Ee /\Ff _1$ shows that any map $\Phi : \Ee \rightarrow \Ee \otimes \Tt _{\Dd}$ is uniquely determined by $f: \Ee /\Ff _{s-1} \rightarrow \Ff _1\otimes \Tt_{\Dd}$; moreover we get $\mathrm{Im}(\Phi )=\mathrm{Im}(f)$ and $\mathrm{ker}(\Phi )$ is the inverse image of $\mathrm{ker}(f)$ under the surjection $\Ee \rightarrow \Ee /\Ff _{s-1}$. 

Conversely, for $1\le i \le s$, choose arbitrary semistable bundles $\Ee _i$ with $\mathrm{rk}(\Ee _i) =r_i$ and $\deg \Ee _i =d_i$, and a map $f: \Ee _s\rightarrow  \Ee _1\otimes \Tt _{\Dd}$. To get a vector bundle $[\Ee]\in \UU_X (s;r_1,d_1;\dots ;r_s,d_s)$, we only need to consider $(s-1)$ extension classes
$$0 \to \Ff _i \to \Ff _{i+1}\to \Ee _{i+1}\to 0$$ 
for $i=1,\dots ,s-1$, where $\Ff_1:=\Ee_1$. Once $\Ee$ is chosen, the map $\Phi : \Ee \rightarrow \Ee \otimes \Tt _{\Dd}$ is uniquely determined by $f$.
\end{remark}

In case of curves, we sometimes may improve Remark \ref{aaa1} to a strict inequality in the following way. 

\begin{remark}\label{aaa2}
Take $[\Ee] \in \UU:=\UU_X (s;r_1, d_1; \dots; r_s, d_s)$ with the Harder-Narasimhan filtration (\ref{hn}). Assume that $s\ge 2$ with $r _s\ne r_1$, 
$$\gcd (r_1,d_1) =\gcd (r_s,d_s)=1 \text{ and } d_1/r_1+\gamma \ge d_s/r_s.$$ 
Since $\gcd (r_1,d_1)=1$, the sheaf $\Ff _1$ is stable and so $\Ff _1\otimes \Tt_{\Dd}$ is stable. Since $\gcd (r_s,d_s)=1$, the sheaf $\Ff _s/\Ff _{s-1}$ is also stable. From $r_1\ne r_s$ we get that  $\Ff _1\otimes \Tt_{\Dd}$ and $\Ff _s/\Ff _{s-1}$ are not isomorphic and so we have $\Hom (\Ff _s/\Ff _{s-1},\Ff _1\otimes \Tt_{\Dd})=0$. If $s\ge 3$, we obviously have
$\Hom (\Ff _i/\Ff _{i-1},\Ff _j\otimes \Tt_{\Dd})=0$ for all $i,j \in \{1,\dots ,s\}$ with $(i,j) \ne (s,1)$, even without the assumptions $r _s\ne r_1$ and $\gcd(r_1,d_1) =\gcd(r_s,d_s)=1$.
Thus we have $\Hom (\Ee, \Ee\otimes \Tt_{\Dd})=0$.
\end{remark}

\begin{remark}\label{h1}
In the following exact sequence
\begin{equation}\label{eqh1}
0 \to \Aa \to \Cc \to \Bb \to 0
\end{equation}
of vector bundles on $X$ with $\Aa$ and $\Bb$ semistable, if we have $\mu (\Aa) +2-2g > \mu (\Bb)$, then we have $h^1(\Aa \otimes \Bb ^\vee )=0$ and so (\ref{eqh1}) splits. Thus if we have
$$\frac{d_i}{r_i}+2-2g > \frac{d_{i+1}}{r_{i+1}}$$
for all $i$, then we have $\Ee \cong gr (\Ee )$ for all $[\Ee] \in \UU:=\UU_X (s;r_1, d_1; \dots; r_s, d_s)$. 

Now assume $d_i/r_i +2-2g\ge d_{i+1}/r_{i+1}$ for all $i$. If the equality holds for some $i$, i.e. $d_i/r_i = d_{i+1}/r_{i+1}+2g-2$, then we have $r_i\ne r_{i+1}$ and that $r_h$ and $d_h$ are coprime, where $h\in \{i,i+1\}$ is the index with higher rank $r_h =\max \{r_i,r_{i+1}\}$. As in Remark \ref{aaa2} we get $\Ee \cong gr (\Ee )$ for all $[\Ee] \in \UU$.

For example, take $s=2$. We just proved that $\Ee \cong \Ff _1\oplus \Ff _2/\Ff _1$ for all $[\Ee] \in \UU (2;1,d_1;1,d_2)$ with a nonzero map $\Phi :  \Ee \rightarrow \Ee \otimes \Tt_{\Dd}$ and either $\Dd \ne 0$ or $r_1\ne r_2$, and $d_h,r_h\in \ZZ$, where $r_h = \max \{r_1,r_2\}$.
\end{remark}

\begin{example}\label{h2}
Assume $d_1>d_2-\gamma$. For a fixed $\Rr \in \mathrm{Pic}^{d_2}(X)$, consider the set 
$$\EE:=\{ (\Ff _1,\psi)~|~\Ff _1\in \mathrm{Pic}^{d_1}(X) \text{ and }0\ne \psi :\Rr \rightarrow \Ff _1\times \Tt _{\Dd}\}/\sim,$$
where the equivalent relation $\sim$ is given by $(\Ff _1,\psi )\sim (\Ff _1,c\psi )$ for all $c\in \CC^\ast$. $\EE$ is the set of all effective divisors of $X$ with degree $d_1+\gamma -d_2$ and so $\EE$ is isomorphic to a symmetric product of $d_1+\gamma -d_2$ copies of $X$ and in particular it is irreducible. By Remark \ref{h1} we have $\Ee \cong \Ff _1\oplus \Ff _2/\Ff _1$ for all $[\Ee]  \in \UU_X (2;1,d_2+\gamma;1,d_2)$.
\end{example}

\begin{example}\label{h02.1}
Assume $g\ge 2$ and take $\Dd =\emptyset$ so that $\gamma =2-2g$. Fix any $d\in \ZZ$ and consider $\Ee \in \UU_X (2;1,d+2g-2;1,d)$ with a nonzero map $\Phi : \Ee \rightarrow \Ee \otimes T_X$. Set $\Rr := \Ff _2/\Ff _1\in \mathrm{Pic}^d(X)$ and then $\Phi$ is induced by a nonzero map $\psi : \Rr \rightarrow \Ff _1\otimes T_X$. Since $\Ff _1$ is in $\mathrm{Pic}^{d+2g-2}(X)$, the map $\psi$ is an isomorphism. Thus we get $\Ff _1\cong \Rr \otimes \omega _X$ and that for a fixed $\Ee$ the set of all nonzero map $\Phi$ is parametrized by a nonzero scalar. From $h^1(\omega _X)=1$ we see that there are, up to isomorphism, exactly two vector bundles $\Ee$ fitting into an exact sequence
\begin{equation}\label{eqh02.1}
0 \to \Rr\otimes \omega _X \to \Ee \to \Rr \to 0,
\end{equation}
that is, $\left(\Rr \otimes \omega _X\right)\oplus \Rr$ and an indecomposable bundle. Thus the set of all $(\Ee ,\Phi)$, up to isomorphisms, with nonzero $\Phi$, is parametrized one-to-one by the disjoint union of two copies of $\mathrm{Pic}^d(X)\times \CC^\ast$. Thus no one-to-one parameter space is irreducible. We get another irreducible parameter space that is not one-to-one, by taking as parameter space, up to a nonzero constant, the relative $\Ext^1$ group of (\ref{eqh02.1}) over $\mathrm{Pic}^d(X)$; each indecomposable bundle $\Ee$ appears $\infty^1$-times and it has $gr (\Ee )\cong \left(\Rr \otimes \omega_X \right) \oplus \Rr$ as its limit inside the parameter space.
\end{example}

Now for $s$ at least two let us define the set $\UU^{\mathrm{co}}=\UU_X^{\mathrm{co}}(s;r_1, d_1; \ldots ; r_s, d_s)$ of certain co-Higgs bundles associated to $\UU=\UU_X(s;r_1, d_1; \ldots; r_s, d_s)$ as follows. 
\begin{align*}
\UU^{\mathrm{co}}:=\{ (\Ee, \Phi) ~|~ [\Ee] \in &\UU \text{ and } \Phi : \Ee \rightarrow \Ee \otimes \Tt_{\Dd} \text{ with } \mathrm{Im}(\Phi) \subseteq \Ff_1 \text{ and } \mathrm{ker} (\Phi) \subseteq \Ff_{s-1}\\
&\text{ such that } \mathrm{rk} (\mathrm{Im}(\Phi))=\min \{r_1, r_s\} \text{ and } \Ff_1, \Ff_s/\Ff_{s-1} \text{ stable }\}
\end{align*}
Denote by $\Gamma\subseteq \MM _X(r_1,d_1)\times \MM _X(r_s,d_s)$ the set of all pairs $(\Ff _1,\Ff _s/\Ff _{s-1})$ obtained from $\UU^{\mathrm{co}}$ and call the projection from $\Gamma$ to each factor by $\pi_1$ and $\pi_2$, respectively. 

\begin{proposition}\label{h3}
Assume that
\begin{itemize}
\item each $d_i$ is positive such that $d_i/r_i > d_{i+1}/r_{i+1}$ for all $i$, and 
\item $d_1/r_1+\gamma \ge d_s/r_s$.
\end{itemize}
Then we have the following assertions. 
\begin{itemize}
\item [(i)] If $r_1=r_s$, then $\pi _1$ and $\pi _2$ are dominant.
\item [(ii)] If $r_1<r_s$ (resp. $r_1>r_s$) and $d_1/r_1+\gamma > d_s/r_s$, then $\pi _1$ (resp. $\pi _2$) is dominant.
\item [(iii)] Assume $d_1/r_1+\gamma > g-1+ d_s/r_s$. Then $\Gamma$ contains a non-empty open subset of $\MM _X(r_1,d_1)\times \MM _X(r_s,d_s)$; if $r_1\ge r_s$, then we have $\mathrm{ker}(\Phi ) =\Ff _{s-1}$.
\end{itemize}
\end{proposition}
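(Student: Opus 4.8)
The plan is to reduce every assertion to the existence of a single \emph{maximal-rank} sheaf map $f:\Ff_s/\Ff_{s-1}\to \Ff_1\otimes \Tt_{\Dd}$ between the two extreme (stable) graded pieces. Indeed, given stable bundles $\Ff_1$ of type $(r_1,d_1)$ and $\Gg:=\Ff_s/\Ff_{s-1}$ of type $(r_s,d_s)$ together with such an $f$ of rank $\min\{r_1,r_s\}$, one builds an $[\Ee]\in\UU$ by choosing arbitrary semistable middle pieces of the prescribed numerical type and arbitrary (for instance split) iterated extensions; since the slopes $d_i/r_i$ strictly decrease, the resulting filtration is automatically the Harder--Narasimhan filtration, so $\Ff_1$ and $\Ff_s/\Ff_{s-1}$ are exactly its first and last pieces. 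Composing the quotient $\Ee\to\Ee/\Ff_{s-1}\cong\Gg$ with $f$ and the inclusion $\Ff_1\otimes\Tt_{\Dd}\hookrightarrow\Ee\otimes\Tt_{\Dd}$ produces the required $\Phi$, with $\Image(\Phi)=\iota(\Image f)$ of rank $\min\{r_1,r_s\}$ inside $\Ff_1\otimes\Tt_{\Dd}$. Consequently $\Gamma$ is precisely the locus of pairs $(\Ff_1,\Gg)$ of stable bundles admitting a maximal-rank map $\Gg\to\Ff_1\otimes\Tt_{\Dd}$, and dominance of $\pi_1$ (resp. $\pi_2$), resp. the openness in (iii), becomes the claim that a general $\Ff_1$ (resp. general $\Gg$, resp. a general independent pair) admits such a partner and map.

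For part (i) with $r_1=r_s$, a maximal-rank $f$ is a full-rank injection, so $\Gg$ is a subsheaf of $\Ff_1\otimes\Tt_{\Dd}$ of colength $\ell:=d_1+r_1\gamma-d_s\ge 0$ (the inequality being $d_1/r_1+\gamma\ge d_s/r_s$). Here I would argue purely by elementary transformations. For $\pi_1$, fix a general $[\Ff_1]$, so $\Ff_1\otimes\Tt_{\Dd}$ is general in its moduli, and let $\Gg$ be obtained from it by $\ell$ general negative elementary transformations; the negative analogue of Lemma \ref{g1} noted in its proof shows $\Gg$ is stable, and the defining inclusions supply $f$, so $(\Ff_1,\Gg)\in\Gamma$. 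For $\pi_2$, fix a general $[\Gg]$, apply $\ell$ general positive elementary transformations (stable by Lemma \ref{g1}) to get $B$, and set $\Ff_1:=B\otimes\Tt_{\Dd}^\vee$; then $\Gg\hookrightarrow B=\Ff_1\otimes\Tt_{\Dd}$. Both projections are therefore dominant.

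For part (ii) the ranks differ and the slope inequality is strict, so a maximal-rank $f$ is a generic surjection when $r_1<r_s$ and a generic injection when $r_1>r_s$. In the first case, for $\pi_1$ I would fix a general $[\Ff_1]$ and seek a stable $\Gg$ of type $(r_s,d_s)$ sitting in $0\to K\to\Gg\to\Ff_1\otimes\Tt_{\Dd}\to 0$; since $d_s/r_s<d_1/r_1+\gamma$ forces $\mu(K)<\mu(\Gg)<\mu(\Ff_1\otimes\Tt_{\Dd})$, the positive answer to Lange's conjecture \cite{RT} yields such a sequence with all three terms stable. Symmetrically, for $\pi_2$ when $r_1>r_s$ one realizes a general $\Gg$ as a stable subbundle of a stable $\Ff_1\otimes\Tt_{\Dd}$. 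The subtle point, and the main obstacle of this part, is that for dominance one must be able to \emph{prescribe} the quotient $\Ff_1\otimes\Tt_{\Dd}$ (resp. the sub $\Gg$) to be a \emph{general} bundle, not merely to produce some stable sequence; this needs the refined form of \cite{RT} (equivalently, a dimension count showing the forgetful map from the relevant family of extensions to the moduli of the prescribed factor is dominant).

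Finally, for part (iii) the stronger hypothesis $d_1/r_1+\gamma>g-1+d_s/r_s$ says exactly that $\mu(\Gg^\vee\otimes\Ff_1\otimes\Tt_{\Dd})=d_1/r_1+\gamma-d_s/r_s>g-1$. Hence, for a general independent pair $(\Ff_1,\Gg)$, the theorem of Hirschowitz quoted in the proof of Theorem \ref{g4} gives $h^1(\Gg^\vee\otimes\Ff_1\otimes\Tt_{\Dd})=0$, and Riemann--Roch then forces $h^0(\Gg^\vee\otimes\Ff_1\otimes\Tt_{\Dd})=r_1r_s(d_1/r_1+\gamma-d_s/r_s-g+1)>0$; moreover for general bundles a general homomorphism has maximal rank $\min\{r_1,r_s\}$. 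Building $(\Ee,\Phi)$ as in the first paragraph then shows $(\Ff_1,\Gg)\in\Gamma$ for every pair in a non-empty open subset of $\MM_X(r_1,d_1)\times\MM_X(r_s,d_s)$. When $r_1\ge r_s$ the map $f$ is injective, so $\ker(\Phi)$ is the preimage of $\ker(f)=0$ under $\Ee\to\Ee/\Ff_{s-1}$, namely $\Ff_{s-1}$, giving the last assertion.
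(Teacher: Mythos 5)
Your proposal follows the paper's own proof essentially step for step: the same reduction to a maximal-rank map $\Gg\to\Ff_1\otimes\Tt_{\Dd}$ between the extreme Harder--Narasimhan pieces (with the middle pieces chosen arbitrarily and the extension split), elementary transformations via Lemma \ref{g1} and its negative analogue for (i), the Lange conjecture via \cite{RT} (through the constructions in parts (a)/(b) of the proof of Theorem \ref{g4}) for (ii), and Hirschowitz's non-speciality theorem plus Riemann--Roch plus the fact that a general homomorphism between general stable bundles has rank $\min\{r_1,r_s\}$ --- which the paper justifies carefully via the irreducibility of a relative $\Hom$ space over $\WW$ and \cite[Theorem 1.2(d)]{br} --- for (iii). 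The subtlety you flag in (ii), namely that one must be able to prescribe one term of the Lange-type exact sequence to be general in its moduli space, is also the delicate point in the paper's argument, which delegates it to the proof of Theorem \ref{g4}.
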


\begin{proof}
Fix a point $([\Aa_1], [\Aa_s]) \in \MM_X(r_1, d_1)\times \MM_X(r_s, d_s)$. For arbitrary $[\Aa_i]\in \MM_X(r_i, d_i)$, $i=2,\cdots, s-1$, we consider $\Ee := \oplus _{i=1}^{s} \Aa _i$ with the Harder-Narasimhan filtration (\ref{hn}) such that $\Ff _1\cong\Aa _1$ and $\Ff _s/\Ff _{s-1} \cong\Aa _s$. We take a map $\Phi : \Ee \rightarrow \Ee \otimes \Tt _{\Dd}$ with $\Phi (\Ff _{s-1})=0$, which is induced by a map $\psi: \Aa _s \rightarrow \Aa _1\otimes \Tt _{\Dd}$ whose existence is guaranteed by the assumptions. 

First assume $r_s=r_1$. We need to prove the existence of a map $\psi$ of rank $r_1$ when $\Aa _1$ is general in $\MM _X(r_1,d_1)$ and $\Aa _s$ is general in $\MM _X(r_s,d_s)$; we do not claim here that $([\Aa _1],[\Aa _2])$ is general in $\MM _X(r_1,d_1)\times \MM_X(r_s,d_s)$. The dominance of $\pi _2$ is the content of Lemma \ref{g1}. The dominance of $\pi _1$ can be proved by applying the dual map, or with the same proof as in the proof of Lemma \ref{g1}, concluding part (i). 

Now assume $r_s < r_1$ and $d_1/r_1+\gamma > d_s/r_s$. We take as $\Aa _s$ a general element of $\MM _X(r_s,d_s)$. The existence of a stable $\Aa _1\in  \MM _X(r_1,d_1)$ with an embedding $\psi : \Aa _s\hookrightarrow \Aa _1\otimes \Tt _{\Dd}$ with $\Aa _1/\psi (\Aa _s)$ stable and general in $\MM _X(r_s-r_1,d_1-\gamma r_1)$ is proved in part (i) of the proof of Theorem \ref{g4}.

By using part (ii) of the proof of Theorem \ref{g4} instead of part (i), we get the case $r_s > r_1$ and $d_1/r_1+\gamma > d_s/r_s$. 

Now consider part (iii) and assume $d_1/r_1+\gamma > g-1 +d_s/r_s$. Take a general $([\Aa _1],[\Aa _s])\in \MM _X(r_1,d_1)\times \MM _X(r_s,d_s)$ and set $\Bb _1:= \Aa _1\otimes \Tt _{\Dd}$. Then it is sufficient to find $\psi : \Aa _s\rightarrow \Bb _1$ with $\mathrm{Im}(\psi ) =\min \{r_1,r_s\}$. By the assumptions, we have $\mu (\Aa _s^\vee \otimes \Bb _1)>g-1$ and so Riemann-Roch gives $\Hom (\Aa _s,\Bb _1) \ne 0$. Take a general element $\psi \in \Hom (\Aa _s,\Bb _1) $ and then it is sufficient to prove that $\psi$ has rank $\min \{r_1,r_s\}$. Note that we have $h^1(\Aa _s^\vee \otimes \Bb _1) =0$ and so $\Bb_1$ is an element of the following set 
$$\WW:= \{[\Ff] \in \MM _X(r_1,d_1+\gamma r_1)\mid h^1(\Aa _s^\vee \otimes \Ff) =0\}.$$
By Riemann-Roch, we have the following, for each $[\Ff]\in \WW$,
$$h^0(\Aa _s^\vee \otimes \Ff) =\deg (\Aa _s^\vee \otimes \Ff) +r_1r_2(1-g) = r_1r_2\left(\frac{d_1}{r_1}+\gamma -\frac{d_s}{r_s}+1-g\right)>0.$$ 
Now take the relative $\Hom$ with $\WW$ as its parameter space, i.e. for each $[\Ff] \in \WW$, the fibre is $\Hom (\Aa _s,\Ff )$. The total space $\Lambda$ of this relative $\Hom$ is irreducible, because $h^0(\Aa _s^\vee \otimes \Ff)$ is constant for all $[\Ff]\in \WW$ by \cite{Banica} and \cite{lange}. By \cite[part (d) of Theorem 1.2]{br}, a general element $(\phi: \Aa _s\rightarrow\Ff )$ of $\Lambda$ has $\phi$ with rank $\min \{r_1,r_s\}$. When $r_1\ge r_s$, this implies that $\mathrm{ker}(\Phi )=\Ff _{s-1}$, because the map $\psi : \Ff _s/\Ff _{s-1}\rightarrow \Ff _1\otimes \Tt _{\Dd }$ is injective if and only if it has rank $r_s$ .
\end{proof}

\begin{remark}\label{h00}
As in the end of proof of Proposition \ref{h3}, to show that the set of the co-Higgs bundles $(\Ee ,\Phi)$ with certain properties is parametrized by an irreducible variety, it sometimes works to prove that (a) the set of all bundles $\Ee$ is parametrized by an irreducible variety $Y$, and (b) the integer $k:= \dim \Hom (\Ee _y,\Ee _y\otimes \Tt _{\Dd})$ is constant for all $y\in Y$. In this case, the set of all $(\Ee ,\Phi )$ with no restriction on $\Phi$ is parametrized by a vector bundle of rank $k$ on $Y$.
\end{remark}

\begin{example}\label{h01}
Assume $r_1=r_s$ and $d_1/r_1+\gamma = d_s/r_s$. Consider a bundle $[\Ee] \in \UU_X (s;r_1,d_1;\dots ;r_s,d_s)$ with an arbitrary map $\Phi : \Ee \rightarrow \Ee \otimes \Tt _{\Dd}$. Since $d_i/r_i > d_1/r_1+\gamma$ for all $i<s$, there is no nonzero map $\Ff _i/\Ff _{i-1}\rightarrow \Ee\otimes  \Tt _{\Dd}$ and so we have $\Ff _{s-1}\subseteq \mathrm{ker}(\Phi)$. On the other hand, since $d_s/r_s > d_j/r_j+\gamma$ for all $j>1$, we have $\Phi (\Ee )\subseteq \Ff _1$. We have $\mathrm{rk}(\Phi )=r_1$ if and only if $\Ff _s/\Ff _{s-1}\cong  \Ff _1\otimes \Tt _{\Dd}$ and $\Phi$ is induced by an isomorphism $\Ff _s/\Ff _{s-1} \rightarrow \Ff _1\otimes \Tt _{\Dd}$.
\end{example}

\begin{example}\label{h02}
Assume $r_1=r_s$ and that
$$d_2/r_2 +\gamma < d_s/r_s \text{ and } d_{s-1}/r_{s-1}>d_1/r_1+\gamma.$$
If we choose $(\Ee ,\Phi )$ with $[\Ee] \in \UU_X (s;r_1,d_1;\dots ;r_s,d_s)$, then as in Example \ref{h01} we see that $\Phi (\Ee )\subseteq \Ff _1\otimes \Tt_{\Dd}$ and $\Ff _{s-1}\subseteq \mathrm{ker}(\Phi)$, because $d_i/r_i+\gamma <d_s/r_s$ for all $i>1$ and $d_j/r_j >d_1/r_1+\gamma$ for all $j<s$. Set $k:=d_1-d_2+\gamma r_1$. Then we have $\mathrm{rk}(\Phi )=r_1$ if and only if $\Ff _1\otimes \Tt _{\Dd}$ is obtained from $\Ff _s/\Ff _{s-1}$ by applying $k$ positive elementary transformations and $\Phi$ is induced by the associated inclusion $\Ff _s/\Ff _{s-1} \hookrightarrow \Ff _1\otimes \Tt _{\Dd}$.
\end{example}

\section{Segre invariant}
In this section, we do not assume that $\Tt _{\Dd}$ has some kind of negativity, so that we may have stable $(\Ee ,\Phi )$ with nonzero $\Phi$. Let $\Ee$ be a torsion-free sheaf of rank $r\ge 2$ and $\Phi: \Ee \rightarrow \Ee \otimes \Tt_{\Dd}$ a co-Higgs field. For a fixed integer $k\in \{1,\dots, r-1\}$, let us denote by $\Ss(k,\Ee, \Phi)$ the set of all subsheaves $\Aa \subset \Ee$ of rank $k$ such that $\Phi (\Aa )\subseteq \Aa \otimes \Tt _{\Dd}$. Define the $k^{th}$-{\it Segre invariant} to be 
$$s_k(\Ee ,\Phi ):= k\deg \Ee -\max _{\Aa \in \Ss (k,\Ee ,\Phi)} r\deg \Aa.$$
In case $\Phi=0$, we simply denote it by $s_k(\Ee)$. This is an extension of the Segre invariant, introduced in \cite{l1} with the notation $s_k(\Ee)$, to the case $n\ge 2$. Over curves this notion was used in several literatures, including \cite{br, b, BL, Hi, l2, ox, RT, tb}. If we take $\Tt _{\Dd}^\vee$ instead of $\Tt _{\Dd}$, we get a definition for logarithmic Higgs fields. Note that we always have $\Ss (k,\Ee ,0)\ne \emptyset$ and $s_k(\Ee) \le s_k(\Ee, \Phi)$. 

\begin{lemma}\label{s1}
Let $(\Ee ,\Phi )$ be a $2$-nilpotent co-Higgs bundle of rank $r$, and set $\Aa := \mathrm{ker}(\Phi)$ and $\Bb := \mathrm{Im}(\Phi)$ with $r' := \mathrm{rk}(\Aa )$. Then we have the following:
\begin{itemize}
\item [(i)] $\Aa \in \Ss (r' ,\Ee ,\Phi )$;
\item [(ii)] $\Ss (k,\Aa ,0) \subseteq \Ss (k,\Ee ,\Phi )$ for $1\le k < r'$;
\item [(iii)] $\Bb$ is torsion-free and $\Phi ^{-1}(\Gg )\in \Ss (k,\Ee ,\Phi)$ for all $\Gg \in \Ss (k-r',\Bb ,0)$ and $r'<k<r$;
\item [(iv)] $\Ss (k,\Ee ,\Phi )\ne \emptyset$ for all $k$.
\end{itemize}
\end{lemma}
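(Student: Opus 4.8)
The plan is to isolate one structural consequence of $2$-nilpotency that drives all four parts, namely the inclusion $\Bb \subseteq \Aa \otimes \Tt_{\Dd}$. Since $\Tt_{\Dd}$ is locally free, tensoring the inclusion $\Aa = \mathrm{ker}(\Phi) \hookrightarrow \Ee$ by $\Tt_{\Dd}$ stays left exact, so $\mathrm{ker}(\widetilde{\Phi}) = \Aa \otimes \Tt_{\Dd}$ with $\widetilde{\Phi} = \Phi \otimes \mathrm{id}_{\Tt_{\Dd}}$; the hypothesis $\widetilde{\Phi} \circ \Phi = 0$ then reads $\Bb = \mathrm{Im}(\Phi) \subseteq \mathrm{ker}(\widetilde{\Phi}) = \Aa \otimes \Tt_{\Dd}$. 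I would record this at the outset, together with the exact sequence $0 \to \Aa \to \Ee \xrightarrow{\Phi} \Bb \to 0$, which gives $\mathrm{rk}(\Bb) = r - r'$; since $\Phi \neq 0$ and $\Bb$ is torsion-free (a subsheaf of the torsion-free sheaf $\Ee \otimes \Tt_{\Dd}$), combining $\Bb \neq 0$ with $\Bb \subseteq \Aa \otimes \Tt_{\Dd}$ forces $1 \le r' \le r-1$.

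Parts (i) and (ii) are then pure definition-chasing. For (i), $\Phi(\Aa) = 0 \subseteq \Aa \otimes \Tt_{\Dd}$ shows $\Aa \in \Ss(r', \Ee, \Phi)$. For (ii), any rank-$k$ subsheaf $\Cc \subseteq \Aa$ satisfies $\Phi(\Cc) = 0 \subseteq \Cc \otimes \Tt_{\Dd}$, so $\Cc \in \Ss(k, \Ee, \Phi)$; this is precisely the inclusion $\Ss(k, \Aa, 0) \subseteq \Ss(k, \Ee, \Phi)$.

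Part (iii) is where the key inclusion is used. Having already noted that $\Bb$ is torsion-free, I would take $\Gg \in \Ss(k - r', \Bb, 0)$, i.e.\ a rank-$(k-r')$ subsheaf of $\Bb$, and compute $\mathrm{rk}(\Phi^{-1}(\Gg))$ from the sequence $0 \to \Aa \to \Phi^{-1}(\Gg) \xrightarrow{\Phi} \Gg \to 0$. Here surjectivity onto $\Gg$ uses $\Gg \subseteq \Bb = \mathrm{Im}(\Phi)$ and the fact that preimages of subsheaves under an epimorphism surject (checked on stalks), giving rank $r' + (k - r') = k$. Invariance is immediate from the key fact: $\Phi(\Phi^{-1}(\Gg)) = \Gg \subseteq \Bb \subseteq \Aa \otimes \Tt_{\Dd} \subseteq \Phi^{-1}(\Gg) \otimes \Tt_{\Dd}$, the last inclusion because $\Aa \subseteq \Phi^{-1}(\Gg)$ and $\Tt_{\Dd}$ is locally free. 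Hence $\Phi^{-1}(\Gg) \in \Ss(k, \Ee, \Phi)$.

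Finally (iv) is a bookkeeping synthesis across the three ranges $1 \le k < r'$, $k = r'$, and $r' < k \le r-1$, which exhaust $\{1, \dots, r-1\}$: apply (ii) on the first range (with $\Ss(k, \Aa, 0) \neq \emptyset$ since the rank-$r'$ torsion-free sheaf $\Aa$ has subsheaves of every smaller rank), (i) on the middle value, and (iii) on the last range (with $\Ss(k-r', \Bb, 0) \neq \emptyset$ since $1 \le k - r' < \mathrm{rk}(\Bb) = r-r'$). I do not expect a serious obstacle; the only points demanding care are the rank count and the sheaf-level surjectivity in (iii), and the elementary existence of subsheaves of each intermediate rank for a torsion-free sheaf, which I would invoke rather than reprove.
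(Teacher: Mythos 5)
Your proposal is correct and follows exactly the paper's route: the paper's own (very terse) proof also hinges on the single inclusion $\Bb \subseteq \Aa \otimes \Tt_{\Dd}$ coming from $2$-nilpotency, treats (i) and (ii) as immediate, and deduces (iv) from the other parts. You have merely filled in the rank bookkeeping and the flatness-of-$\Tt_{\Dd}$ details that the paper leaves implicit, all of which check out.
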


\begin{proof}
Parts (i) and (ii) are obvious. Part (iii) is true, because $\Bb \subseteq \Aa\otimes \Tt _{\Dd}$ by the definition of $2$-nilpotent. Part (iv) follows from the other ones.
\end{proof}

\begin{example}
From \cite[Theorem 1.1]{BH1} we get a description of the set of nilpotent co-Higgs structures on a fixed stable bundle of rank two on $\PP^n$. Indeed it is either trivial or an $(n+1)$-dimensional vector space, depending on the parity of the first Chern class and an invariant $x_{\Ee}$. We get a non-trivial set of nilpotent co-Higgs structures on $\Ee$ if and only if $c_1(\Ee)+2x_{\Ee}=-3$, and in this case we get $s_1(\Ee, \Phi)=1$. 
\end{example}


\subsection{Curve case}
From now on we assume $n=1$ with $g=g(X)$ and $\gamma <0$. Take $(\Ee ,\Phi )$ with $[\Ee] \in \UU_X (s;r_1,d_1;\dots ;r_s,d_s)$ and let (\ref{hn}) be the Harder-Narasimhan filtration of $\Ee$.

\begin{remark}\label{h03}
From the assumption $\gamma <0$, we have $\Phi (\Ff _i)\subseteq \Ff _{i-1}\otimes \Tt _{\Dd}$.
\end{remark}
Remark \ref{h03} immediately proves the following two lemmas.

\begin{lemma}\label{h04}
For an integer $j\in \{1,\dots ,s-1\}$, set $k(j)= \sum _{i=1}^{j} r_i$. Then
$$s_{k(j)}(\Ee ,\Phi ) \le  k(j)\deg \Ee  -r\deg \Ff _j$$ 
\end{lemma}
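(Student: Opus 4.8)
The plan is to exhibit the single subsheaf $\Ff_j$ as an admissible competitor in the set $\Ss(k(j),\Ee,\Phi)$ and read off the claimed bound directly from the definition of the Segre invariant. First I would record that, by construction, $\Ff_j$ is a subsheaf of $\Ee$ of rank $k(j)=\sum_{i=1}^j r_i$, since the Harder-Narasimhan filtration (\ref{hn}) has $\mathrm{rk}(\Ff_i/\Ff_{i-1})=r_i$; its degree is $\deg\Ff_j=\sum_{i=1}^j d_i$. Next I would invoke Remark \ref{h03}, which tells us $\Phi(\Ff_i)\subseteq \Ff_{i-1}\otimes\Tt_{\Dd}$ for every $i$, a consequence of $\gamma<0$. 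In particular $\Phi(\Ff_j)\subseteq \Ff_{j-1}\otimes\Tt_{\Dd}\subseteq \Ff_j\otimes\Tt_{\Dd}$, so $\Ff_j$ is $\Phi$-invariant in the sense required for membership in $\Ss(k(j),\Ee,\Phi)$.

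Having verified $\Ff_j\in\Ss(k(j),\Ee,\Phi)$, the bound is then immediate from
$$s_k(\Ee,\Phi)=k\deg\Ee-\max_{\Aa\in\Ss(k,\Ee,\Phi)}r\deg\Aa.$$
Taking $k=k(j)$, the maximum over $\Ss(k(j),\Ee,\Phi)$ is at least the value $r\deg\Ff_j$ attained at the particular element $\Aa=\Ff_j$, so subtracting gives
$$s_{k(j)}(\Ee,\Phi)\le k(j)\deg\Ee-r\deg\Ff_j,$$
which is exactly the assertion.

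I do not anticipate any genuine obstacle here: the entire content is the observation, already packaged in Remark \ref{h03}, that the Harder-Narasimhan subsheaves $\Ff_j$ are automatically $\Phi$-invariant when $\gamma<0$, together with the fact that the Segre invariant is defined via a maximum, so that any single admissible subsheaf furnishes an upper bound. The only point worth stating carefully is that membership in $\Ss(k(j),\Ee,\Phi)$ requires precisely $\Phi(\Ff_j)\subseteq\Ff_j\otimes\Tt_{\Dd}$ rather than the stronger containment in $\Ff_{j-1}\otimes\Tt_{\Dd}$ that Remark \ref{h03} actually supplies; since the latter trivially implies the former, no separate argument is needed.
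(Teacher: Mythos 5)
Your proof is correct and is exactly the argument the paper intends: the paper states that Remark \ref{h03} ``immediately proves'' Lemma \ref{h04}, and your write-up simply spells out that immediacy, namely that $\Phi(\Ff_j)\subseteq \Ff_{j-1}\otimes\Tt_{\Dd}\subseteq \Ff_j\otimes\Tt_{\Dd}$ places $\Ff_j$ in $\Ss(k(j),\Ee,\Phi)$, whence the bound follows from the definition of $s_k(\Ee,\Phi)$ as a maximum.
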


\begin{remark}
We expect that the inequality in Lemma \ref{h04} is in fact equality, although we give the positive answers only to some special cases; see Lemma \ref{h05} and Proposition \ref{h07}. 
\end{remark}

\begin{lemma}\label{h05}
Assume $g=0$ and take $\Ee \cong \oplus _{i=1}^{r} \Oo _{\PP^1}(a_i)$ with $a_i\ge a_j$ for all $i\le j$. Then we have
$$s_k(\Ee ,\Phi )=s_k(\Ee ) =k(a_1+\cdots +a_r) -r(a_1+\cdots +a_k).$$
\end{lemma}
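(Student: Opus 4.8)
The plan is to prove Lemma \ref{h05} by combining the general upper bound from Lemma \ref{h04} with a matching lower bound coming from the explicit decomposition available on $\PP^1$. Since $\Ee \cong \oplus_{i=1}^r \Oo_{\PP^1}(a_i)$ with $a_1 \ge \cdots \ge a_r$, the Harder-Narasimhan filtration groups together the repeated values of the $a_i$, and each graded piece $\Ff_j/\Ff_{j-1}$ is a sum of line bundles of the same degree. The identity $s_k(\Ee) = k(a_1+\cdots+a_r) - r(a_1+\cdots+a_k)$ is the standard computation of the usual Segre invariant on $\PP^1$: the subbundle of maximal degree of rank $k$ is $\oplus_{i=1}^k \Oo_{\PP^1}(a_i)$, since any rank-$k$ subsheaf has degree at most $a_1+\cdots+a_k$ (one can see this by projecting successively onto the factors, or from the fact that $h^0$ of a rank-$k$ subbundle into $\Ee$ is maximized by the obvious direct summand). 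So the real content is to show that introducing $\Phi$ does not change this value, i.e. $s_k(\Ee,\Phi) = s_k(\Ee)$.

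First I would recall that $s_k(\Ee) \le s_k(\Ee,\Phi)$ always holds, because $\Ss(k,\Ee,0) \supseteq \Ss(k,\Ee,\Phi)$ forces the maximum defining $s_k(\Ee,\Phi)$ to be taken over a smaller set, hence $s_k(\Ee,\Phi) \ge s_k(\Ee)$. Thus it suffices to produce, for each $k$, a single $\Phi$-invariant subsheaf $\Aa$ of rank $k$ achieving $\deg \Aa = a_1+\cdots+a_k$; this would give $s_k(\Ee,\Phi) \le k\deg\Ee - r(a_1+\cdots+a_k) = s_k(\Ee)$, and combined with the reverse inequality would finish the proof. The natural candidate is $\Aa = \oplus_{i=1}^k \Oo_{\PP^1}(a_i)$, i.e. the sum of the $k$ most positive summands. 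The key step is therefore to verify that this particular $\Aa$ satisfies $\Phi(\Aa) \subseteq \Aa \otimes \Tt_{\Dd}$.

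To check $\Phi$-invariance of $\Aa = \oplus_{i=1}^k \Oo_{\PP^1}(a_i)$, I would use the structure of $\Phi$ dictated by degrees on $\PP^1$, exactly in the spirit of Remark \ref{h03}. Since $\gamma < 0$, a nonzero map $\Oo_{\PP^1}(a_i) \to \Oo_{\PP^1}(a_j+\gamma)$ (a matrix entry of $\Phi$) can only exist when $a_i \le a_j + \gamma < a_j$; that is, $\Phi$ can only send a summand of degree $a_i$ into summands of strictly smaller degree. Concretely $\Phi(\Ff_i) \subseteq \Ff_{i-1}\otimes \Tt_{\Dd}$ at the level of the Harder-Narasimhan filtration, but one needs the sharper statement at the level of the finer direct-sum decomposition. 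For the chosen $\Aa$, which is $\Ff_j$ for the appropriate $j$ when $k = r_1 + \cdots + r_j$, invariance is immediate from Remark \ref{h03}; the subtlety is when $k$ falls strictly inside a graded block, i.e. when $a_{k} = a_{k+1}$. In that case $\Aa$ is not a term of the filtration, and I must argue that $\Phi$ still preserves it, which follows because $\Phi$ maps the whole degree-$a_k$ block into summands of degree $< a_k$, all of which already sit inside $\Aa$, so the images land in $\Aa \otimes \Tt_{\Dd}$ regardless of how $\Aa$ cuts through the block.

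The main obstacle is precisely this last point: handling the case where the rank $k$ splits a graded piece of equal slope. I expect the cleanest route is to observe that for any decomposition-adapted $\Aa = \oplus_{i=1}^k\Oo_{\PP^1}(a_i)$, every matrix component $\Phi_{ij}: \Oo_{\PP^1}(a_j) \to \Oo_{\PP^1}(a_i+\gamma)$ with $j \le k < i$ is forced to vanish: such a component is nonzero only if $a_j \le a_i + \gamma$, but $j \le k < i$ gives $a_j \ge a_k \ge a_{k+1} \ge a_i > a_i + \gamma$ (using $\gamma<0$), a contradiction. Hence no summand of $\Aa$ maps outside $\Aa \otimes \Tt_{\Dd}$, establishing $\Aa \in \Ss(k,\Ee,\Phi)$. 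Once this invariance is secured, the two inequalities pinch together and the equality $s_k(\Ee,\Phi) = s_k(\Ee)$ follows for every $k$, completing the argument.
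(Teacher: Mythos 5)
Your proof is correct and takes essentially the same route as the paper, whose entire proof is the observation that Remark \ref{h03} (the degree argument forcing $\Phi$ to drop the Harder--Narasimhan level, so that $\Ff_j\subseteq \Aa\subseteq \Ff_{j+1}$ implies $\Phi(\Aa)\subseteq \Ff_j\otimes\Tt_{\Dd}\subseteq \Aa\otimes\Tt_{\Dd}$) makes the obvious maximal-degree subsheaf $\oplus_{i=1}^{k}\Oo_{\PP^1}(a_i)$ lie in $\Ss(k,\Ee,\Phi)$, pinching $s_k(\Ee,\Phi)\le s_k(\Ee)$ against the trivial reverse inequality. Your explicit matrix-component verification, including the case where $k$ cuts through a block of equal $a_i$'s, is precisely the detail the paper's one-line proof leaves implicit.
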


\begin{proposition}\label{h06}
Fix $[\Ee] \in \UU_X(s;r_1,d_1;\dots ;r_s,d_s)$ with $s\ge 2$ and $\Phi \in \Hom (\Ee ,\Ee \otimes \Tt _{\Dd})$. Choose any $k\in \{r_1+1,\dots ,r-r_s+1\}$ such that there is $h\in\{1,\dots ,s-1\}$ with $r_1+\cdots +r_h < k < r_1+\cdots +r_{h+1}$, and set 
\begin{align*}
r'&:= k-r_1-\cdots -r_s,\\
e&:= s_k(\Ff _{h+1}/\Ff _h),\\
d'&:= r'\deg \Ff _{h+1}/\Ff _h -er_{h+1}.
\end{align*}
Let $\Bb\subset \Ff _{h+1}/\Ff _h$ be any subsheaf of rank $r'$ and degree $d'$, and set $\Aa :=u^{-1}(\Bb )$, where $u$ is the surjection in the exact sequence
$$0\to \Ff _h \to \Ff _{h+1} \stackrel{u}{\to} \Ff _{h+1}/\Ff _h\to 0.$$
Then $\Bb \in \Ss (k,\Ee ,\Phi)$ and $s_k(\Ee ,\Phi ) \le k\deg \Ee -k(\deg \Ff _h +e)$.\end{proposition}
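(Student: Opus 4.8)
The plan is to verify the two assertions in turn, using Remark~\ref{h03} as the central structural input. First I would establish that $\Aa := u^{-1}(\Bb) \in \Ss(k,\Ee,\Phi)$. Note that $\Aa$ has rank $k$, since it is the preimage under $u$ of the rank-$r'$ subsheaf $\Bb \subseteq \Ff_{h+1}/\Ff_h$, so it sits in an exact sequence $0 \to \Ff_h \to \Aa \to \Bb \to 0$ with $\mathrm{rk}(\Aa) = (r_1 + \cdots + r_h) + r' = k$. The content of membership in $\Ss(k,\Ee,\Phi)$ is the $\Phi$-invariance condition $\Phi(\Aa) \subseteq \Aa \otimes \Tt_{\Dd}$. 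Here Remark~\ref{h03} does the work: since $\gamma < 0$ we have $\Phi(\Ff_i) \subseteq \Ff_{i-1}\otimes \Tt_{\Dd}$ for every $i$. Because $\Aa \subseteq \Ff_{h+1}$, I get $\Phi(\Aa) \subseteq \Phi(\Ff_{h+1}) \subseteq \Ff_h \otimes \Tt_{\Dd}$, and $\Ff_h \subseteq \Aa$ by construction, so $\Phi(\Aa) \subseteq \Ff_h \otimes \Tt_{\Dd} \subseteq \Aa \otimes \Tt_{\Dd}$. This proves the first claim with room to spare.

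Next I would compute the degree of $\Aa$ and plug it into the definition of the Segre invariant to obtain the stated bound. From the exact sequence $0 \to \Ff_h \to \Aa \to \Bb \to 0$ we read off $\deg \Aa = \deg \Ff_h + \deg \Bb = \deg \Ff_h + d'$. By definition $s_k(\Ee,\Phi) = k\deg\Ee - \max_{\Aa' \in \Ss(k,\Ee,\Phi)} r\deg\Aa'$, so exhibiting the single admissible subsheaf $\Aa$ yields the upper bound $s_k(\Ee,\Phi) \le k\deg\Ee - r\deg\Aa = k\deg\Ee - r(\deg\Ff_h + d')$. The only remaining point is to reconcile this with the asserted form $k\deg\Ee - k(\deg\Ff_h + e)$; so the crux is to understand how the numbers $d'$, $e$, and $r'$ conspire. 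The definition $d' = r'\deg(\Ff_{h+1}/\Ff_h) - e\,r_{h+1}$ is precisely the relation $e = s_k(\Ff_{h+1}/\Ff_h)$ unwound for the semistable subquotient, where a rank-$r'$ subsheaf $\Bb$ of maximal degree realizes $e = r'\deg(\Ff_{h+1}/\Ff_h) - r_{h+1}\deg\Bb$, i.e. $\deg\Bb = d'/r'$ after matching coefficients.

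The step I expect to demand the most care is the arithmetic matching between the coefficient $r$ in the definition of $s_k$ and the coefficient $k$ appearing in the stated bound, together with the apparent sign discrepancy in the displayed formula for $r'$ (which as written reads $r' = k - r_1 - \cdots - r_s$ rather than $r' = k - r_1 - \cdots - r_h$). I would treat $r' = k - (r_1 + \cdots + r_h)$ as the intended quantity, consistent with $r_1 + \cdots + r_h < k < r_1 + \cdots + r_{h+1}$ forcing $0 < r' < r_{h+1}$, so that $\Bb$ genuinely is a proper rank-$r'$ subsheaf of $\Ff_{h+1}/\Ff_h$. With this reading the bound follows by substituting $\deg\Aa = \deg\Ff_h + d'$ and then simplifying using the relation defining $d'$ in terms of $e$; the passage from the factor $r$ to the factor $k$ should emerge from expressing everything in terms of the slopes $\deg\Ff_h$ and the realized value $e$ of the Segre invariant of the semistable piece, and I would present this as a short direct computation rather than an independent difficulty.
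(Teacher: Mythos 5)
Your proof is correct and takes essentially the same route as the paper's: both establish $\Phi$-invariance of $\Aa = u^{-1}(\Bb)$ from $\Phi (\Ff _{h+1})\subseteq \Ff _h\otimes \Tt _{\Dd}$ (Remark~\ref{h03}) together with $\Ff _h\subseteq \Aa$, and then conclude from the degree computation $\deg \Aa =\deg \Ff _h+d'$ plugged into the definition of $s_k$. Your diagnosis of the typos also matches what the paper actually does in its own proof (where the roles of $\Aa$ and $\Bb$ are silently swapped and $d'$ is described as the maximal degree of a rank-$r'$ subsheaf of $\Ff _{h+1}/\Ff _h$), so treating $r'=k-(r_1+\cdots +r_h)$ as the intended quantity and leaving the final coefficient-matching as a direct substitution is exactly the level of detail the paper itself supplies.
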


\begin{proof}
Note that $d'$ is the degree of all rank $r'$ maximal degree subsheaves of $\Ff _{h+1}/\Ff _h$. Since $\deg \Aa =d'$, we have $\deg \Bb  =\deg \Ff _h+d'$. Since $\Phi (\Ff _{h+1})\subset \Ff _h\subset \Bb$, we have $\Bb \in \Ss (k,\Ee ,\Phi)$. Since $\deg \Bb  =\deg \Ff _h+\deg \Aa$, we get the assertion. 
\end{proof}

Now Lemma \ref{h05} and Proposition \ref{h06} prove the following result.

\begin{corollary}\label{h55}
The Segre invariant $s_k(\Ee ,\Phi)$ is defined for all $(\Ee ,\Phi)$, if $\gamma <0$.
\end{corollary}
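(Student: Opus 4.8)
The plan is to check directly the two properties that make
$$s_k(\Ee ,\Phi )= k\deg \Ee -\max _{\Aa \in \Ss (k,\Ee ,\Phi)} r\deg \Aa$$
a well-defined integer for every $k\in \{1,\dots ,r-1\}$: that $\Ss (k,\Ee ,\Phi)$ is non-empty, and that $\deg \Aa$ is bounded above as $\Aa$ runs over $\Ss (k,\Ee ,\Phi)$. Because degrees are integers, these two together force the maximum to be attained, so they are exactly what must be verified. I would dispose of the boundedness first, since it is the soft part: any $\Aa \in \Ss (k,\Ee ,\Phi)$ is in particular a rank-$k$ subsheaf of $\Ee$, and applying Lemma \ref{aaa1} to the inclusion into $\Ee$ of the maximal destabilizing subsheaf of $\Aa$ yields $\mu (\Aa )\le \mu _+(\Aa )\le \mu _+(\Ee)$, hence $\deg \Aa \le k\,\mu _+(\Ee)$, a bound independent of $\Aa$. (Alternatively one may cite the classical well-definedness of $s_k(\Ee)$ from \cite{l1} together with the inclusion $\Ss (k,\Ee ,\Phi)\subseteq \Ss (k,\Ee ,0)$.) Everything then reduces to exhibiting, for each $k$, one $\Phi$-invariant rank-$k$ subsheaf.

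For $g=0$ this is immediate: Lemma \ref{h05} already identifies $s_k(\Ee ,\Phi)$ with $s_k(\Ee)$ for all $k$, so in particular $\Ss (k,\Ee ,\Phi)\ne \emptyset$. I would therefore assume $g\ge 1$ and organize the remaining $k$ by their position relative to the partial sums $k(j):=r_1+\cdots +r_j$ ($0\le j\le s$) of the ranks in the filtration (\ref{hn}). There are three families. When $k=k(j)$ with $1\le j\le s-1$, Remark \ref{h03} gives $\Phi (\Ff _j)\subseteq \Ff _{j-1}\otimes \Tt _{\Dd}\subseteq \Ff _j\otimes \Tt _{\Dd}$, so $\Ff _j$ itself lies in $\Ss (k(j),\Ee ,\Phi)$ (this underlies Lemma \ref{h04}). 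When $1\le k< r_1$, Remark \ref{h03} specializes to $\Phi (\Ff _1)\subseteq \Ff _0\otimes \Tt _{\Dd}=0$, so $\Phi$ kills $\Ff _1$ and any rank-$k$ subsheaf of $\Ff _1$ is automatically $\Phi$-invariant; such subsheaves exist since $\mathrm{rk}(\Ff _1)=r_1>k$. When $k(h)<k<k(h+1)$ for some $h\in \{1,\dots ,s-1\}$, Proposition \ref{h06} produces $\Aa =u^{-1}(\Bb)\in \Ss (k,\Ee ,\Phi)$ for a maximal-degree subsheaf $\Bb$ of the suitable rank inside $\Ff _{h+1}/\Ff _h$.

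The step that I expect to demand the most attention is verifying that these three families genuinely partition $\{1,\dots ,r-1\}$ with no omission, especially at the two boundary blocks: the lowest one, $1\le k<r_1$, which is not handled by Proposition \ref{h06} at all but rather by the vanishing $\Phi (\Ff _1)=0$, and the highest one, $k(s-1)<k<r$, which must be read off Proposition \ref{h06} with $h=s-1$ even though the nominal index range quoted there stops earlier. Once the case list is checked to be exhaustive, combining it with the boundedness of the first paragraph gives that $\max _{\Aa \in \Ss (k,\Ee ,\Phi)} r\deg \Aa$ is a well-defined integer for every $k$, which is the assertion.
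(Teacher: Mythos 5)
Your proof is correct and is essentially the paper's own argument: the paper disposes of this corollary in one line by citing exactly Lemma \ref{h05} and Proposition \ref{h06}, the same two pillars you rely on, together with the structural fact $\Phi (\Ff _i)\subseteq \Ff _{i-1}\otimes \Tt _{\Dd}$ of Remark \ref{h03}. If anything, your write-up is more complete than the paper's one-line proof, since you make explicit the degree bound (via Lemma \ref{aaa1}, or the inclusion $\Ss (k,\Ee ,\Phi )\subseteq \Ss (k,\Ee ,0)$) and the two blocks that the citation glosses over, namely $k=r_1+\cdots +r_j$ (where Remark \ref{h03}, i.e.\ Lemma \ref{h04}, puts $\Ff _j$ in $\Ss (k,\Ee ,\Phi )$) and $1\le k<r_1$ (where $\Phi (\Ff _1)=0$ makes every rank-$k$ subsheaf of $\Ff _1$ invariant).
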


Example \ref{h08} shows that in Proposition \ref{h06} we may have strict inequality; of course, to be in the set-up of Proposition \ref{h06} we need to have $r_{h+1}\ge 2$.

\begin{example}\label{h08}
Assume $g\ge 5$ and fix $h\in \{1,\dots ,s-2\}$ with $s\ge 3$. Set $r_{h+1}=r_{h+2}= 2$ and fix $r_i>0$ for $i\notin \{h+1,h+2\}$ and $d_i\in \ZZ$, $i=1,\dots ,s$ such that 
\begin{itemize}
\item $d_i/r_i> d_{i+1}/r_{i+1}$ for all $i=1,\dots ,s-1$ and 
\item $d_{h+1} =2d_{h+2}+1$. 
\end{itemize}
By a theorem of Nagata there is a stable bundle $\Ee _{h+1}$ of rank $2$ with degree $d_{h+1}$ and $g-1 \le s_1(\Ee _{h+1}) \le g$. Here, $s_1(\Ee _{h+1})$ is the only integer $t$ with $g-1\le t\le g$ and $d_{h+1}-t$ even. For $i\ne h+1$ we choose $\Ee _i$ to be any semistable bundle of degree $d_i$
and rank $r_i$. Set $\Ee := \oplus _{i=1}^{s} \Ee _i$ and then we have $\Ff _i= \oplus _{j=1}^{i} \Ee _j$ in the Harder-Narasimhan filtration (\ref{hn}) of $\Ee$. 

Take any $\Phi : \Ee \rightarrow \Ee \otimes \Tt _{\Dd}$ with $\mathrm{ker}(\Phi )\supseteq \oplus _{i=0}^{h+1} \Ee _i$, e.g. take $\Phi =0$ or, for certain $\Ee _1$ and $\Ee _s$ so that there is a nonzero map $\Ee _s\rightarrow \Ee _1\otimes \Tt _{\Dd}$, take a $2$-nilpotent map $\Phi$ with $\mathrm{ker}(\Phi ) \supseteq \Ff _{s-1}$. Let $\Aa \subset \Ff _{h+1}/\Ff _h$ be a line subbundle of maximal degree and then we have $\Bb := u^{-1}(\Aa)= (\sum _{i=1}^{h} \Ee _i)\oplus \Aa$. If we set $\Bb _1:= (\sum _{i=1}^{h} \Ee _i)\oplus \Ee _{h+2}$, then we have $\deg \Bb _1>\deg \Bb$. 
\end{example}

\begin{proposition}\label{h07}
Assume $r_i= 1$ for all $i$. For an integer $k\in \{1,\dots, r-1\}$ and any co-Higgs bundle $(\Ee ,\Phi )$ with $[\Ee] \in \UU_X (r;1,d_1;\dots ;1,d_r)$, we have
$$s_k(\Ee ) = s_k(\Ee ,\Phi) = k\deg \Ee -r\deg \Ff _k$$ 
and $\Ff _k$ is the only bundle achieving the minimum degree in $\Ss (k,\Ee ,\Phi )$.
\end{proposition}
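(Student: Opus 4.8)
The plan is to analyze the structure of $\Ss(k,\Ee,\Phi)$ under the hypothesis that the Harder-Narasimhan filtration is complete, i.e. $r_i = 1$ for all $i$, so each successive quotient $\Ff_i/\Ff_{i-1}$ is a line bundle of degree $d_i$ with strictly decreasing slopes $d_1 > d_2 > \cdots > d_r$. By Lemma \ref{h04} with $k(j) = j$ (since all $r_i = 1$), we already have the upper bound $s_k(\Ee,\Phi) \le k\deg\Ee - r\deg\Ff_k$, and since $\Phi(\Ff_k) \subseteq \Ff_{k-1}\otimes\Tt_\Dd \subseteq \Ff_k \otimes \Tt_\Dd$ by Remark \ref{h03}, the subbundle $\Ff_k$ lies in $\Ss(k,\Ee,\Phi)$ and achieves this value. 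Thus the content of the proposition is the \emph{reverse} inequality together with the uniqueness claim: any $\Aa \in \Ss(k,\Ee,\Phi)$ satisfies $\deg\Aa \le \deg\Ff_k$, with equality forcing $\Aa = \Ff_k$.

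First I would take an arbitrary rank-$k$ subbundle $\Aa \subseteq \Ee$ with $\Phi(\Aa) \subseteq \Aa\otimes\Tt_\Dd$ (we may assume $\Aa$ is saturated, as saturating only increases the degree), and compare its degree against $\deg\Ff_k$. The natural tool is to intersect $\Aa$ with the filtration: consider the induced maps $\Aa \cap \Ff_j \to \Ff_j/\Ff_{j-1}$ and the resulting jumps. Since each graded piece $\Ff_j/\Ff_{j-1}$ is a line bundle, the composite $\Aa \to \Ee \to \Ee/\Ff_{j}$ has image governed by which graded pieces $\Aa$ ``sees.'' The key estimate is that because the slopes are strictly decreasing, a rank-$k$ subbundle cannot have larger degree than the one built from the top $k$ slopes, namely $\Ff_k$; this is essentially the statement that $\Ff_k$ is the maximal-degree rank-$k$ subbundle of $\Ee$, which holds for any bundle with a complete Harder-Narasimhan filtration and does not even need the co-Higgs condition for the degree bound.

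The role of the $\Phi$-invariance condition is precisely to pin down uniqueness and to confirm that $s_k(\Ee,\Phi) = s_k(\Ee)$ rather than being strictly smaller. For the uniqueness, I would argue that if $\deg\Aa = \deg\Ff_k$ with $\mathrm{rk}(\Aa) = k$, then comparing $\Aa$ slope-by-slope against the filtration forces $\Aa \cap \Ff_j$ to have the maximal possible rank at every stage, which by the strict slope inequalities means $\Aa = \Ff_k$. The cleanest way is induction on $r$ or on the codimension of $\Aa \cap \Ff_k$ inside $\Aa$: if $\Aa \ne \Ff_k$, then $\Aa$ must contribute to some graded piece $\Ff_j/\Ff_{j-1}$ with $j > k$, whose slope $d_j$ is strictly smaller than all of $d_1,\dots,d_k$, and a counting argument shows this strictly lowers $\deg\Aa$ below $\deg\Ff_k$.

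I expect the main obstacle to be the careful bookkeeping in the degree comparison: one must track how $\Aa$ meets each one-dimensional graded quotient and convert the strict slope inequalities into a strict degree deficit whenever $\Aa \ne \Ff_k$. The $\Phi$-invariance itself is not the difficulty here (indeed the degree bound $s_k(\Ee) = k\deg\Ee - r\deg\Ff_k$ is a statement about the plain bundle $\Ee$, and $\Phi=0$ already lies in $\Ss(k,\Ee,\Phi)$ so $s_k(\Ee) \le s_k(\Ee,\Phi)$); rather, the subtlety is verifying that imposing $\Phi$-invariance does not enlarge the supremum of $\deg\Aa$ beyond $\deg\Ff_k$, i.e. that the maximal-degree subbundle $\Ff_k$ happens to already be $\Phi$-invariant (which it is, by Remark \ref{h03}), so that the constrained maximum equals the unconstrained one.
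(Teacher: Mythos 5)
Your proposal is correct, but it follows a genuinely different route from the paper's own proof of this proposition. The reduction is the same in both: since $\Ss (k,\Ee ,\Phi )\subseteq \Ss (k,\Ee ,0)$ and $\Ff _k\in \Ss (k,\Ee ,\Phi )$ by Remark \ref{h03} and Lemma \ref{h04}, everything follows once one proves that $\Ff _k$ is the unique maximal-degree rank-$k$ subsheaf of $\Ee$, a statement with no co-Higgs condition at all. For that core fact the paper argues by double induction on $k$ and $r$: taking $\Gg \in \Ss (k,\Ee ,0)$ of maximal degree (hence saturated, since torsion in $\Ee /\Gg$ would let one increase the degree), it splits into the case $\Ff _1\subset \Gg$, where the inductive hypothesis is applied to $\Gg /\Ff _1\subset \Ee /\Ff _1$, and the case $\Ff _1\nsubseteq \Gg$, where the saturation $\Nn$ of $\Ff _1+\Gg$ has rank $k+1$ and either the induction on $r$, or (when $k=r-1$) the injection $\Gg \hookrightarrow \Ee /\Ff _1$, forces $\deg \Gg <\deg \Ff _k$, a contradiction. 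You instead intersect $\Aa$ with the filtration, bounding $\deg \Aa$ by $\sum _{j\in S} d_j$ where $S$ is the set of graded pieces $\Ff _j/\Ff _{j-1}$ that $\Aa$ meets; the strict decrease $d_1>\cdots >d_r$ then gives both the bound and, for saturated $\Aa$, the uniqueness. This is precisely the argument the paper itself uses later for the more general Proposition \ref{h012}(i) (arbitrary $r_i$), so your route is legitimate and arguably cleaner: it avoids the double induction and generalizes immediately beyond complete filtrations, whereas the paper's induction is self-contained for this special case and produces the uniqueness statement along the way.

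One small point to tighten: your claim that $\Aa \ne \Ff _k$ forces a contribution from some graded piece with $j>k$ is only valid for saturated $\Aa$ (a non-saturated rank-$k$ subsheaf of $\Ff _k$ meets only the pieces with $j\le k$ yet differs from $\Ff _k$). You did assume saturation, and the non-saturated case is harmless for uniqueness because on a curve a nonzero torsion quotient has positive degree, so a non-saturated $\Aa$ can never attain the maximal degree; but this sentence should be said explicitly, exactly as the paper does when it observes that maximality of $\deg \Gg$ forces $\Ee /\Gg$ to be torsion-free.
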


\begin{proof}
By Remark \ref{h03} and Lemma \ref{h04}, we have $[\Ff _k]\in  \Ss (k,\Ee ,\Phi )$. Thus it is sufficient to prove that $\Ff _k$ is the only one achieving the minimum degree in $\Ss (k,\Ee ,0)$. Take any $[\Gg] \in \Ss (k,\Ee ,0)$ with maximal degree. The maximality condition on $\deg \Gg$ implies that $\Ee /\Gg$ has no torsion and so it is a vector bundle of rank $r-k$ on $X$. We use double induction on $k$ and $r$. The case $k=1$ is obvious, because $\Ff _1$ is the first step of the Harder-Narasimhan filtration of $\Ee$. 

Assume that $k$ is at least two and the proposition holds for trivial co-Higgs fields with any $k'\in \{1,\dots ,k-1\}$ and any bundles $\Ee '$ whose Harder-Narasimhan filtration has rank one bundles as subquotients. 

Assume for the moment $\Ff _1\subset \Gg$. Since $\Ff _1$ is saturated in $\Ee$, i.e. $\Ee /\Ff _1$ has no torsion, $\Ff _1$ is saturated in $\Gg$ and $\Gg /\Ff _1$ is a rank $k-1$ subsheaf of the vector bundle $[\Ee /\Ff _1]\in \UU_X (r-1;1,d_2;\dots ;1,d_r)$. The inductive assumption gives $\deg \Gg /\Ff _1\le \deg \Ff _k/\Ff _1$, with equality if and only if $\Gg /\Ff _1 \cong \Ff _k/\Ff _1$, i.e. $\deg \Gg \le \deg \Ff _k$ with equality if and only if $\Gg \cong\Ff _k$.

Now assume $\Ff _1\nsubseteq \Gg$. Since $\Gg$ is saturated in $\Ee$, this means that $\Ff _1+\Gg$ has rank $k+1$. Let $\Nn$ be the saturation of $\Ff _1+\Gg$ in $\Ee$, and then we have $\deg \Nn \ge \deg \Ff _1 + \deg \Gg$ and $\Nn /\Ff _1$ is a rank $k$ subsheaf of $\Ee /\Ff _1$. If $r\ge k-2$, then by the inductive assumption on $r$ we have $\deg \Nn /\Ff _1\le \deg \Ff _{k+1}/\Ff _1 < \deg \Ff _k-\deg \Ff _1$ and so $\deg \Gg < \deg \Ff _k$, a contradiction. Thus we may assume $k=r-1$ and so $\Nn \cong\Ee$. Since $\Ff _1+\Gg$ has rank $k+1$, the natural map $\Gg \rightarrow \Ee /\Ff _1$ is injective. Thus we have $\deg \Gg \le \deg \Ee -\deg \Ff _1< \deg \Ff _{r-1}$, a contradiction.
\end{proof}

Now for $k\in \{1,\dots, r-1\}$ set 
$$ \delta _k(\Ee, \Phi ):= \max _{\Aa \in \Ss (k,\Ee ,\Phi)} \deg (\Aa ),$$
$\delta _0(\Ee ,\Phi ):= 0$ and $\delta _r(\Ee ,\Phi ):= \deg (\Ee)$. In case $\Phi=0$, we simply denote it by $\delta _k(\Ee)$.

\begin{proposition}\label{h012}
Fix $h\in \{1,\dots, s\}$ with $s\ge 2$ and set $\rho:=r_1+\cdots + r_h$. For $[\Ee] \in \UU_X (s;r_1,d_1;\dots ;r_s,d_s)$, we have
\begin{itemize}
\item [(i)] $s_\rho (\Ee ) = \rho \deg \Ee  -k\deg \Ff _h$ and $\Ff _h\subset \Ee$ is the only subsheaf of rank $\rho$ with degree $\deg \Ff _h$;
\item [(ii)] $\deg \Ee \le \Ff _{h-1} + (k-\rho )d_h/r_h$ for $k$ with $\rho -r_h < k < \rho$ and $[\Gg] \in \Ss (k,\Ee ,0)$;
\item [(iii)] $\delta_k(\Ee)-\deg (\Ff_{h-1})$ for $k$ with $\rho -r_h < k < \rho$, equals 
$$\max \{ \sum _{j=h}^{s} \delta _{t_j}(\Ff _j/\Ff _{j-1})~\big|~t_h+\cdots +t_s =k+r_h-\rho \text{ with } 0 \le t_j \le r_j \text{ for all }j\}.$$
\end{itemize}
\end{proposition}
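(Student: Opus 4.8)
All three parts are statements about the maximal-degree function $\delta_k(\Ee)=\max_{\Aa\in\Ss(k,\Ee,0)}\deg\Aa$ expressed through the Harder--Narasimhan filtration (\ref{hn}); since every rank-$k$ subsheaf lies in $\Ss(k,\Ee,0)$, the field $\Phi$ plays no role here. Two elementary operations drive everything. First, for any subsheaf $\Gg\subseteq\Ee$, intersecting with the filtration gives pieces $\mathrm{gr}_i\Gg:=(\Gg\cap\Ff_i)/(\Gg\cap\Ff_{i-1})\hookrightarrow\Ff_i/\Ff_{i-1}$, so that, writing $t_i:=\mathrm{rk}\,\mathrm{gr}_i\Gg$, we have $\sum_i t_i=\mathrm{rk}\,\Gg$ and $\deg\Gg=\sum_i\deg\mathrm{gr}_i\Gg\le\sum_i\delta_{t_i}(\Ff_i/\Ff_{i-1})$. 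Second, for a subsheaf $\bar\Gg\subseteq\Ee/\Ff_{h-1}$ its preimage $\pi^{-1}(\bar\Gg)$ is a subsheaf of $\Ee$ of rank $\mathrm{rk}\,\Ff_{h-1}+\mathrm{rk}\,\bar\Gg$ and degree $\deg\Ff_{h-1}+\deg\bar\Gg$. Throughout I use the slope ordering $\mu_+(\Ee/\Ff_{h-1})=d_h/r_h<d_{h-1}/r_{h-1}=\mu_-(\Ff_{h-1})$, together with the semistability estimate $\delta_{t_i}(\Ff_i/\Ff_{i-1})\le t_i\, d_i/r_i$.

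\textbf{Parts (i) and (ii).} Combining the first operation with the semistability estimate gives $\deg\Gg\le\sum_i t_i\,(d_i/r_i)$. Since $d_1/r_1>\cdots>d_s/r_s$, the linear program $\max\sum t_i\,(d_i/r_i)$ subject to $\sum t_i=\rho$ and $0\le t_i\le r_i$ has the \emph{unique} optimum $t_i=r_i$ for $i\le h$, with value $\deg\Ff_h$. Hence $\delta_\rho(\Ee)=\deg\Ff_h$, realized by $\Ff_h$ itself, which yields $s_\rho(\Ee)=\rho\deg\Ee-r\deg\Ff_h$ (the coefficient displayed as $k\deg\Ff_h$ should read $r\deg\Ff_h$). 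For uniqueness, equality in the program forces $t=(r_1,\dots,r_h,0,\dots,0)$ and each $\mathrm{gr}_i\Gg$ to equal $\Ff_i/\Ff_{i-1}$ for $i\le h$; thus $\Gg\subseteq\Ff_h$ with equal rank and degree, so $\Gg=\Ff_h$, exactly as in the induction of Proposition \ref{h07}. For (ii), the same program with $\sum t_i=k$ and $\rho-r_h<k<\rho$ has optimum $t_i=r_i$ $(i<h)$, $t_h=k-(\rho-r_h)$, giving $\deg\Gg\le\deg\Ff_{h-1}+(k-(\rho-r_h))\,d_h/r_h$ (the intended reading of the displayed bound).

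\textbf{Part (iii): reduction by filling $\Ff_{h-1}$.} I would first prove the exact identity $\delta_k(\Ee)=\deg\Ff_{h-1}+\delta_{k'}(\Ee/\Ff_{h-1})$ with $k'=k-(\rho-r_h)$. The inequality ``$\ge$'' is the preimage operation. For ``$\le$'', set $\Kk:=\Gg\cap\Ff_{h-1}$ of rank $\kappa$, so $\Gg/\Kk\hookrightarrow\Ee/\Ff_{h-1}$ and $\deg\Gg\le\delta_\kappa(\Ff_{h-1})+\delta_{k-\kappa}(\Ee/\Ff_{h-1})$; writing $\sigma:=(\rho-r_h)-\kappa\ge0$ and $\Ww:=\Ee/\Ff_{h-1}$, it suffices to show
$$\delta_{k'+\sigma}(\Ww)-\delta_{k'}(\Ww)\ \le\ \deg\Ff_{h-1}-\delta_\kappa(\Ff_{h-1}).$$
The right side is the degree of a rank-$\sigma$ quotient of $\Ff_{h-1}$, hence at least $\sigma\,\mu_-(\Ff_{h-1})=\sigma\,d_{h-1}/r_{h-1}$. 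The left side is at most $\sigma\,\mu_+(\Ww)=\sigma\,d_h/r_h$: taking a maximal rank-$(k'+\sigma)$ subsheaf of $\Ww$, following its own Harder--Narasimhan filtration, and discarding its bottom rank-$\sigma$ quotient (of slope $\le\mu_+(\Ww)$) produces a rank-$k'$ subsheaf of degree $\ge\delta_{k'+\sigma}(\Ww)-\sigma\,\mu_+(\Ww)$. Since $d_{h-1}/r_{h-1}>d_h/r_h$, the inequality holds, so filling $\Ff_{h-1}$ is optimal and the reduction is an equality.

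\textbf{The crux.} After this reduction, (iii) amounts to the assertion $\delta_{k'}(\Ee/\Ff_{h-1})=\max\{\sum_{j\ge h}\delta_{t_j}(\Ff_j/\Ff_{j-1})\mid \sum_{j\ge h}t_j=k'\}$ with $0<k'<r_h$. The upper bound ``$\le$'' is again the first operation, applied now to $\Ee/\Ff_{h-1}$. The reverse inequality is the step I expect to be the main obstacle: given an optimizing distribution $(t_j)$ and maximal subsheaves $\Bb_j\subseteq\Ff_j/\Ff_{j-1}$, one must assemble an honest rank-$k'$ subsheaf of $\Ee/\Ff_{h-1}$ of degree $\sum_j\deg\Bb_j$. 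Lifting the $\Bb_j$ out of the lower subquotients is governed by the extension classes of the filtration, and a greedy subsheaf supported on the top piece realizes only $\delta_{k'}(\Ff_h/\Ff_{h-1})$, which need not equal the maximum. I would therefore carry out the realization using the splitting $\Ee/\Ff_{h-1}\cong\bigoplus_{j\ge h}\Ff_j/\Ff_{j-1}$, where $\bigoplus_j\Bb_j$ works at once; this is the regime in which the invariant is actually computed (cf.\ the split bundles of Example \ref{h08} and the degree gaps of Remark \ref{h1}). For a general bundle the displayed expression remains an upper bound for $\delta_k(\Ee)-\deg\Ff_{h-1}$, with equality precisely when the optimal distribution is realizable, and it is this realizability — not the slope bookkeeping of (i), (ii) and the reduction above — that is the delicate point.
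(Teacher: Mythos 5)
Your treatment of (i) and (ii) is exactly the paper's proof: intersect a maximal-degree subsheaf $\Gg$ with the Harder--Narasimhan filtration, bound each graded piece $(\Gg\cap\Ff_i)/(\Gg\cap\Ff_{i-1})\hookrightarrow\Ff_i/\Ff_{i-1}$ by $k_i d_i/r_i$ via semistability, and use the strict slope ordering to see that the unique optimal distribution fills the pieces from the top; your reading of the two typos ($r\deg\Ff_h$ in (i), $\deg\Gg\le\deg\Ff_{h-1}+(k-\rho+r_h)d_h/r_h$ in (ii)) is also what the paper's proof actually establishes.

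For (iii) there is nothing in the paper to compare against: the paper's entire proof is the sentence ``Part (iii) comes directly from the definition of $\delta_k(\Ee)$.'' Your instinct that a genuine argument is missing is correct, but your two fallback claims --- that the displayed maximum is always an upper bound for $\delta_k(\Ee)-\deg\Ff_{h-1}$, and that equality holds when the bundle splits --- are both false, so your proposal does not close the gap; in fact part (iii) is false. Take $g\ge 4$, a line bundle $\Ll$ of degree $d_1$, and a stable rank-$2$ bundle $\Bb$ of degree $2d_1-2$ with $s_1(\Bb)\ge g-1$ (such $\Bb$ exists by the theorem of Nagata quoted in Example \ref{h08}). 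Then $\Ee:=\Ll\oplus\Bb$ lies in $\UU_X(2;1,d_1;2,2d_1-2)$ with $\Ff_1=\Ll$. Taking $h=s=2$, $\rho=3$, $k=2$, the displayed maximum is $\delta_1(\Bb)=d_1-1-s_1(\Bb)/2$, so (iii) asserts $\delta_2(\Ee)=2d_1-1-s_1(\Bb)/2\le 2d_1-\tfrac52$; but the direct summand $\Bb\subset\Ee$ has rank $2$ and degree $2d_1-2$, so $\delta_2(\Ee)\ge 2d_1-2$. Thus the formula fails for a split bundle, and in the direction you declared safe; Corollary \ref{h014}, which the paper deduces from (iii), fails on the same example.

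The false step in your write-up is the reduction: the inequality $\delta_{k'+\sigma}(\Ww)-\delta_{k'}(\Ww)\le\sigma\mu_+(\Ww)$ does not hold, because increments of $\delta_\bullet$ on a semistable sheaf are governed by its Segre invariants, not by its slope. For $\Ww$ stable of rank $2$ one has $\delta_2(\Ww)-\delta_1(\Ww)=\mu(\Ww)+s_1(\Ww)/2>\mu_+(\Ww)$; your ``discard a bottom rank-$\sigma$ quotient of slope $\le\mu_+(\Ww)$'' move is impossible precisely because every rank-$\sigma$ quotient of a semistable sheaf has slope at least the slope, typically with a genus-size excess. This is exactly what the example above exploits: once the Segre invariant of a higher piece exceeds the slope gap, filling $\Ff_{h-1}$ first is not optimal. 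Your realizability worry about the crux is legitimate as well: for a non-split extension $0\to\Ff_1\to\Ee\to\Ll'\oplus\Mm\to 0$ with $\Ff_1$ a general stable rank-$2$ bundle of slightly larger slope and both pullback classes in $\mathrm{Ext}^1(\Ll',\Ff_1)$ and $\mathrm{Ext}^1(\Mm,\Ff_1)$ nonzero, a degree-computation shows $\delta_1(\Ee)$ is strictly below the displayed maximum (this also refutes Corollary \ref{h013}). So the identity in (iii) fails in both directions, and no statement of this shape can hold without hypotheses killing the Segre-invariant error terms of the subquotients --- which is exactly why Proposition \ref{h07}, where all $r_i=1$, survives while (iii) does not.
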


\begin{proof}
Set $\mu _i:= \mu(\Ff_i/\Ff_{i-1})=d_i/r_i$ for $i=1,\dots ,s$ and let $\Gg \subseteq \Ee$ be a rank $\rho$ subsheaf of maximal degree. Then part (i) is trivial if $s=h$, because $\Gg \cong\Ee$ in this case. Thus we may assume that $h< s$. Set $a_0=0$ and 
$$a_i:= \mathrm{ker}(\Ff _i\cap \Gg) \text{ with }k_i:= a_i -a_{i-1},$$
for $i=1,\dots ,s$. If we denote by $\Rr _i\subseteq \Ff _i/\Ff _{i-1}$ the image of $\Ff _i\cap \Gg$ by the quotient map $\pi _i: \Ff _i\rightarrow\Ff _i/\Ff _{i-1}$, then $\Rr _i$ is trivial, i.e. $\Ff _i\cap \Gg \subseteq \Ff _{i-1}$, if and only if $k_i=0$. Setting $S:= \{i\in \{1,\dots ,s\}\mid k_i>0\}$, we have $\sum _{i=1}^{s} k_i=\sum _{i\in S} k_i=\rho$ and that $\Gg \cong \Ff _h$ if and only if $k_i=r_i$ for all $i\le h$, or equivalently $k_i=0$ for all $i>h$. Since $\Ff _0$ is trivial, we have $\Rr _1 \cong\Ff _1\cap \Gg$. Thus we have $\deg \Gg  =\sum _{i\in S} \deg \Rr _i$. Since each $\Ff _i/\Ff _{i-1}$ is semistable, we have $\deg \Rr _i\le k_i\mu _i$ for all $i\in S$ and so we may use that $\mu _i>\mu _j$ for all $i<j$ to get part (i). 

For part (ii) let $\Gg\subset \Ee$ be a rank $k$ subsheaf of maximal degree and define $k_i$, $S\subseteq \{1,\dots ,s\}$ and the sheaves $\Rr_i \subset \Ff _i/\Ff _{i-1}$ as above. Then we have $\sum _{i\in S} k_i =k$ and $\deg \Gg\le \sum _{i\in S} k_i\mu _i$ and again we may use that $\mu _i>\mu _j$ for all $i<j$, to get the assertion. Part (iii) comes directly from the definition of $\delta _k(\Ee)$.
\end{proof}

As immediate corollaries of Theorem \ref{h012} we get the following.

\begin{corollary}\label{h013}
We have $s_k(\Ee ,\Phi ) =s_k(\Ee )=s_k(gr (\Ee ))$ for all $k$.
\end{corollary}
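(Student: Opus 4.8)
The plan is to work throughout with the equivalent quantity $\delta_k(\Ee,\Phi)=\max_{\Aa\in\Ss(k,\Ee,\Phi)}\deg\Aa$, so that $s_k(\Ee,\Phi)=k\deg\Ee-r\delta_k(\Ee,\Phi)$, and to establish the two equalities $\delta_k(\Ee,\Phi)=\delta_k(\Ee)=\delta_k(gr(\Ee))$. The second equality is a bookkeeping consequence of Proposition \ref{h012}, while the first splits into a free inequality and a matching construction of a $\Phi$-invariant maximizer.

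First I would dispose of $\delta_k(\Ee)=\delta_k(gr(\Ee))$. Proposition \ref{h012} expresses $\delta_k(\Ee)$ purely in terms of the partial degrees $\deg\Ff_{h-1}$ and the invariants $\delta_{t_j}(\Ff_j/\Ff_{j-1})$ of the semistable subquotients: part (i) covers the boundary values $k=\rho=r_1+\cdots+r_h$, and part (iii) covers the intermediate ranks. Applying the very same proposition to $gr(\Ee)=\oplus_i\Ff_i/\Ff_{i-1}$, whose Harder--Narasimhan filtration has exactly the same subquotients $\Ff_i/\Ff_{i-1}$ and the same partial degrees, produces an identical right-hand side. Hence $\delta_k(\Ee)=\delta_k(gr(\Ee))$ for every $k$, and therefore $s_k(\Ee)=s_k(gr(\Ee))$.

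For $\delta_k(\Ee,\Phi)=\delta_k(\Ee)$ one inequality is free: since $\Ss(k,\Ee,\Phi)\subseteq\Ss(k,\Ee,0)$, maximizing over the smaller set gives $\delta_k(\Ee,\Phi)\le\delta_k(\Ee)$, equivalently $s_k(\Ee)\le s_k(\Ee,\Phi)$. For the reverse inequality I would exhibit, for each $k$, a subsheaf of degree $\delta_k(\Ee)$ lying in $\Ss(k,\Ee,\Phi)$. The engine is Remark \ref{h03}: since $\gamma<0$ we have $\Phi(\Ff_i)\subseteq\Ff_{i-1}\otimes\Tt_\Dd$, so any subsheaf $\Gg$ sandwiched between consecutive steps $\Ff_{i-1}\subseteq\Gg\subseteq\Ff_i$ satisfies $\Phi(\Gg)\subseteq\Phi(\Ff_i)\subseteq\Ff_{i-1}\otimes\Tt_\Dd\subseteq\Gg\otimes\Tt_\Dd$ and is automatically $\Phi$-invariant. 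At the boundary ranks $k=\rho$, Proposition \ref{h012}(i) gives the unique maximizer $\Ff_h$, which is of this type; and whenever the optimum in Proposition \ref{h012}(iii) is attained by concentrating all the available rank in the top remaining subquotient $\Ff_{h+1}/\Ff_h$, the preimage $\Aa=u^{-1}(\Bb)$ built in Proposition \ref{h06} lies between $\Ff_h$ and $\Ff_{h+1}$ and realizes $\delta_k(\Ee)$.

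The step I expect to be the main obstacle is matching these two descriptions in general. Proposition \ref{h06} produces a $\Phi$-invariant subsheaf concentrated in a single subquotient, whereas the maximum in Proposition \ref{h012}(iii) ranges over all distributions of the available rank among $\Ff_h/\Ff_{h-1},\dots,\Ff_s/\Ff_{s-1}$, and the optimum need not be concentrated in one step. I must therefore show that a maximal-degree subsheaf $\Gg$ realizing the optimal distribution --- even one whose image meets several subquotients --- can still be taken $\Phi$-invariant. The point is that the only possible obstruction is an induced cross-map from a higher-index subquotient into a lower one twisted by $\Tt_\Dd$, and the standing hypothesis $\gamma<0$ together with the maximality of $\deg\Gg$ should force every such cross-map to vanish: a nonzero map $\Ff_j/\Ff_{j-1}\to(\Ff_i/\Ff_{i-1})\otimes\Tt_\Dd$ with $i<j$ would need image of too large a degree to fit into $(\Ff_i/\Ff_{i-1})\otimes\Tt_\Dd$, whose maximal subsheaf degrees drop by $|\gamma|$. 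Pinning down this vanishing is the technical heart of the argument; once it is in place, $\delta_k(\Ee,\Phi)\ge\delta_k(\Ee)$ follows and all three quantities coincide.
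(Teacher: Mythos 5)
Your route is the one the paper itself intends: the paper offers no argument for this corollary beyond the phrase ``as immediate corollaries of Theorem \ref{h012}'', and your three ingredients --- Proposition \ref{h012} for $s_k(\Ee)=s_k(gr(\Ee))$, the inclusion $\Ss(k,\Ee,\Phi)\subseteq\Ss(k,\Ee,0)$ for $s_k(\Ee)\le s_k(\Ee,\Phi)$, and a $\Phi$-invariant maximizer for the reverse inequality --- are exactly what such an ``immediate'' derivation must consist of. Your handling of the boundary ranks $k=r_1+\cdots+r_h$ (the unique maximizer $\Ff_h$ of Proposition \ref{h012}(i) is $\Phi$-invariant by Remark \ref{h03}) and of the case where the optimum in Proposition \ref{h012}(iii) concentrates in one subquotient (preimages as in Proposition \ref{h06} are sandwiched between consecutive steps of (\ref{hn}), hence invariant) is correct.

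However, the step you flag as ``the technical heart'' is a genuine gap, and your sketch does not close it. The paper's own Example \ref{h08} shows that the optimal distribution in Proposition \ref{h012}(iii) really can be spread over several subquotients, so this case is unavoidable. The vanishing you need is, moreover, not a statement about maps $\Ff_j/\Ff_{j-1}\to(\Ff_i/\Ff_{i-1})\otimes\Tt_{\Dd}$: what must vanish are the components of $\Phi$ restricted to the pieces $\Rr_j\cong(\Gg\cap\Ff_j)/(\Gg\cap\Ff_{j-1})$ of a chosen maximizer $\Gg$, reduced modulo the part of $\Ee\otimes\Tt_{\Dd}$ already lying in $\Gg\otimes\Tt_{\Dd}$. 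These pieces are only maximal-degree subsheaves of the semistable sheaves $\Ff_j/\Ff_{j-1}$; they need not be semistable, and their minimal slopes can sit roughly $g$ below $\mu_j$. Worse, a large upward gap $\mu_i-\mu_j$ makes a nonzero cross-component \emph{easier} to exist (the target has $\mu_+=\mu_i+\gamma$, so Lemma \ref{aaa1} permits it as soon as $\mu_i-\mu_j$ exceeds about $|\gamma|-g$), while the compensating degree gained by re-distributing rank into the $i$-th subquotient is only of the same order of magnitude; the trade-off is tight and would require quantitative Nagata/Mukai--Sakai-type estimates on the increments $\delta_{t+1}-\delta_t$ of the subquotients, which appear neither in your proposal nor in the paper. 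Finally, be aware that the half you call bookkeeping is correct only \emph{relative to} Proposition \ref{h012}: the lower-bound direction of part (iii) requires lifting subsheaves of $\Ee/\Ff_{h-1}$ back into $\Ee$, which the paper never proves (its proof of (iii) is one sentence) and which can actually fail for non-split extensions whose adjacent slopes are close --- a configuration the standing assumption $\mu_s\le\mu_1+\gamma$ does not exclude. So citing the proposition is formally legitimate, but both equalities of the corollary rest on steps the paper glosses over; your write-up, to its credit, exposes the first of them rather than hiding it.
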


\begin{corollary}\label{h014}
For $k$ with $r -r_s < k < r$, we have 
$$\delta _k(\Ee ,\Phi )=\delta _k(\Ee )= d_1+\cdots +d_{s-1} +\delta _{k+r_s-r}(\Ee /\Ff _{s-1}).$$
\end{corollary}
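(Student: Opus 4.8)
The plan is to deduce both equalities directly from Proposition \ref{h012}(iii), specialized to $h=s$, together with the elementary observation (built on Remark \ref{h03}) that any subsheaf containing $\Ff_{s-1}$ is automatically $\Phi$-invariant. So the corollary really is immediate, and the work is bookkeeping plus one structural remark.

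First I would handle the second equality. For the stated range $r-r_s<k<r$, set $h=s$ in Proposition \ref{h012}, so that $\rho=r_1+\cdots+r_s=r$ and the hypothesis $\rho-r_h<k<\rho$ becomes exactly $r-r_s<k<r$. Part (iii) then expresses $\delta_k(\Ee)-\deg(\Ff_{s-1})$ as a maximum over tuples with $t_h+\cdots+t_s=k+r_s-\rho$; since $h=s$ the sum has the single summand $j=s$, forcing $t_s=k+r_s-r$, and the maximum collapses to $\delta_{k+r_s-r}(\Ff_s/\Ff_{s-1})$. Because $\Ff_s=\Ee$ we have $\Ff_s/\Ff_{s-1}=\Ee/\Ff_{s-1}$, and $\deg\Ff_{s-1}=d_1+\cdots+d_{s-1}$, which yields $\delta_k(\Ee)=d_1+\cdots+d_{s-1}+\delta_{k+r_s-r}(\Ee/\Ff_{s-1})$.

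Next I would establish $\delta_k(\Ee,\Phi)=\delta_k(\Ee)$. The inequality $\delta_k(\Ee,\Phi)\le\delta_k(\Ee)$ is automatic, since $\Ss(k,\Ee,\Phi)\subseteq\Ss(k,\Ee,0)$ and the latter maximum is taken over a larger set. For the reverse inequality I would exhibit a $\Phi$-invariant rank-$k$ subsheaf realizing $\delta_k(\Ee)$. Let $\Bb\subseteq\Ee/\Ff_{s-1}$ be a maximal-degree subsheaf of rank $k+r_s-r$ (a valid rank, as $0<k+r_s-r<r_s$ in this range), and set $\Gg:=\pi^{-1}(\Bb)$ under the quotient map $\pi\colon\Ee\to\Ee/\Ff_{s-1}$. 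From $0\to\Ff_{s-1}\to\Gg\to\Bb\to 0$ the sheaf $\Gg$ has rank $(r-r_s)+(k+r_s-r)=k$ and degree $\deg\Ff_{s-1}+\delta_{k+r_s-r}(\Ee/\Ff_{s-1})=\delta_k(\Ee)$ by the previous step. Since $\Ff_{s-1}\subseteq\Gg$, Remark \ref{h03} gives $\Phi(\Gg)\subseteq\Phi(\Ee)=\Phi(\Ff_s)\subseteq\Ff_{s-1}\otimes\Tt_{\Dd}\subseteq\Gg\otimes\Tt_{\Dd}$, so $\Gg\in\Ss(k,\Ee,\Phi)$. Hence $\delta_k(\Ee,\Phi)\ge\deg\Gg=\delta_k(\Ee)$, and equality follows.

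There is no genuine obstacle here; all the content lives in Proposition \ref{h012} and Remark \ref{h03}. The only points demanding care are the index bookkeeping (verifying that $h=s$ collapses the tuple maximum to a single term, that $k+r_s-r$ lies in the admissible range, and that $\Ee/\Ff_{s-1}$ is literally $\Ff_s/\Ff_{s-1}$), and the key structural remark that membership in $\Ss(k,\Ee,\Phi)$ is free of charge once a subsheaf contains $\Ff_{s-1}$. It is precisely this last point—valid only because $\gamma<0$ pushes $\Phi(\Ee)$ into $\Ff_{s-1}\otimes\Tt_{\Dd}$—that lets the a priori smaller invariant $\delta_k(\Ee,\Phi)$ coincide with the purely numerical $\delta_k(\Ee)$ in this top range of $k$.
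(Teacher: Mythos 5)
Your proof is correct and takes essentially the same route as the paper, which states Corollary \ref{h014} as an immediate consequence of Proposition \ref{h012}: specializing part (iii) to $h=s$ gives the second equality, and the $\Phi$-invariance of $\pi^{-1}(\Bb)\supseteq \Ff_{s-1}$ via Remark \ref{h03} gives $\delta_k(\Ee,\Phi)=\delta_k(\Ee)$, which is precisely the bookkeeping the authors leave implicit.
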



\subsection{Simplicity}
Again let $X$ be a smooth curve of genus $g$. Fix $\Rr \in \mathrm{Pic}(X)$ and set $\gamma := \deg \Rr$. For a map $\Phi : \Ee \rightarrow \Ee \otimes \Rr$, set
$$\End (\Ee ,\Phi ):= \{f\in \End (\Ee )\mid \hat{f}\circ \Phi = \Phi \circ f\},$$ 
where $\hat{f}$ is the map $f\otimes \mathrm{id}_{\Rr}: \Ee\otimes \Rr \rightarrow \Ee \otimes \Rr$. 

In case $\gamma >0$, it often happens that $\End (\Ee ,\Phi )$ is properly contained in $\End (\Ee )$ and $(\Ee ,\Phi )$ is simple with $\Ee$ not simple, e.g. stable Higgs fields when $g\ge 2$
or stable co-Higgs fields when $g=0$. In this short section, we consider the case $\gamma < 0$ and show why this is seldom the case for $\gamma<0$. 

We assume that $[\Ee] \in \UU_X (s;r_1,d_1;\cdots ;r_s,d_s)$ with the Harder-Narasimhan filtration (\ref{hn}) and that $\Phi : \Ee \rightarrow \Ee \otimes \Rr$ is nonzero and so $s\ge 2$. Note that every endomorphism of $\Ee$ preserves the Harder-Narasimhan filtration of $\Ee$. By Remark \ref{h03}, every endomorphism of $(\Ee ,\Phi )$ also preserves the Harder-Narasimhan filtration of $\Ee$. Now set $\Kk := \mathrm{ker}(\Phi)$ and then we have $\Kk \supseteq \Ff _1$ by the case $i=1$ of Remark \ref{h03} or Lemma \ref{n1} below.
For two maps $\phi \in \End (\Ee /\Ff _{r-1})$ and $\psi \in \Hom (\Ee /\Ff _{r-1},\Kk )$, define a map $f: \Ee \rightarrow \Ee$ to be the following composition:
$$\Ee \to \Ee/\Ff_{r-1} \to \Kk \hookrightarrow \Ee,$$
where the first map is the natural quotient and the second map is given by $\psi\circ\phi$. By the definition of $\Kk$, we have $\Phi \circ f =0$. If $\Phi$ is $2$-nilpotent, i.e. $\mathrm{Im}(\Phi)\subseteq \Kk \otimes \Rr$, e.g. if $s=2$ by Lemma \ref{s1}, we have $\hat{f}\circ \Phi =0$. So, if $\Phi$ is $2$-nilpotent and $\Hom (\Ee/\Ff _{r-1},\Kk )\ne 0$, then we have $\End (\Ee ,\Phi )\not\cong\CC$. We also see from $\Ff _1\subseteq \Kk$ that if $\Hom (\Ee /\Ff _{r-1},\Ff _1)\ne 0$, then we have $\End (\Ee ,\Phi )\not \cong \CC$. By Riemann-Roch, we get $\Hom (\Ee /\Ff _{r-1},\Ff _1)\ne 0$, if $d_s/r_s < d_1/r_1+g-1$. Since $\Phi \ne 0$ and each $\Ff _i/\Ff _{i-1}$ is semistable, we have $d_s/r_s \le d_1/r_1+\gamma$. Now if $\Rr \cong \Tt _{\Dd}$, then we have $\gamma \le 2-2g$ and so $d_s/r_s < g-1 +d_1/r_1$
for all $g\ge 2$. Thus we get the following. 
\begin{proposition}\label{yyy}
For $[\Ee]\in \UU_X(s; r_1, d_1;\dots;r_s,d_s)$ with a nonzero co-Higgs field $\Phi$ on a smooth curve $X$ of genus $g\ge 2$, the pair $(\Ee, \Phi)$ is not simple. 
\end{proposition}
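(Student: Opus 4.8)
The plan is to establish non-simplicity by producing an endomorphism of the pair $(\Ee,\Phi)$ that is not a scalar; the endomorphism I construct will be nilpotent, so it suffices to exhibit one nonzero $f$ with $\hat f\circ\Phi=\Phi\circ f$ and $f^2=0$. First I would record what $\gamma<0$ forces. If $\Ee$ were semistable then $\mu_-(\Ee)=\mu_+(\Ee)$, and (\ref{min}) would read $0\le\gamma$, contradicting $\gamma<0$; hence $\Phi\ne0$ guarantees $s\ge2$. Writing $\Kk:=\mathrm{ker}(\Phi)$, Remark \ref{h03} with $i=1$ yields $\Phi(\Ff_1)\subseteq\Ff_0\otimes\Tt_{\Dd}=0$, so $\Ff_1\subseteq\Kk$, while the case $i=s$ yields $\Image(\Phi)=\Phi(\Ff_s)\subseteq\Ff_{s-1}\otimes\Tt_{\Dd}$. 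These two containments are the only structural facts I will need.

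Next I would build the candidate endomorphism. Given any nonzero $\psi\in\Hom(\Ee/\Ff_{s-1},\Ff_1)$, set $f$ to be the composite $\Ee\rightarrow\Ee/\Ff_{s-1}\xrightarrow{\psi}\Ff_1\hookrightarrow\Ee$. Since $\Image(f)\subseteq\Ff_1\subseteq\Kk=\mathrm{ker}(\Phi)$, one gets $\Phi\circ f=0$ immediately. For the other composite, note that $f$ factors through the quotient $\Ee\rightarrow\Ee/\Ff_{s-1}$, so $f$ vanishes on $\Ff_{s-1}$ and hence $\hat f$ vanishes on $\Ff_{s-1}\otimes\Tt_{\Dd}$; as $\Image(\Phi)\subseteq\Ff_{s-1}\otimes\Tt_{\Dd}$, this forces $\hat f\circ\Phi=0$. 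Thus $\hat f\circ\Phi=0=\Phi\circ f$, i.e. $f\in\End(\Ee,\Phi)$. Finally $s\ge2$ gives $\Ff_1\subseteq\Ff_{s-1}$, so $f(\Ff_1)\subseteq f(\Ff_{s-1})=0$ and $f^2=0$; since $\psi\ne0$ makes $f\ne0$, this $f$ is a nonzero nilpotent, in particular not a scalar, so $\End(\Ee,\Phi)\not\cong\CC$.

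The one genuinely nontrivial input is the nonvanishing of $\Hom(\Ee/\Ff_{s-1},\Ff_1)$, which I would obtain by Riemann--Roch. Here $\Ee/\Ff_{s-1}\cong\Ff_s/\Ff_{s-1}$ and $\Ff_1$ are semistable of slopes $\mu_s=d_s/r_s$ and $\mu_1=d_1/r_1$, and
\[
\chi\bigl((\Ff_s/\Ff_{s-1})^\vee\otimes\Ff_1\bigr)=r_sr_1\bigl(\mu_1-\mu_s+1-g\bigr),
\]
so $h^0>0$, and therefore $\Hom\ne0$, as soon as $\mu_s<\mu_1+1-g$. To guarantee that strict inequality I would combine the two hypotheses of the statement. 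Since $\Phi\ne0$, inequality (\ref{min}) gives $\mu_s=\mu_-(\Ee)\le\mu_+(\Ee)+\gamma=\mu_1+\gamma$; and for a logarithmic tangent bundle on a curve $\gamma=2-2g-m\le2-2g$. As $g\ge2$ implies $2-2g<1-g$, we obtain $\mu_s\le\mu_1+2-2g<\mu_1+1-g$, which is exactly what is required.

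The main obstacle is concentrated entirely in this last step: the verification that $f$ lies in $\End(\Ee,\Phi)$ is formal once the two containments $\Ff_1\subseteq\Kk$ and $\Image(\Phi)\subseteq\Ff_{s-1}\otimes\Tt_{\Dd}$ are in hand, whereas the hypotheses enter only through the chain that pushes the slope difference $\mu_1-\mu_s$ strictly past the Riemann--Roch threshold $g-1$. It is precisely the interaction of the necessary condition $\mu_s\le\mu_1+\gamma$ with the curve-specific bound $\gamma\le2-2g$, sharpened by $g\ge2$, that does this; for $\gamma>0$ the same mechanism collapses, consistent with the opposite behaviour recorded just before the statement.
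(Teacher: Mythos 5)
Your proof is correct and takes essentially the same route as the paper's: the same nilpotent endomorphism $f:\Ee\to\Ee/\Ff_{s-1}\to\Ff_1\hookrightarrow\Ee$ built from a nonzero $\psi\in\Hom(\Ee/\Ff_{s-1},\Ff_1)$, whose compatibility with $\Phi$ follows from $\Ff_1\subseteq\mathrm{ker}(\Phi)$ and $\mathrm{Im}(\Phi)\subseteq\Ff_{s-1}\otimes\Tt_{\Dd}$ (Remark \ref{h03}), together with the same Riemann--Roch nonvanishing forced by $\mu_s\le\mu_1+\gamma\le\mu_1+2-2g$ when $g\ge 2$. One small point in your favor: you state the Riemann--Roch threshold correctly as $\mu_s<\mu_1+1-g$, whereas the paper's text writes $d_s/r_s<d_1/r_1+g-1$ (a sign typo); the substance of the two arguments is identical.
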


\begin{remark}
In our set-up, adding a nonzero map $\Phi$ to an unstable bundle $\Ee$ does not help enough to get
a semistable pair $(\Ee,\Phi )$; usually it is not simple, e.g. any endomorphism inducing $\Ff_s \rightarrow \Ff_1$ commutes with $\Phi$.
\end{remark}


\section{Higher dimensional case}\label{Sh}
In this section we consider the case when the dimension of $X$ is at least two. Note that a coherent sheaf $\Ee$ on $X$ is semistable if and only if $\mu_+(\Ee)=\mu_-(\Ee)$.

\subsection{Case of $\Tt_{\Dd}$ semistable}\label{sss}
We fix a polarization $\Oo _X(1)$ with respect to which we consider slope, stability and semistability. We assume that $\Tt_{\Dd}$ is semistable with $\mu (\Tt_{\Dd}) < 0$; in case $\mu (\Tt_{\Dd})\ge 0$, we would get that the framework would be the construction of stable or semistable co-Higgs or logarithmic co-Higgs bundles as in \cite{BH}. There are several manifolds $X$ with $T_X$ semistable, or equivalently with the semistable cotangent bundle; see \cite{PW}. 

Choose a pair $(\Ee ,\Phi)$ with $\Ee$ a torsion-free sheaf of rank $r$ and $\Phi : \Ee \rightarrow\Ee\otimes \Tt_{\Dd}$ with the Harder-Narasimhan filtration (\ref{hn}) of $\Ee$. Then the sheaf $\Ff _i/\Ff _{i-1}$ is a torsion-free semistable sheaf for all $i$ and $\mu (\Ff _i/\Ff _{i-1}) > \mu (\Ff _{i-1}/\Ff _{i-2})$ for every $i>1$. As in \S \ref{curve} on curve case, for fixed integers $r_i$ and $d_i$, we consider the set $\UU_X(s;r_1, d_1, \dots, r_s, d_s)$ of torsion-free sheaves of rank $r$ on $X$ with the Harder-Narasimhan filtration (\ref{hn}) with subquotient $\Ff_i/\Ff_{i-1}$ of ranks $r_i$ and degrees $d_i$ for $i=1,\dots, s$. 

Recall that in characteristic zero the tensor product of two semistable sheaves is still semistable by \cite[Theorem 2.5]{Maruyama}, So if $\Phi$ is not trivial, then we get $s\ge 2$ and so $\Ee$ is not semistable with the Harder-Narasimhan filtration (\ref{hn1}) for $\Ee\otimes \Tt_{\Dd}$. If $\Aa$ is a semistable torsion-free sheaf, then we have 
$$\mu (\Aa \otimes \Tt _{\Dd}) = \mu (\Aa )+\gamma.$$
Thus if $[\Ee] \in \UU_X (s;r_1,d_1;\dots ;r_s,d_s)$ and there is a nonzero map $\Phi :  \Ee \rightarrow \Ee\otimes \Tt_{\Dd}$, then we get $d_1/r_1+\gamma \ge d_s/r_s$; see Corollary \ref{aaa2}. Now let us use the same idea in Lemma \ref{aaa1}. Define 
$$\ell_2=\ell_2(\Ee):=\max_{1\le i \le s}\{~i~|~\mu (\Ff _i/\Ff_{i-1}) + \gamma \ge \mu (\Ff _s/\Ff _{s-1})\}$$ 
and then we have $\Phi(\Ee )\subset \Ff _{\ell_2}\otimes \Tt_{\Dd}$. From $\gamma <0$, we get $\ell_2 \le s-1$. On the other hand, letting  
$$\ell_1=\ell_1(\Ee):=\min_{1 \le j \le s}\{~j~|~\mu (\Ff _{\ell_2}/\Ff_{\ell_2-1}) + \gamma < \mu (\Ff _{j}/\Ff _{j-1})\},$$
the map $\Phi$ induces a nonzero map $\overline{\Phi}: \Ee /\Ff _{\ell_1} \to \Ff _{\ell_2}\otimes \Tt_{\Dd}$. In particular, if $\ell_1 \ge \ell_2$, e.g. $s=2$ or $d_2/r_2+\gamma<d_s/r_s$, which imply $\ell_2=1$, then any such map $\Phi$ is $2$-nilpotent.


In \cite[Section 2]{BH} we consider the following exact sequence for $r\ge 2$
\begin{equation}\label{eqb1}
0\to \Oo_X^{\oplus (r-1)} \to \Ee \to \Ii_Z \otimes \Aa \to 0,
\end{equation}
where $\Aa$ is a line bundle of $\deg \Aa<0$ with $h^0(\Tt_{\Dd}\otimes \Aa^\vee)\ge r-1$ and $Z\subset X$ is a locally complete intersection of codimension two. Under certain assumptions on $Z$, we may choose $\Ee$ to be reflexive or locally free. Then any $(r-1)$-dimensional linear subspace of $H^0(\Tt_{\Dd}\otimes \Aa^\vee)$ produces a nonzero $2$-nilpotent co-Higgs field defined by the following composition:
$$\Ee \to \Ii_Z\otimes \Aa \to  \Tt_{\Dd}^{\oplus (r-1)} \to \Ee\otimes \Tt_{\Dd}.$$



Assume now the existence of an endomorphism $v: \Ee \rightarrow \Ee$ such that $v'\circ \Phi = \Phi \circ v$, where $v': \Ee \otimes \Tt_{\Dd}\rightarrow \Ee \otimes \Tt_{\Dd}$ is the induces by $v$ and the identity map on $\Tt_{\Dd}$. Since we assume that $\deg \Aa <0$, (\ref{eqb1}) is the Harder-Narasimhan filtration of $\Ee$. We also assume that (\ref{eqb1}) does not split and so every automorphism of $\Ee$ is induced by an element of $H^0(\Aa ^\vee )^{\oplus (r-1)}$ and an $(r-1){\times} (r-1)$-matrix of constants acting on $\Oo _X^{\oplus (r-1)}$. Note that, if $r=2$, these assumptions imply $h^0(\mathcal{E}nd (\Ee )) = 1+ h^0(\Aa ^\vee )$. In this case, the co-Higgs field $\Phi$ is obtained by composing a map $\Phi_1: \Ii _Z\otimes \Aa \rightarrow \Tt_{\Dd}$ with a map $\Phi_2: \Tt_{\Dd}\rightarrow \Ee \otimes \Tt_{\Dd}$ induced by the inclusion in (\ref{eqb1}).


\subsection{Case of $\Tt_{\Dd}$ not semistable}\label{us}
In this subsection we assume that $\Tt_{\Dd}$ is not semistable so that it admits the Harder-Narasimhan filtration 
\begin{equation}\label{thn}
\{0\}=\Hh_0 \subset \Hh_1 \subset \cdots \subset \Hh_h=\Tt_{\Dd}
\end{equation} 
with $h\ge 2$. Assume further that $\mu_+(\Tt_{\Dd})=\mu (\Hh_1)<0$. Since $h\ge 2$, we have $\dim X\ge h\ge 2$. 

Fix a torsion-free sheaf $\Ee$ of rank $r$ and degree $d$ with Harder-Narasimhan filtration (\ref{hn}). We assume the existence of a nonzero logarithmic co-Higgs field $\Phi : \Ee \rightarrow \Ee \otimes \Tt_{\Dd}$. 

\begin{lemma}\label{n0}
If $\Ee$ is reflexive, then $\Ff _i$ is also reflexive for each $i$.
\end{lemma}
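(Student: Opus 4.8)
The plan is to derive reflexivity of each $\Ff_i$ from the single general fact that on the smooth variety $X$ a \emph{saturated} subsheaf of a reflexive sheaf is again reflexive, where a subsheaf $\Ff\subseteq\Ee$ is called saturated when the quotient $\Ee/\Ff$ is torsion-free. Recall that on a smooth variety reflexivity is equivalent to being torsion-free together with Serre's condition $S_2$, and that a torsion-free sheaf agrees with its double dual outside a closed subset of codimension at least two.

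First I would verify that each term $\Ff_i$ of the Harder-Narasimhan filtration (\ref{hn}) is saturated in $\Ee$. The successive quotients $\Ff_j/\Ff_{j-1}$ are semistable, hence torsion-free, and $\Ee/\Ff_i\cong\Ff_s/\Ff_i$ admits a filtration whose graded pieces are exactly the $\Ff_j/\Ff_{j-1}$ with $j>i$. Since an iterated extension of torsion-free sheaves is torsion-free, $\Ee/\Ff_i$ is torsion-free and so $\Ff_i$ is saturated for every $i$.

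Next, writing the short exact sequence $0\to\Ff_i\to\Ee\to\Ee/\Ff_i\to 0$ with $\Ee$ reflexive and $\Ee/\Ff_i$ torsion-free, I would compare $\Ff_i$ with $\Ff_i^{\vee\vee}$. As $\Ff_i$ is torsion-free, the canonical map $\Ff_i\to\Ff_i^{\vee\vee}$ is injective, and the inclusion $\Ff_i\hookrightarrow\Ee$ induces a map $\Ff_i^{\vee\vee}\to\Ee^{\vee\vee}\cong\Ee$ which is injective: it agrees with the original inclusion on the open locus $U$ where $\Ff_i$ is locally free (whose complement has codimension at least two), and its kernel, being a torsion-free subsheaf of $\Ff_i^{\vee\vee}$ supported on a proper closed set, must vanish. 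This exhibits $\Ff_i\subseteq\Ff_i^{\vee\vee}\subseteq\Ee$ as subsheaves of $\Ee$, so that $\Ff_i^{\vee\vee}/\Ff_i$ embeds into $\Ee/\Ff_i$. But $\Ff_i^{\vee\vee}/\Ff_i$ is supported on $X\setminus U$ and hence is a torsion sheaf, while $\Ee/\Ff_i$ is torsion-free; therefore the embedding is zero, $\Ff_i=\Ff_i^{\vee\vee}$, and $\Ff_i$ is reflexive.

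The argument is formal once the standard properties of the double dual are granted, and the only delicate point — the main obstacle — is the compatibility claim that $\Ff_i^{\vee\vee}$ sits inside $\Ee$ extending the inclusion of $\Ff_i$, i.e. that double-dualizing does not enlarge $\Ff_i$ beyond $\Ee$. This rests on functoriality of $(-)^{\vee\vee}$, its preservation of injections between torsion-free sheaves, and the fact that it alters a torsion-free sheaf only in codimension at least two; alternatively one may bypass it entirely by the depth estimate $\depth(\Ff_{i,x})\ge\min\{\depth(\Ee_x),\,\depth((\Ee/\Ff_i)_x)+1\}$ applied to the sequence above, which yields $\depth(\Ff_{i,x})\ge 2$ at every point $x$ of codimension at least two and hence the $S_2$ condition characterizing reflexivity.
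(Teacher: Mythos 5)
Your proof is correct, but your primary argument takes a genuinely different route from the paper's. The paper disposes of the lemma in three lines using precisely the depth estimate you relegate to a closing alternative: for $n=1$ the sheaves $\Ff_i$ are torsion-free on a curve, hence locally free; for $n\ge 2$ the sequence $0\to\Ff_i\to\Ee\to\Ee/\Ff_i\to 0$ together with $\depth(\Ff_{i,x})\ge\min\{\depth(\Ee_x),\,\depth((\Ee/\Ff_i)_x)+1\}$ yields depth at least two for $\Ff_i$, since $\Ee$ is reflexive and $\Ee/\Ff_i$ is torsion-free. Your main argument instead runs through the double dual: you embed $\Ff_i^{\vee\vee}$ into $\Ee$ compatibly with the inclusion of $\Ff_i$, observe that $\Ff_i^{\vee\vee}/\Ff_i$ is a torsion sheaf sitting inside the torsion-free quotient $\Ee/\Ff_i$, and conclude $\Ff_i=\Ff_i^{\vee\vee}$. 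This is sound, and what it buys is a more elementary, self-contained proof: it needs only functoriality of $(-)^{\vee\vee}$, injectivity on torsion-free sheaves, and the codimension-two behavior of the double dual, rather than the depth/$S_2$ characterization of reflexivity on a smooth variety; the price is length. A further merit of your write-up is that you verify the saturation of the $\Ff_i$ (that $\Ee/\Ff_i$ is torsion-free, being an iterated extension of the semistable, hence torsion-free, graded pieces of the Harder--Narasimhan filtration), a point the paper uses silently when it asserts that $\Ee/\Ff_i$ "has no torsion". Either argument would serve as a complete proof.
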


\begin{proof}
In case $n=1$, the sheaf $\Ff _i$ in (\ref{hn}) is locally free and in particular reflexive. Now assume $n\ge 2$ and then we need to prove that $\Ff _i$ has depth at least two. This is true, because
$\Ee$ has depth at least two and $\Ee/\Ff _i$ has no torsion and so it has positive depth.
\end{proof}

\begin{remark}
Lemma \ref{n0} works for arbitrary $\Tt_{\Dd}$, even in the case $n=1$. 
\end{remark}

\begin{lemma}\label{n1}
We have $\Ff_1 \subseteq \mathrm{ker}(\Phi )$ and $s\ge 2$. 
\end{lemma}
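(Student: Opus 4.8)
The plan is to pit the maximal slope $\mu_+(\Ee)=\mu(\Ff_1)$ against the strict negativity of $\mu_+(\Tt_{\Dd})$, using the slope inequality of Lemma \ref{aaa1} together with the tensor formula of Remark \ref{aaa0}.

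First I would prove the inclusion $\Ff_1\subseteq \mathrm{ker}(\Phi)$. Restricting $\Phi$ to $\Ff_1$ gives a map $\Phi|_{\Ff_1}:\Ff_1\to \Ee\otimes \Tt_{\Dd}$, and I argue by contradiction that it is nonzero. Since $\Ff_1$ is the first step of the Harder-Narasimhan filtration (\ref{hn}) of $\Ee$, it is semistable and $\mu_-(\Ff_1)=\mu(\Ff_1)=\mu_+(\Ee)$. Applying Lemma \ref{aaa1} to the nonzero map $\Phi|_{\Ff_1}$ then yields $\mu_+(\Ee)=\mu_-(\Ff_1)\le \mu_+(\Ee\otimes \Tt_{\Dd})$. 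Because $\Tt_{\Dd}$ is locally free, $\Ee\otimes \Tt_{\Dd}$ is already torsion-free, so it agrees with $\Ee\,\overline{\otimes}\,\Tt_{\Dd}$ and Remark \ref{aaa0} applies verbatim (note its slope identity does not require $\Tt_{\Dd}$ to be semistable, only an application to the top Harder-Narasimhan pieces), giving $\mu_+(\Ee\otimes \Tt_{\Dd})=\mu_+(\Ee)+\mu_+(\Tt_{\Dd})$. Combining the two displays forces $\mu_+(\Tt_{\Dd})\ge 0$, contradicting the standing assumption $\mu_+(\Tt_{\Dd})=\mu(\Hh_1)<0$ of this subsection. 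Hence $\Phi|_{\Ff_1}=0$, which is exactly $\Ff_1\subseteq \mathrm{ker}(\Phi)$.

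For the inequality $s\ge 2$ I would argue again by contradiction: if $s=1$ then $\Ee=\Ff_1$, so the inclusion just established reads $\Ee\subseteq \mathrm{ker}(\Phi)$, forcing $\Phi=0$ and contradicting the hypothesis that the co-Higgs field is nonzero. I do not expect a genuine obstacle in this argument; the only points requiring care are verifying that $\Ee\otimes \Tt_{\Dd}$ is torsion-free (so that Remark \ref{aaa0} may be invoked without first passing to a torsion-free quotient) and reading off $\mu_-(\Ff_1)=\mu_+(\Ee)$ correctly from the definition of the Harder-Narasimhan filtration.
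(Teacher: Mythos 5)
Your proof is correct and takes essentially the same route as the paper's: both argue by contradiction that a nonzero $\Phi|_{\Ff_1}$ is impossible because $\mu_+(\Tt_{\Dd})<0$ forces every Harder--Narasimhan slope of the target to lie strictly below $\mu(\Ff_1)=\mu_-(\Ff_1)$, using Lemma \ref{aaa1} together with Maruyama's tensor-semistability theorem (packaged as Remark \ref{aaa0}), and both then get $s\ge 2$ since otherwise $\Ee=\Ff_1\subseteq \mathrm{ker}(\Phi)$ would force $\Phi=0$. The only cosmetic difference is that the paper funnels the argument through the induced nonzero map $\Ff_1\rightarrow (\Ff_{i_0}/\Ff_{i_0-1})\otimes \Tt_{\Dd}$ for the minimal $i_0$ with $\Phi(\Ff_1)\subseteq \Ff_{i_0}\otimes\Tt_{\Dd}$, while you apply the additivity $\mu_+(\Ee\otimes\Tt_{\Dd})=\mu_+(\Ee)+\mu_+(\Tt_{\Dd})$ to the whole target at once.
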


\begin{proof}
Assume $\Phi (\Ff _1)\ne 0$ and let $i_0$ be the minimal integer $i\in \{1,\dots ,s\}$ such that $\Phi (\Ff _1) \subseteq \Ff _i\otimes \Tt_{\Dd}$. By the definition of $i_0$, the map $\Phi$ induces a nonzero map $\phi : \Ff _1 \rightarrow (\Ff _{i_0}/\Ff _{i_0-1})\otimes \Tt_{\Dd}$. Since the tensor product of two semistable sheaves, modulo its torsion, is again semistable by \cite[Theorem 2.5]{Maruyama} and $\mu (\Hh_1)<0$, the sheaf $gr((\Ff _{i_0}/\Ff _{i_0-1})\otimes \Tt_{\Dd} )$ given by the Harder-Narasimhan filtration of $\Tt_{\Dd}$ has all its factors with slope less than $\mu (\Ff _1)$. Thus we get $\Phi =0$, a contradiction.

Now $\Phi$ is a nonzero map with $\mathrm{ker}(\Phi )\supseteq \Ff _1$ and so we have $s\ge 2$.
\end{proof}

\begin{remark}\label{e1}
By Lemma \ref{n0}, the pair $(\Ff _1,0)$ is a logarithmic co-Higgs subsheaf of $(\Ee ,\Phi )$ and so $(\Ee ,\Phi )$ is not semistable. In particular, $\Ee$ is also not semistable. 
\end{remark}


\subsubsection{Rank $2$ case}
In this subsection we consider the co-Higgs sheaves $(\Ee, \Phi)$ with $\Ee$ reflexive of rank two and $\Phi$ nonzero. 

\begin{lemma}\label{n2}
If $\Ee$ is reflexive of rank two, then $\Phi$ is $2$-nilpotent.
\end{lemma}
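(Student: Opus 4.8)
The plan is to reduce $2$-nilpotency, which means $\widetilde{\Phi}\circ\Phi=0$, to the single inclusion $\mathrm{Im}(\Phi)\subseteq \Ff _1\otimes\Tt_{\Dd}$, and then to establish that inclusion by a slope argument exploiting the standing assumption $\mu _+(\Tt_{\Dd})=\mu (\Hh _1)<0$. First I would record the consequences of $\mathrm{rk}(\Ee )=2$: by Lemma \ref{n1} we have $s\ge 2$ and $\Ff _1\subseteq \mathrm{ker}(\Phi )$, and since the ranks $r_i$ of the Harder-Narasimhan subquotients are positive and sum to $2$, this forces $s=2$ and $r_1=r_2=1$, so $\Ff _1$ and $\Ee /\Ff _1$ are torsion-free of rank one. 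Because $\Ff _1\subseteq \mathrm{ker}(\Phi )$, the map $\widetilde{\Phi }=\Phi \otimes \mathrm{id}_{\Tt_{\Dd}}$ vanishes on $\Ff _1\otimes \Tt_{\Dd}$; hence $\widetilde{\Phi }\circ \Phi =0$ holds as soon as $\mathrm{Im}(\Phi )\subseteq \Ff _1\otimes \Tt_{\Dd}$, and it suffices to prove this inclusion.

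The key step is to show that the composite
$$\Ee \xrightarrow{\ \Phi\ } \Ee \otimes \Tt_{\Dd} \longrightarrow (\Ee /\Ff _1)\otimes \Tt_{\Dd},$$
whose second arrow is induced by the quotient $\Ee \to \Ee /\Ff _1$, is zero; this is precisely the inclusion $\mathrm{Im}(\Phi )\subseteq \Ff _1\otimes \Tt_{\Dd}$. Since $\Phi (\Ff _1)=0$ by Lemma \ref{n1}, this composite factors through $\Ee /\Ff _1$ and yields a map $\overline{\Phi }\colon \Ee /\Ff _1\to (\Ee /\Ff _1)\otimes \Tt_{\Dd}$; I would argue $\overline{\Phi }=0$. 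Note $\Ee /\Ff _1$ is torsion-free because $\Ff _1$ is saturated, and $(\Ee /\Ff _1)\otimes \Tt_{\Dd}$ is torsion-free because $\Tt_{\Dd}$ is locally free, so Lemma \ref{aaa1} is applicable.

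The slope computation then finishes the argument. As $\Ee /\Ff _1$ has rank one it is semistable, so $\mu _-(\Ee /\Ff _1)=\mu _+(\Ee /\Ff _1)=\mu (\Ee /\Ff _1)$; and since $\Tt_{\Dd}$ is locally free, Remark \ref{aaa0} gives $\mu _+\bigl((\Ee /\Ff _1)\otimes \Tt_{\Dd}\bigr)=\mu (\Ee /\Ff _1)+\mu _+(\Tt_{\Dd})$. If $\overline{\Phi }$ were nonzero, Lemma \ref{aaa1} would force $\mu (\Ee /\Ff _1)\le \mu (\Ee /\Ff _1)+\mu _+(\Tt_{\Dd})$, i.e. $\mu _+(\Tt_{\Dd})\ge 0$, contradicting $\mu _+(\Tt_{\Dd})=\mu (\Hh _1)<0$. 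Hence $\overline{\Phi }=0$, giving $\mathrm{Im}(\Phi )\subseteq \Ff _1\otimes \Tt_{\Dd}$ and therefore $\widetilde{\Phi }\circ \Phi =0$.

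I expect the only delicate point to be the bookkeeping that makes Lemma \ref{aaa1} applicable --- verifying torsion-freeness of the source and target and using that $\Ee /\Ff _1$ has rank one, so that $\mu _-$ and $\mu _+$ coincide there. This rank-one feature is exactly where $\mathrm{rk}(\Ee )=2$ is used: for higher rank $\Ee /\Ff _1$ need not be semistable, and the same inequality would then fail to be contradictory.
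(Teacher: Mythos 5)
Your proof is correct and follows essentially the same route as the paper's: reduce $2$-nilpotency to the vanishing of the induced map $\Ee/\Ff_1 \to (\Ee/\Ff_1)\otimes\Tt_{\Dd}$, then kill that map by comparing slopes, using that $\Ee/\Ff_1$ is torsion-free of rank one (hence semistable) and $\mu_+(\Tt_{\Dd})<0$. The only cosmetic difference is that you invoke Lemma \ref{aaa1} and Remark \ref{aaa0} where the paper re-runs the underlying Maruyama semistability argument from the proof of Lemma \ref{n1}, and you correctly omit the reflexivity/line-bundle structure of $\Ff_1$, which the paper records for later use but does not need for the nilpotency claim itself.
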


\begin{proof}
Since $\Phi $ is nonzero, the sheaf $\Ff_1$ has rank one by Lemma \ref{n1}. Since $\Ff _1$ is reflexive on a smooth variety $X$ by Lemma \ref{n0}, it is a line bundle by \cite[Proposition 1.9]{Hartshorne1}. Now we get that $\Ee /\Ff _1\cong \Ii _Z\otimes \Aa$ for some line bundle $\Aa$ and some closed subscheme $Z\subset X$ with $\dim Z\le n-2$. By definition of Harder-Narasimhan filtration, we have $\deg \Aa <\deg \Ff _1$. Let $\psi : \Ee \rightarrow (\Ee /\Ff _1)\otimes \Tt_{\Dd}$ be the map induced by $\Phi$. Since $\mathrm{ker}(\Phi )\supseteq \Ff _1$ by Lemma \ref{n1}, it is sufficient to prove
that $\Phi (\Ee)\subseteq \Ff _1\otimes \Tt_{\Dd}$, i.e. $\psi =0$. Note that $\psi$ induces a map $\widetilde{\psi} :(\Ee /\Ff _1) \rightarrow (\Ee /\Ff _1)\otimes \Tt_{\Dd}$ with $\mathrm{Im}(\psi) =\mathrm{Im}(\widetilde{\psi})$, due to $\Ff_1 \subseteq\mathrm{ker}(\Phi )$. Since $(\Ee /\Ff _1)$ has rank one and it is torsion-free, it is semistable. Again as in the proof of Lemma \ref{n1}, since $\mu(\Hh_1) <0$ and the tensor product of two semistable sheaves, modulo its torsion, is semistable by \cite[Theorem 2.5]{Maruyama}, we have $\mu ((\Ee /\Ff _1)\otimes \Hh_1)< \mu (\Ee /\Ff _1)$ and so $\widetilde{\psi}=0$. Thus we have $\psi =0$.
\end{proof}

Now we describe all pairs $(\Ee ,\Phi)$ with $\Ee$ reflexive of rank two and $\Phi$ nonzero. By Lemma \ref{n1} and assumption that $\Phi$ is nonzero, the sheaf $\Ee$ is not semistable and $s=2$. By Lemmas \ref{n0},  \ref{n1}, \ref{n2} and \cite[Proposition 1.9]{Hartshorne1}, the map $\Phi$ is $2$-nilpotent and it fits into an exact sequence
$$0 \to \Ff_1 \to \Ee \to \Ii _Z\otimes \det (\Ee)\otimes \Ff_1^\vee \to 0,$$
with $Z$ a closed subscheme of $X$ with either $Z=\emptyset$ or $\dim Z=n-2$. Moreover, $\Phi$ is uniquely determined by a map $u: \det (\Ee)\otimes \Ff_1^\vee  \rightarrow \Ff_1 \otimes \Tt_{\Dd}$. Thus the set of all logarithmic co-Higgs structures on $\Ee$ is parametrized by 
$$V(\Ee):=H^0(\det (\Ee)^\vee \otimes \Ff_1^{\otimes 2} \otimes \Tt_{\Dd}).$$
The trivial element $0\in V(\Ee)$ corresponds to the trivial co-Higgs field $\Phi =0$. Note that $\Phi =0$ also exists for stable sheaves.

Now we reverse the construction. Fix two line bundles $\Ll_1$ and $\Ll_2$ on $X$ with $\deg \Ll_1 > \deg \Ll_2$ and a closed subscheme $Z\subset X$ such that a general extension
\begin{equation}\label{eqm1}
0 \to \Ll_1 \to \Ee \to \Ii _Z\otimes \Ll_2 \to 0
\end{equation}
is reflexive. We just observed that any co-Higgs field $\Phi : \Ee \rightarrow \Ee\otimes \Tt_{\Dd}$ is $2$-nilpotent and that $\Hom (\Ee ,\Ee \otimes \Tt_{\Dd}) \cong H^0(\Ll_1 \otimes \Ll_2 ^\vee \otimes \Tt_{\Dd})$. We may see \cite[Theorem 4.1]{Hartshorne1} for a description about the conditions on $\Ll_1$, $ \Ll_2$, $\omega _X$ and $Z$ assuring the existence of a reflexive sheaf fitting in (\ref{eqm1}) when $n=3$. Since (\ref{eqm1}) is the Harder-Narasimhan filtration of any $\Ee$ fitting into (\ref{eqm1}), so the family of the co-Higgs sheaves $(\Ee, \Phi)$ with $gr (\Ee)=\Ll_1\oplus (\Ii_Z\otimes \Ll_2)$ is parametrized by a fibration over $\PP \Ext_X^1(\Ii_Z\otimes \Ll_2, \Ll_1)$ whose fibre over $[\Ee]$ is $H^0(\Ll_1\otimes \Ll_2^\vee \otimes \Tt_{\Dd})$. 

\begin{remark}\label{s2}
Assume $s=2$ and $\mu_+(\Tt_{\Dd}) <0$. For a torsion-free coherent sheaf $\Ee$ of rank at least $2$, as in the proof of Lemma \ref{n2} we see that every logarithmic co-Higgs field $\Phi : \Ee \rightarrow \Ee \otimes \Tt_{\Dd}$ is integrable and $2$-nilpotent with
$$\Hom (\Ee,\Ee \otimes \Tt_{\Dd}) \cong \Hom (\Ee /\Ff _1,\Ff _1\otimes \Tt_{\Dd}),$$
where $\Ff _1$ is semistable, and $\Ee /\Ff _1$ is torsion-free and semistable. Recall that if $\Ee$ is reflexive, then so is $\Ff _1$ by Lemma \ref{n0}. Take any exact sequence
\begin{equation}\label{eqr1}
0 \to \Ff _1\to \Gg \to \Ee /\Ff _1\to 0.
\end{equation}
Any such extension in (\ref{eqr1}) is torsion-free. For any fixed $\Gg$ fitting into (\ref{eqr1}), not necessarily reflexive, the proof of Lemma \ref{n2} shows that every logarithmic co-Higgs field $\Phi : \Gg \rightarrow \Gg \otimes \Tt_{\Dd}$ is integrable and $2$-nilpotent with $\Hom (\Gg,
\Gg \otimes \Tt_{\Dd}) \cong \Hom (\Ee /\Ff _1,\Ff _1\otimes \Tt_{\Dd})$.
\end{remark}


\subsubsection{Rank $3$ case} 
We assume $r=3$ and that $\Ee$ is reflexive. Since we assume $\mu _+(\Tt_{\Dd}) <0$, we get $s\ge 2$ by Lemma \ref{n1} and so $s\in \{2,3\}$. 

\begin{remark}\label{r1}
The case $s=2$ is dealt in Remark \ref{s2}. In this case, the sheaf $\Ff_1$ is either a line bundle or a semistable reflexive sheaf of rank two with $\Ee/\Ff _1\cong \Ii _Z\otimes \Aa$ for some line bundle $\Aa$ and a closed subscheme $Z\subset X$ with $\dim Z\le n-2$. In both cases, we may apply Remark \ref{p1}. 
\end{remark}

From now on we assume $s=3$ and so the sheaf $\Ff _i$ in (\ref{hn}) has rank $i$ for each $i$. By Lemma \ref{n0}, the sheaf $\Ff_1$ is a line bundle and $\Ff _2$ is reflexive so that $\Ff _2/\Ff _1\cong \Ii _{Z_1}\otimes \Aa _1$ and $\Ee/\Ff _2\cong \Ii _{Z_2}\otimes \Aa _2$ with $\Aa _1,\Aa_2$ line bundles and $Z_1, Z_2$ closed subschemes of $X$ with dimension at most $n-2$. Here we have $\deg \Ff _1 > \deg \Aa _1 > \deg \Aa _2$. Set 
$$\delta(\Ee):=\mu_-(\Ff_2)-\mu_+(\Ee\otimes \Tt_{\Dd}),$$ 
where $\mu _-(\Ff _2) = \deg \Aa _1$ and $\mu _+(\Ee \otimes \Tt_{\Dd})= \deg \Ff _1+\mu _+(\Tt_{\Dd})$

\quad (a) Assume $\delta(\Ee)>0$ and then we have $\Phi _{|\Ff _2} =0$, i.e. $\Phi$ is uniquely induced by a map $u_1: \Ii _{Z_2}\otimes \Aa _2\rightarrow \Ee \otimes \Tt_{\Dd}$. Since $\Ee/\Ff_2 \cong \Ii_{Z_2}\otimes \Aa_2$ is of rank one and $\mu _+(\Tt_{\Dd}) < 0$, the composition of $u_1$ with the quotient map $\Ee \otimes \Tt_{\Dd}\rightarrow (\Ee /\Ff _2) \otimes \Tt_{\Dd}$ is trivial, implying $\mathrm{Im}(u_1)\subseteq \Ff _2\otimes \Tt_{\Dd}$. Thus $\Phi$ is uniquely determined by a map $u: \Ii _{Z_2}\otimes \Aa _2\rightarrow \Ff_2 \otimes \Tt_{\Dd}$. Conversely, any map $u: \Ii _{Z_2}\otimes \Aa _2\rightarrow \Ff_2 \otimes \Tt_{\Dd}$ induces a $2$-nilpotent logarithmic co-Higgs field on $\Ee$ by taking the composition $u\circ \pi$, where $\pi : \Ee \rightarrow \Ee /\Ff _2$ is the quotient map.

\quad (b) Assume now $\delta(\Ee)\le 0$. Set $\Bb := \mathrm{Im}(\Phi _{|\Ff _2})$ and $\Gg := \mathrm{Im}(\Phi )$. Since we have
$$\mu _+((\Ee /\Ff _2)\otimes \Tt_{\Dd}) = \mu (\Ee /\Ff _2) +\mu _+(\Tt_{\Dd}) < \mu (\Ee /\Ff _2),$$
the composition of $\Phi$ with the quotient map $\Ee \otimes \Tt_{\Dd} \rightarrow (\Ee /\Ff _2) \otimes \Tt_{\Dd}$ is trivial and so we have $\Gg \subseteq \Ff _2\otimes \Tt_{\Dd}$. If $\Bb $ is trivial, then we may apply part (a), i.e. $\Phi$ is $2$-nilpotent and it is uniquely induced by $u: \Ii _{Z_2}\otimes \Aa _2\rightarrow \Ff_2 \otimes \Tt_{\Dd}$. Now we assume that $\Bb$ is not trivial. Since $\Phi (\Ff _1) =0$ and $\Ff _2/\Ff _1$ is a torsion-free sheaf of rank one, we have $\Bb \cong \Ff _2/\Ff _1$ and so $\mathrm{rk}(\Gg)\in \{1,2\}$. Note that we have $\Bb \subseteq \Ff _1\otimes \Tt_{\Dd}$ from $\mu _+(\Ee /\Ff _2)+\mu _+(\Tt_{\Dd})  < \mu _+(\Ee/\Ff _2)$. 

 \quad (b-i) First assume $\mathrm{rk} (\Gg) =1$ and then $\Bb$ is a subsheaf of $\Gg$ with the same rank. Since $\Ff _1\otimes \Tt_{\Dd}$ is a saturated subsheaf of $\Ff _2\otimes \Tt_{\Dd}$, we have $\Gg \subseteq \Ff _1\otimes \Tt_{\Dd}$. Thus $\Phi$ is uniquely determined by a map $\Ee /\Ff _1\rightarrow \Ff _1\otimes \Tt_{\Dd}$, i.e. by an element of $H^0(\Tt_{\Dd}\otimes \Ff _1 \otimes \Aa ^\vee )$; the converse also holds, but we cannot guarantee the integrability of the associated logarithmic co-Higgs field. 
 
 \quad (b-ii) Now assume $\mathrm{rk}(\Gg) =2$. Since we have $\Gg=\psi (\Ee /\Ff _1)$ for the map $\psi : \Ee/\Ff _1 \rightarrow \Ee \otimes \Tt_{\Dd}$, the map $\psi$ is injective as a map of sheaves and $\Gg \cong \Ee /\Ff _1$. In this case we also have $\Ff _1 =\mathrm{ker}(\Phi )$. We get that $\Ee$ is a reflexive sheaf fitting into an exact sequence
\begin{equation}\label{eqr2}
0\to \Ff _1\to \Ee \stackrel{f}{\to} \Gg \to 0
\end{equation}
with $\Ff _1$ a line bundle and $\Gg$ a torsion-free unstable sheaf of rank two with $\deg \Ff _1>\mu _+(\Gg)$. The map $\Phi$ is determined by a unique injective map $v: \Gg \rightarrow \Ff _2\otimes \Tt_{\Dd}$. Conversely, set $\Gg _1 \subset \Gg$ to be the Harder-Narasimhan filtration of $\Gg$ and $\Ff _2 = f^{-1}(\Gg _1)$, where $f$ is the surjection in (\ref{eqr2}). Then the composition of the quotient map $\Ee \rightarrow \Ee /\Ff _1$ with an injective map $\Gg \rightarrow \Ff _2\otimes \Tt_{\Dd}$ induces a logarithmic co-Higgs field $\Phi$ with the given data $(\Ff _1, \Ff _2, \Gg)$, which does not necessarily satisfy the integrability condition. Note that if $\Gg \subset \Ff _1\otimes \Tt_{\Dd}$, i.e. $\Phi$ comes from an injective map $\Gg \rightarrow \Ff _1\otimes \Tt_{\Dd}$, then $\Phi$ is $2$-nilpotent and so it is integrable.

\begin{example}
Assume that $\Tt _{\Dd}$ is not semistable with Harder-Narasimhan filtration (\ref{thn}) and set $\mu _2(\Tt _{\Dd}):= \mu (\Hh _2/\Hh _1)$. Let $\Ee$ be a torsion-free sheaf of rank $r$ with (\ref{hn}) as its Harder-Narasimhan filtration and assume $\mu_+(\Ee)-\mu_-(\Ee) < \mu _2(\Tt_{\Dd} )$. In this case, for any map $\Phi : \Ee \rightarrow \Ee \otimes \Tt_{\Dd}$, the sheaf $\mathrm{Im}(\Phi)$ is contained in the subsheaf $\overline{\Ee\otimes \Hh_1}$ of $\Ee\otimes \Tt _{\Dd}$, which is the image of the natural map $\Ee \otimes \Hh _1\rightarrow  \Ee \otimes \Tt _{\Dd}$. We have $\overline{\Ee\otimes \Hh _1} \cong \Ee \otimes \Hh _1$ if either $\Ee$ or $\Hh_1$ is locally free. Note that $\Hh _1$ is locally free, if it has rank one, because $\Hh _1$ is reflexive and $X$ is smooth; see \cite[Proposition 1.9]{Hartshorne1}. In particular, if $n=2$, then $\Hh _1$ is a line bundle and $\mu _2(\Tt _{\Dd}) =\mu _-(\Tt _{\Dd})$. Thus under these assumptions we may repeat the observations given in the case $\Tt _{\Dd}$ semistable using $\Hh _1$ instead of $\Tt _{\Dd}$. Without any assumption on $\mu _2(\Tt _{\Dd})$ we may see at least a part of the logarithmic co-Higgs fields of $\Ee$ in this way.
\end{example}

\providecommand{\bysame}{\leavevmode\hbox to3em{\hrulefill}\thinspace}
\providecommand{\MR}{\relax\ifhmode\unskip\space\fi MR }
\providecommand{\MRhref}[2]{%
  \href{http://www.ams.org/mathscinet-getitem?mr=#1}{#2}
}
\providecommand{\href}[2]{#2}

\end{document}